\documentclass[11pt,leqno]{amsart}
\usepackage[T1]{fontenc}
\usepackage{lmodern}
\usepackage{mathtools, amsfonts, delarray, amssymb, amsmath, amsthm, a4, a4wide}
\usepackage{esint, cite}

\usepackage{mathrsfs}
\usepackage{comment}
\usepackage{graphicx}
\usepackage{latexsym}
\usepackage{epsfig}
\usepackage{color}
\usepackage{microtype}
\usepackage[colorlinks=true,linkcolor=blue,citecolor=blue,urlcolor=blue]{hyperref}

\mathtoolsset{showonlyrefs}
 
\numberwithin{equation}{section}
\newtheorem{theorem}{Theorem}[section]
\newtheorem{proposition}[theorem]{Proposition}
\newtheorem{lemma}[theorem]{Lemma}
\newtheorem{defn}[theorem]{Definition}
\newtheorem{corollary}[theorem]{Corollary}
\theoremstyle{remark}
\newtheorem{remark}[theorem]{Remark}

\newcommand{\e}{\varepsilon}
\newcommand{\p}{\partial}
\newcommand{\R}{\mathbb R}
\newcommand{\F}{\mathbf F}
\newcommand{\dd}{\,\mathrm{d}x}
\newcommand{\dH}{\,\mathrm{d}\mathcal H^{n-1}}
\newcommand{\osc}{\operatorname{osc}}
\newcommand{\diam}{\operatorname{diam}}

\newcommand{\cof}{\operatorname{cof}}
\newcommand{\diver}{\operatorname{div}}
\newcommand{\loc}{\mathrm{loc}}
\newcommand{\norm}[1]{\left\lVert#1\right\rVert}

\allowdisplaybreaks[1]
\title[Linearized Monge--Amp\`ere equations in divergence form with applications ]{Global H\"older estimates for linearized Monge--Amp\`ere equations in divergence form with applications to dual semigeostrophic equations and periodic homogenization}
\author{Guoqing Cui}
\address{School of Mathematical Sciences, Peking University, Beijing 100871, China}
\email{gqcui25@stu.pku.edu.cn}
\author{Chong Gu}
\address{Department of Mathematics, Indiana University,
Bloomington, IN 47405, USA}
\email{chongu@iu.edu}
\author{Nam Q. Le}
\address{Department of Mathematics, Indiana University,
Bloomington, IN 47405, USA}
\email{nqle@iu.edu}

\author{Ling Wang}
\address{Department of Decision Sciences and BIDSA, Bocconi University, Milano, Italy}
\email{ling.wang@unibocconi.it}

\author{Bin Zhou}
\address{School of Mathematical Sciences, Peking University, Beijing 100871, China}
\email{bzhou@pku.edu.cn}

\thanks{Guoqing Cui and Bin Zhou were  partially supported by  National Key R$\&$D Program of China 2020YFA0712800, 2023YFA009900 and NSFC  Grant 12271008. 
Chong Gu and Nam Q. Le were supported in part by the National Science Foundation under grant DMS-2452320. 
 Ling Wang was funded by the European Union: the European Research Council (ERC), through StG “ANGEVA”, project number: 101076411. Views and opinions expressed are however those of the authors only and do not necessarily reflect those of the European Union or the European Research Council. Neither the European Union nor the granting authority can be held responsible for them.}
\date{}
\subjclass[2020]{35B45, 35J08, 35J70, 35J96, 35Q35}
\keywords{Linearized Monge--Amp\`ere equation, divergence form equation, Green's function, H\"older estimates, sections, dual semigeostrophic equations, periodic homogenization}

\begin{document}
\begin{abstract}
We establish interior and global H\"older estimates for linearized Monge--Amp\`ere equations in divergence form in all dimensions $n\geq 3$, when the Hessian determinant of the convex Monge--Amp\`ere potential is bounded above and below by positive constants and the vector field on the right-hand side is bounded. The estimates use the $L^p$ norm of the solution for any $p>1$. A key ingredient is an $L^1$ estimate, with appropriate decay rates, for the gradient of the Green's function of the linearized Monge--Amp\`ere operator in small sections, uniform with respect to the pole. As applications, we obtain uniform H\"older estimates for the time derivatives of the primal and dual potentials in the three-dimensional periodic dual semigeostrophic system when the initial density is bounded away from zero and infinity, and establish a linear convergence rate for periodic homogenization of the Monge--Amp\`ere equation.
\end{abstract}
\maketitle

\section{Introduction and statement of the main results}\label{sec:intro}

In this paper, we establish interior and global H\"older estimates for linearized Monge--Amp\`ere equations in divergence form 
in all dimensions $n\geq3$ where the convex Monge--Amp\`ere potential
has Hessian determinant  bounded between two
positive constants and the vector field on the right-hand side is bounded.  We apply our results to obtain uniform H\"older estimates for the time
derivatives of the primal and dual potentials in the three-dimensional
periodic dual semigeostrophic system when the initial density is bounded
away from zero and infinity, and to establish a linear convergence rate in periodic homogenization for the Monge--Amp\`ere equation in all dimensions.

\medskip

Let $\Omega\subset\R^n$ be a convex domain, and let $u\in C^3(\Omega)$ be
convex with
\begin{equation}\label{eq:det}
 0<\lambda\leq\det D^2u\leq\Lambda\qquad\text{in }\Omega.
\end{equation}
Throughout, the cofactor matrix of the Hessian matrix $D^2 u= (D_{ij}u)_{1\leq i, j\leq n}$ is denoted by \[U=(U^{ij})_{1\leq i, j\leq n}=\cof D^2u=(\det D^2u)(D^2u)^{-1}.\] 
It is positive
definite and divergence-free; see \cite[Lemma~2.56]{Le24}. The cofactor matrix comes from linearizing the Monge--Amp\`ere operator: \[U=\frac{\p\det D^2 u}{\p (D^2 u)}.\]

\medskip

 We are interested in obtaining interior and global H\"older estimates for the linearized Monge--Amp\`ere equations in divergence form 
\begin{equation}\label{eq:pde}
 L_uv=\diver\F
\end{equation}
with a bounded vector field $\F$, where $L_u$ is the linearized Monge--Amp\`ere operator associated with the convex Monge--Amp\`ere potential $u$:
\[
 L_uv:=\diver(U Dv)=U^{ij}D_{ij}v.
\]

This type of equation appears in singular Abreu's equation \cite{KLWZ, Le_CPAM, Le23, LZ} and semigeostrophic equations in meteorology \cite{ACDF, ACDF14, BB, Cu06, CF, CuG, CNP, Le18, Lo05, LoN} to be described in the next subsection. For more information on the linearized Monge--Amp\`ere equations, we refer to \cite{Le24} and \cite{GL26} and the references therein.

\subsection{H\"older estimates for linearized Monge--Amp\`ere equations in divergence form}

Caffarelli and Guti\'errez \cite{CG97} proved a fundamental interior Harnack inequality for the homogeneous linearized Monge--Amp\`ere equation \eqref{eq:pde} with $\F=0$
 under \eqref{eq:det}. Their result is an affine invariant version of the classical Harnack inequality of Moser, and Krylov and Safonov for linear, uniformly elliptic equations in divergence form and nondivergence form, respectively. 
 Due to the affine invariance of the linearized Monge--Amp\`ere equations, Euclidean balls were replaced by sections of the Monge--Amp\`ere potential function in \cite{CG97}.
Sections are sublevel sets of convex functions after subtracting their supporting hyperplanes.

\medskip
 For \eqref{eq:pde} with a bounded vector field $\F$, Le \cite{Le18} established an interior H\"older estimate in dimension
two. The higher-dimensional result when $n\geq 3$ in \cite[Theorem~15.6]{Le24} assumes
an additional integrability condition on the Hessian $D^2 u$; Le asked in \cite[Remark 15.10]{Le24} if this additional condition can be removed. Wang
\cite[Theorem~1.5]{Wang25} obtained an interior H\"older estimate  for \eqref{eq:pde} under
$(D^2u)^{1/2}\F\in L^q$, $q>n$, and asked in
\cite[Remark~1.6(2)]{Wang25} whether boundedness of $\F$ alone suffices.
Related estimates for linearized Monge--Amp\`ere equations with drifts were obtained by Kim \cite{Kim25}.

\medskip

Cui, Wang, and Zhou \cite{CWZ25} developed interior H\"older estimates for $L_u v=\mu$ with measures $\mu$ having
controlled growth on sections. Their application to bounded vector fields
requires the divergence to have one sign; see their Theorem~1.4 and
Remark~1.5. Gu and Le \cite{GL26} used estimates for the
Monge--Amp\`ere gradient $(D^2u)^{-1/2}D_xg$ of the Green's function of the linearized Monge--Amp\`ere operator $L_u$ to give a unified, different proof of the interior H\"older estimates in \cite{CWZ25, Wang25}. They also obtained 
a global version.  Here, we treat arbitrary bounded vector fields in all dimensions $n\geq 3$.

\medskip

Our first result is concerned with interior H\"older estimates.

\begin{theorem}[Interior H\"older estimate with bounded vector fields]
\label{thm:main}
Let $\Omega\subset\R^n$ $(n\geq 3)$ be a convex domain, and let $u\in C^3(\Omega)$ be a
convex function satisfying \[0<\lambda\leq \det D^2 u\leq \Lambda<\infty.\]
 Fix $x_0\in\Omega$ and $h_0>0$ such that
\begin{equation}\label{eq:outer}
 S_{4h_0}\Subset\Omega,\qquad\text{where }
 S_t:=S_u(x_0,t)=\big\{x \in \overline{\Omega}: u(x) < u(x_0) + Du(x_0)\cdot (x - x_0) + t\big\}.
\end{equation}
Let $p\in(1,\infty)$, $\F\in L^\infty(S_{2h_0};\R^n)$, and
$v\in W^{1,2}_{\loc}(S_{2h_0})\cap L^p(S_{2h_0})$ satisfy
\eqref{eq:pde} weakly, that is,
\[
 \int_{S_{2h_0}} U Dv\cdot D\varphi\dd
 =\int_{S_{2h_0}}\F\cdot D\varphi\dd
 \qquad\text{for all }\varphi\in C_c^\infty(S_{2h_0}).
\]
There exist $\gamma=\gamma(n,\lambda,\Lambda)\in(0,1)$ and
$C=C(n, p,\lambda,\Lambda,h_0, \diam(S_{4h_0}))>0$ such that $v$ has a representative with
\begin{equation}\label{eq:main}
 \norm{v}_{L^\infty(S_{h_0})}+[v]_{C^\gamma(S_{h_0})}
 \leq C\left(\norm{\F}_{L^\infty(S_{2h_0})}
                   +\norm{v}_{L^p(S_{2h_0})}\right).
\end{equation}
\end{theorem}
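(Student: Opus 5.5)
The plan is to use the Green's function representation, following the strategy of Gu--Le \cite{GL26}, with the key new input being an $L^1$ bound on $D_x g$ (the full gradient, not the Monge--Amp\`ere gradient) in small sections, uniform in the pole, as announced in the abstract.

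\emph{Step 1: decomposition.} For a section $S:=S_u(y,t)\Subset S_{2h_0}$ with $y\in S_{h_0}$, write $v=w+\tilde v$ on $S$, where $\tilde v$ solves $L_u\tilde v=\diver \F$ in $S$ with $\tilde v=0$ on $\p S$, and $w$ solves $L_u w=0$ in $S$ with $w=v$ on $\p S$. Let $g_S(\cdot,z)$ be the Green's function of $L_u$ on $S$ with pole $z$. Then
\[
\tilde v(z)=-\int_S \F(x)\cdot D_x g_S(x,z)\dd ,\qquad\text{so}\qquad |\tilde v(z)|\le \norm{\F}_{L^\infty}\int_S|D_xg_S(x,z)|\dd .
\]
\emph{Step 2 (main obstacle): the key gradient estimate.} I would prove that there is $\alpha=\alpha(n,\lambda,\Lambda)>0$ such that, for small $t$ (depending on $h_0$ and $\diam S_{4h_0}$),
\[
\sup_{z\in S}\int_{S_u(y,t)}|D_xg_{S_u(y,t)}(x,z)|\dd\le C t^{\alpha}.
\]
Since we only assume $\det D^2u$ bounded, $D^2u$ is merely in $L^{1+\e}$ (De Philippis--Figalli--Savin), and by Caffarelli's theory the sections $S_u(y,t)$ are comparable to ellipsoids of volume $t^{n/2}$ with $C^{1,\beta}$ control, so their Euclidean diameter is at most $Ct^{\beta'}$. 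My approach would be to normalize $S$ by an affine map $T$ to a section of size comparable to the unit ball. Known estimates give $g\in L^{n/(n-2)}_{\mathrm{weak}}$ and $(D^2u)^{-1/2}D g\in L^{n/(n-1)}_{\mathrm{weak}}$. I would then write $|Dg|\le |(D^2u)^{1/2}|\,|(D^2u)^{-1/2}Dg|$ and apply H\"older's inequality using $D^2u\in L^{1+\e}$ after normalization. This requires $\tfrac{1+\e}{2}$-integrability for $|D^2u|^{1/2}$ paired with $L^{q}$, $q<n/(n-1)$, of the Monge--Amp\`ere gradient; the exponents match since $\tfrac{2}{1+\e}\cdot$... roughly requires $1/(2(1+\e))+ (n-1)/n<1$, which holds when $n\ge3$ only if $\e$ is large, so the naive approach fails. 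I would instead use a Caccioppoli-type dyadic decomposition of $S$ into annuli $S_u(z,2^{-k}t)\setminus S_u(z,2^{-k-1}t)$ around the pole. On each annulus, $\int U Dg\cdot Dg$ is controlled by $\sup g^2$ times the volume and the energy, and $|Dg|\le (\operatorname{tr}D^2u\, U Dg\cdot Dg/\det D^2u)^{1/2}$. Cauchy--Schwarz then gives $\int|Dg|\le (\int\Delta u)^{1/2}(\int UDg\cdot Dg)^{1/2}$ on each annulus, and $\int_{S_u(z,s)}\Delta u$ is computed by integration by parts as $\lesssim s\,\mathcal H^{n-1}(\p S_u(z,s))$-type quantities. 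The geometric decay of the sections then gives a summable series, with the annulus scale factor producing $t^\alpha$ after undoing the affine normalization (the gradient picks up $T$, but $\int|Dg|$ transforms to Euclidean diameter-type factors controlled by $C^{1,\beta}$ geometry). This step is where $n\ge3$ and the uniformity in the pole are used, and I expect most of the work to lie here.

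\emph{Step 3: iteration.} Interior Caffarelli--Guti\'errez Harnack/H\"older estimates apply to $w$: we have $\osc_{S_u(y,\theta t)}w\le (1-\delta)\osc_{S}w$, and $\osc_S w\le\osc_{\p S}v\le\osc_S v+2\sup|\tilde v|$ by the maximum principle. Combining this with Step 2 gives $\osc_{S_u(y,\theta t)}v\le(1-\delta)\osc_{S_u(y,t)}v+Ct^\alpha\norm{\F}_\infty$, and a standard iteration lemma yields a $C^\gamma$ modulus with respect to sections. Caffarelli's engulfing and size estimates $|x-y|^{1/\beta}\lesssim t\lesssim|x-y|^{\beta}$ then convert this into Euclidean H\"older continuity. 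The $L^\infty$ bound follows from a local maximum principle for $L_u v=\diver \F$, namely $\sup_{S_{h_0}}|v|\le C(\norm{v}_{L^p(S_{2h_0})}+\norm{\F}_\infty)$. I would prove this by applying the same Green representation in $S_{3h_0/2}$ to handle the $\F$ part and the Caffarelli--Guti\'errez / Le $L^p$-to-$L^\infty$ estimate for $L_u$-subsolutions to handle the remaining part, using $|v|$-subsolution arguments for $p>1$. The constants depend on $h_0$ and $\diam S_{4h_0}$ through the normalization.
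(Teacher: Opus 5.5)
Your architecture matches the paper's. You split $v$ on a section into an $L_u$-harmonic part and a zero-boundary part, and represent the latter as $\int_S D_xg_S\cdot\F\dd$ (your minus sign is wrong but harmless). You then need a pole-uniform bound $\int_S|D_xg_S|\dd\le Ct^{\beta}$, after which you iterate the Caffarelli--Guti\'errez oscillation decay and convert to Euclidean H\"older continuity via $C^{1,\alpha}$ of $u$. For the $L^\infty$ bound, the paper works on small sections $S_u(z,2r_*)$, $z\in S_{5h_0/4}$, rather than on $S_{3h_0/2}$, so that the Green estimate applies as stated.

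In Step~2 you do reach the paper's mechanism: $|Dg|^2\le\lambda^{-1}\Delta u\,U^{ij}D_igD_jg$ with Cauchy--Schwarz on dyadic pieces. Section annuli centered at the pole are an acceptable substitute for the paper's level bands $\{2^jT<g\le2^{j+1}T\}$. The gap is that Step~2, the crux, does not close as written: the two quantitative inputs that make the dyadic series converge are missing or misstated. The affine normalization is also a red herring, since $\int|Dg|\dd$ is not affine invariant and the argument works directly in Euclidean coordinates.

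\emph{Hessian mass.} You need $\int_{S_u(z,s)}\Delta u\dd\le Cs^{a+2\beta}$, where $a=\frac{n-2}{2}$ and $\beta=\frac{\alpha}{1+\alpha}$. This is a strict gain over the critical rate $s^{a}$; with only $s^a$, each dyadic piece contributes $O(1)$ and the sum diverges. Your bound $\int_{S_u(z,s)}\Delta u\lesssim s\,\mathcal H^{n-1}(\p S_u(z,s))$ is false: for $u=|x|^2/2$ the left side is of order $s^{n/2}$ and the right side of order $s^{(n+1)/2}$. Lemma~\ref{lem:mass} instead proceeds as follows.
\begin{itemize}
\item Normalize $u(z)=0$, $Du(z)=0$.
\item Use the $C^{1,\alpha}$ bound along inward segments to get $\sup_{\p S}|Du|\le Cs^{\beta}$.
\item Use $Du(x)\cdot(x-z)\ge s$ on $\p S$ to obtain
$s\int_S\Delta u\dd=s\int_{\p S}|Du|\dH\le\int_{\p S}|Du|^2(x-z)\cdot\nu\dH\le n|S|\sup_{\p S}|Du|^2$.
\end{itemize}

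\emph{Energy on each piece.} The energy $\int U^{ij}D_igD_jg\dd$ on the piece at scale $s$ must be at most $Cs^{-a}$, uniformly for poles near $\p S$. Weak-$L^{n/(n-2)}$ bounds on $g$ do not give this. A Caccioppoli bound in terms of $\sup g^2$ is useful only once you know $\sup g\lesssim s^{-a}$ on the annulus, and you never justify that. The paper combines two facts:
\begin{itemize}
\item the localization \eqref{eq:levels}, $\{g_S(\cdot,z)>k\}\subset S_u(z,c_lk^{-1/a})$ for all $z\in S$ and $k\ge\tau t^{-a}$ (equivalently $g_S(\cdot,z)\le Cs^{-a}$ off $S_u(z,s)$);
\item the truncation identity \eqref{eq:energy}, $\int_{\{g\le k\}}U^{ij}D_igD_jg\dd=k$.
\end{itemize}

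With both inputs, each piece contributes at most $Cs^{-a/2}s^{(a+2\beta)/2}=Cs^{\beta}$. The geometric sum over pieces, together with the piece $\{g\le\tau t^{-a}\}$, gives $Ct^{\beta}$.
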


Theorem~\ref{thm:main} answers the question in \cite[Remark 15.10]{Le24} and 
\cite[Remark~1.6(2)]{Wang25} and removes the extra Hessian-integrability
condition from the higher-dimensional part of
\cite[Theorem~15.6]{Le24}. As stated, the constant in
\eqref{eq:main} does not depend on the ellipticity ratio of the cofactor matrix $U$ on an
interior compact set.

\medskip

In Theorem \ref{thm:finite-Lq-holder}, we extend the interior H\"older estimates in Theorem \ref{thm:main} to the case of $L^q$ vector fields where $q$ is large enough, depending only on $n,\lambda$, and $\Lambda$.
\medskip

An important ingredient in the proof of Theorem~\ref{thm:main} is the $L^1$ estimate, with appropriate decay rates, for the gradient of the Green's function of the linearized Monge--Amp\`ere operator on small sections. For previous studies on 
the Green's function, we refer the reader to \cite{GL26, Le16, Le17, Le18, Le20, Le24, Mal13, Mal14, Mal17, TiW}.

\begin{proposition}[$L^1$ estimate for the gradient of the Green's function]
\label{prop:green}
Assume $n\geq3$, \eqref{eq:det} and \eqref{eq:outer}. Let $S=S_u(z,h)$ satisfy
$S_u(z,2h)\Subset S_{3h_0/2}$. Let $g_S(\cdot, y)$ be the Green's function of the linearized Monge--Amp\`{e}re operator $L_u=D_i(U^{i j} D_j)$ in $S$ with pole $y\in S$, 
that is, 
 \[
                    -D_i\big(U^{i j} D_j g_S(\cdot, y)\big)  =\delta_y \quad \text { in } S, \quad
	               g_S(\cdot, y)  =0 \quad \text { on } \p S.
            \]
            Then, there is a constant $\beta=\beta(n,\lambda,\Lambda)\in (0, 1)$ such that
\begin{equation}\label{eq:green}
 \sup_{y\in S}\int_S|D_xg_S(x,y)|\dd\leq Ch^\beta,
 \qquad C=C(n,\lambda,\Lambda,h_0, \diam(S_{4h_0})).
\end{equation}
\end{proposition}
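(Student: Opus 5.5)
We plan to normalize $S=S_u(z,h)$ and then split $\int_S|D_xg_S|$ dyadically into annular regions around the pole.

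\emph{Normalization.} Since $S_u(z,2h)\Subset S_{3h_0/2}\Subset\Omega$, the standard section geometry for $\lambda\le\det D^2u\le\Lambda$ applies (engulfing, volume estimates $|S_u(x,t)|\approx t^{n/2}$, Caffarelli's $C^{1,\alpha}$ and strict convexity). We choose an affine map $T$ with $B_{c}\subset T^{-1}(S-z)\subset B_{C}$ and $|\det T|\approx h^{n/2}$, and set $\tilde u(\tilde x)=h^{-1}u(z+T\tilde x)$. The Green's function transforms as $g_S(x,y)=h^{-1}|\det T|^{2/n}\ldots$. Rather than tracking the constants by hand, we will use the known $L^1$ bound for the Monge--Amp\`ere gradient on normalized sections,
\[
\int_{\tilde S}|(D^2\tilde u)^{-1/2}D\tilde g|\le C,
\]
from \cite{GL26}, together with the known pointwise and $L^q$ bounds of \cite{Le17,Le24}: $g_S(x,y)\lesssim \int_{t}^{h}s^{-n/2}\,ds\approx t^{1-n/2}$ for $x\notin S_u(y,t)$, and $\|D g_S\|_{L^{q}}$ for small $q$.

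\emph{Converting to the Euclidean gradient.} The passage from $(D^2u)^{-1/2}Dg$ to $Dg$ is where the Euclidean length scale enters. By Cauchy--Schwarz,
\[
\int|Dg|\le \Big(\int \langle U Dg,Dg\rangle\,\frac{|\cdot|^2}{\ldots}\Big)^{1/2}\cdots .
\]
A better route: on each annulus $A_k=S_u(y,2^{-k+1}h)\setminus S_u(y,2^{-k}h)$, apply Caccioppoli for $L_u$ in energy form, which gives $\int_{A_k}U Dg\cdot Dg\lesssim (2^{-k}h)^{-1}\cdots$. This controls $U Dg\cdot Dg\ge \lambda|Dg|^2/\|D^2u\|$, however, and so brings in $\|D^2u\|$, which is only in $L^{1+\epsilon}$ by De Philippis--Figalli--Savin. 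We will use instead the bound $|Dg|\le |(D^2u)^{1/2}|\,|(D^2u)^{-1/2}Dg|$ with H\"older's inequality: $D^2u\in L^{1+\epsilon}$ uniformly on normalized sections, while $(D^2u)^{-1/2}Dg\in L^{q}$ for some $q>2$ close to... In dimension $n\ge3$ this integrability is borderline, so we will instead exploit the decay. For $y$ fixed, write
\[
\int_S|Dg|=\sum_k\int_{A_k}|Dg| .
\]
On $A_k$, $g$ solves the homogeneous equation, so the interior $W^{1,1+\epsilon}$ estimates of De Philippis--Figalli--Savin, rescaled to the section $S_u(y,2^{-k}h)$, give
\[
\int_{A_k}|Dg|\lesssim \operatorname{osc}_{\tilde A_k}g\cdot |A_k|\,/\,(\text{Euclidean width of }A_k) .
\]
We have $\operatorname{osc}g\lesssim(2^{-k}h)^{1-n/2}$ and $|A_k|\approx(2^{-k}h)^{n/2}$. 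By Caffarelli's strict convexity, $S_u(y,t)\supset B(y,ct^{1-\alpha'})$, with the inner radius at least $ct^{1/(2\ldots)}$. Hence
\[
\int_{A_k}|Dg|\lesssim (2^{-k}h)^{1-\theta}, \qquad \theta<1 ,
\]
where Caffarelli's $C^{1,\alpha}$ estimate gives inner radius $\gtrsim t^{1/(1+\alpha)}$, so $\theta=1/(1+\alpha)<1$. Summing the geometric series in $k$ yields $\int_S|Dg|\lesssim h^{1-\theta}=h^\beta$ with $\beta=\alpha/(1+\alpha)$, uniformly in $y$ and depending on $h_0,\diam(S_{4h_0})$ only through the $C^{1,\alpha}$ constants. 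The annuli near $\partial S$ are treated with the same estimate, using the boundary bound $g\le$ const together with the fact that $S_u(z,2h)$ lies inside the domain.

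\emph{Main obstacle.} The hardest step is the boundary layer near $\partial S$, where annuli around $y$ may meet $\partial S$ and interior $W^{1,1+\epsilon}$ estimates fail. We will first try to extend $g$ by zero and compare it with $g_{S_u(z,2h)}$. The difference $g_{S_u(z,2h)}-g_S$ is $L_u$-harmonic in $S$ and nonnegative. Interior estimates apply to $g_{S_u(z,2h)}$ in a neighbourhood of $\overline S$, leaving only the gradient of a positive $L_u$-harmonic function bounded by $Ch^{1-n/2}$. Its gradient will be controlled in $L^1$ by an integration-by-parts identity, $\int_S UDw\cdot Dw$ with a cutoff, combined with Caffarelli's inner-radius bound.
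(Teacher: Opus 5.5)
\textbf{There is a genuine gap.} Your dyadic scaling is the right one: each scale $t$ contributes $t^{\alpha/(1+\alpha)}$. But the estimate meant to produce it on each annulus, $\int_{A_k}|Dg|\lesssim\osc g\cdot|A_k|/(\text{width})$, is not available as a black box. De Philippis--Figalli--Savin give $D^2u\in L^{1+\epsilon}$, not a gradient bound for $L_u$-harmonic functions, and for the non-uniformly elliptic operator $L_u$ such a Euclidean bound must be proved.

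The proof is exactly the route you discarded. From $|Dg|^2\le\lambda^{-1}(\Delta u)\,U^{ij}D_igD_jg$ and Cauchy--Schwarz,
\[
\int_E|Dg|\,\mathrm{d}x\le\lambda^{-1/2}\Big(\int_E U^{ij}D_igD_jg\,\mathrm{d}x\Big)^{1/2}\Big(\int_E\Delta u\,\mathrm{d}x\Big)^{1/2}.
\]
So an $L^1$ bound needs only the energy ($q=2$, available away from the pole) and the $L^1$ mass of $\Delta u$. Neither $L^{1+\epsilon}$ Hessian integrability nor $q>2$ integrability of $(D^2u)^{-1/2}Dg$ enters. The decay then comes from the ingredient your proposal lacks: a quantitative Hessian mass bound on small sections, $\int_{S_u(y,s)}\Delta u\,\mathrm{d}x\le Cs^{(n-2)/2+2\beta}$ with $\beta=\alpha/(1+\alpha)$. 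The paper proves it by the divergence theorem on the section, together with $\sup_{\partial S_u(y,s)}|Du-Du(y)|\le Cs^{\beta}$ from the $C^{1,\alpha}$ estimate. In your normalized picture it is $\int_{\tilde S}\Delta\tilde u\le C$ combined with $\|T^{-1}\|\lesssim s^{-1/(1+\alpha)}$; this is what your inner-radius heuristic is really encoding. Paired with an energy of order $s^{-(n-2)/2}$ at scale $s$, it gives exactly $s^{\beta}$.

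The boundary layer is also left open, and the uniformity in the pole makes it essential. Your comparison with $g_{S_u(z,2h)}$ does not resolve it. The difference $w=g_{S_u(z,2h)}-g_S$ has boundary values $g_{S_u(z,2h)}(\cdot,y)$ on $\partial S$, which become arbitrarily large near $y$ as $y$ approaches $\partial S$. So $Dw$ near $\partial S$ is no easier than $Dg_S$, and an integration by parts with a cutoff gives only interior control.

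The paper sidesteps this by decomposing along level sets of $g$ instead of sections centered at $y$. The truncation identity $\int_{\{g\le k\}}U^{ij}D_igD_jg\,\mathrm{d}x=k$ bounds the energy on each band $\{2^jT<g\le2^{j+1}T\}$ with no cutoff and no boundary term. The localization $\{g>k\}\subset S_u(y,c_lk^{-2/(n-2)})\Subset S_u(z,3h/2)$ puts each band inside a small section where the mass bound applies. The set $\{g\le T\}$, with $T=\tau h^{-(n-2)/2}$, is handled by the same pairing with $\int_S\Delta u\le Ch^{(n-2)/2+2\beta}$.

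If you prefer to keep your annuli, use Caccioppoli with the section-adapted cutoff $\eta=\phi\big((u-\ell_y)/t\big)$, where $\ell_y(x)=u(y)+Du(y)\cdot(x-y)$, and the test function $\eta^2g\in W^{1,2}_0(S)$. This works even where the annulus crosses $\partial S$, but it too only closes once paired with the Hessian mass bound.
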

\begin{remark}
The estimate \eqref{eq:green} is uniform even when the pole approaches $\partial S$.
Its proof uses the geometry of sections of the Monge--Amp\`ere equation \cite[Chapter~5]{Le24} and the Green's function estimates in
\cite[Chapter~14]{Le24}. We give here some heuristic ideas. We estimate the 
Euclidean gradient $D g$ of the Green's function $g:=g_S(\cdot, y)$ via the Monge--Amp\`ere gradient $(D^2 u)^{-1/2}  Dg$ with a possibly large prefactor being the Laplacian of $u$ (see \eqref{DgDelta}):
\[|Dg|^2\leq \lambda^{-1}(\Delta u) U^{ij}D_i gD_j g\equiv \lambda^{-1}(\Delta u) (\det D^2 u) |(D^2 u)^{-1/2} Dg|^2.\]
The whole section $S$ is foliated by sublevel sets $S_k=\{g\leq k\}$ of the Green's function. It is interesting to note that $U^{ij}D_i gD_j g$ gives total mass $k$ on $S_k$, and this can be large. However, this is compensated by a localization property which says that the level-band $S_k\setminus S_{k/2}$ is
contained in a section of $u$ with height of order  $k^{-\frac{2}{n-2}}$ (see \eqref{eq:levels}); furthermore, on this section, the Laplacian measure $\Delta u\,\mathrm{d}x$  can be shown to have total measure of order $k^{-\frac{2}{n-2} (\frac{n-2}{2} + \frac{2\alpha}{1+\alpha})}$ (see \eqref{eq:mass}) where
$\alpha=\alpha(n,\lambda,\Lambda)\in (0, 1)$ is the exponent in the $C^{1,\alpha}$ estimates for $u$. Via the H\"older inequality, this indicates that the total mass of $|Dg|$ in $S_k\setminus S_{k/2}$ decays when $k$ is large. A dyadic summation over level-bands gives the desired result.
\end{remark}
We also obtain global results under suitable assumptions on the domain and boundary data.
In particular, a global extension  of Theorem \ref{thm:main} is the following.

\begin{theorem}[Global H\"older estimate with $L^q$ vector fields]
\label{thm:global-finite-Lq}
Let $\Omega$ be a uniformly convex domain in $\R^n$ $(n\geq 3)$, that is, for all $z\in \partial \Omega$, there is a ball $B_R(z_0)$ such that $\Omega \subset B_R(z_0)$ and $\partial \Omega \cap \partial B_R(z_0) = \{z\}$ for some uniform convexity radius $R>0$.  
Let $\partial \Omega \in C^3$ and $u\in C^{1, 1}(\overline{\Omega})\cap C^3(\Omega)$ be a convex function satisfying \[\lambda \leq \det D^2 u\leq \Lambda \quad \text{and} \quad u|_{\partial \Omega} \in C^3.\]
Let $q>q_b=q_b(n,\lambda,\Lambda)>n$, $\sigma\in(0,1]$,
$\F\in L^q(\Omega;\mathbb R^n)$, and
$v\in W^{1,2}(\Omega)\cap C(\overline\Omega)$ satisfy
\[
 L_uv=\diver\F\quad\text{weakly in }\Omega,
 \qquad
 v=\phi\quad\text{on }\partial\Omega,
\]
where $\phi\in C^\sigma(\partial\Omega)$.  Then there exist $\gamma\in (0, 1)$ and $C>0$, which depend only on $n, \lambda, \Lambda, q,  \sigma, \|u\|_{C^3(\partial \Omega)}$, $R$, and the $C^3$ regularity of $\partial \Omega$,
such that
\begin{equation}\label{eq:global-finite-Lq-holder}
 \|v\|_{C^\gamma(\overline\Omega)}
 \le C\left(
      \|\phi\|_{C^\sigma(\partial\Omega)}
      +\|\F\|_{L^q(\Omega)}
      \right).
\end{equation}
\end{theorem}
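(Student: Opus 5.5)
Under the boundary hypotheses on $\Omega$ and $u$, I would prove Theorem~\ref{thm:global-finite-Lq} by splitting $v=v_1+v_2$, where
\[
L_uv_1=0 \text{ in }\Omega,\quad v_1=\phi \text{ on }\partial\Omega,
\qquad\text{and}\qquad
L_uv_2=\diver\F \text{ in }\Omega,\quad v_2=0 \text{ on }\partial\Omega.
\]
The homogeneous part $v_1$ is handled by the known global H\"older estimate for $L_u v=0$ with $C^\sigma$ boundary data. This is \cite[Chapter~12]{Le24}, the boundary Harnack/H\"older theory of Le--Nguyen, combined with the interior Caffarelli--Guti\'errez estimate. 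It gives
\[
\|v_1\|_{C^{\gamma_1}(\overline\Omega)}\le C\|\phi\|_{C^\sigma(\partial\Omega)}.
\]
This uses the boundary localization property, which holds because $\Omega$ is uniformly convex, $\partial\Omega\in C^3$ and $u|_{\partial\Omega}\in C^3$.

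The main work is $v_2$, which I would represent through the Green's function:
\[
v_2(x)=-\int_\Omega D_y g_\Omega(x,y)\cdot \F(y)\,dy .
\]
H\"older regularity then reduces to two estimates. The first is a uniform bound $\sup_{x}\|D_yg_\Omega(x,\cdot)\|_{L^{q'}(\Omega)}\le C$. The second is a decay estimate: $\|D_yg_{S}(x,\cdot)\|_{L^{q'}(S)}\le Ch^{\beta}$ on small sections $S=S_u(z,h)$, including boundary sections with $z\in\partial\Omega$ or near it.

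For the decay estimate I would upgrade the argument of Proposition~\ref{prop:green} from $L^1$ to $L^{q'}$ with $q'$ close to $1$. On each level band $S_k\setminus S_{k/2}$, H\"older's inequality with exponents $2/q'$ and its conjugate gives
\[
\int |Dg|^{q'}\le \Big(\int U^{ij}D_igD_jg\Big)^{q'/2}\Big(\int (\Delta u)^{q'/(2-q')}\Big)^{(2-q')/2}.
\]
The Laplacian factor is controlled by the $W^{2,1+\e}$ estimates of De Philippis--Figalli--Savin, globally by Savin's boundary $W^{2,1+\e}$ estimates. Since $q'/(2-q')$ is close to $1$ when $q'$ is near $1$, the dyadic decay in $k$ survives provided $q'$ is sufficiently close to $1$. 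This closeness is exactly what fixes the threshold $q_b(n,\lambda,\Lambda)$.

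Near the boundary I would replace the interior level-band localization with the boundary version. It comes from Le--Savin's boundary localization theorem: boundary sections of height $t$ are comparable to half-ellipsoids with volume $\sim t^{n/2}$, and the Green's function bounds in \cite[Chapter~14]{Le24} extend to boundary sections. I would also need the $C^{1,\alpha}$ control of $u$ up to the boundary, which gives the mass bound on $\Delta u$ analogous to \eqref{eq:mass}.

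Given the decay estimate, the H\"older bound for $v_2$ follows by the standard iteration. For $x$ in a section of height $h$, write $v_2$ on $S_u(z,h)$ as a solution $w$ with $L_uw=\diver\F$ and $w=0$ on $\partial S_u(z,h)$, plus an $L_u$-harmonic remainder. The decay estimate gives $\|w\|_{L^\infty}\le Ch^{\beta}\|\F\|_{L^q}$. The remainder oscillates by a factor $\theta<1$ on $S_u(z,h/2)$, by the interior or boundary Harnack inequality. This yields $\osc_{S_u(z,h)} v_2\le Ch^{\gamma}$. Finally, the engulfing property and $|x-z|\sim$ powers of $h$, which hold both interior and at the boundary, convert section-oscillation into Euclidean $C^\gamma$.

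I expect the main obstacle to be the boundary version of the Green's function gradient decay, uniform in the pole. There the level bands of $g$ meet $\partial\Omega$, and the interior section geometry must be replaced by the boundary localization, for both the height estimate and the Laplacian mass bound.
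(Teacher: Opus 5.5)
Your plan is viable, but it follows a different route from the paper. You split $v=v_1+v_2$ and import the global H\"older estimate for the homogeneous Dirichlet problem. That estimate is \cite[Theorem~13.2]{Le24}, in Chapter~13 rather than Chapter~12. You then treat the zero-boundary part $v_2$ by one section iteration run at both interior and boundary sections, which forces you to prove decay in $h$ of $\|Dg\|_{L^{q'}}$ on boundary sections.

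The paper never splits globally and never needs decay of the Green gradient in the height. Proposition~\ref{prop:green-global-higher} gives an $L^{1+\varepsilon_b}$ bound, with the fixed threshold $T_g$, that is uniform over all poles and over \emph{all} proper $V\subset\Omega$. Smallness on small sets then comes from H\"older's inequality with $q>q_b$: $\|w\|_{L^\infty(V)}\le C|V|^{\kappa_q}\|\F\|_{L^q(V)}$ (Lemma~\ref{lem:global-Lq-maximum}). So the step you single out as the main obstacle is sidestepped.

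Because this bound holds on arbitrary proper sets, the paper proves boundary continuity for $v$ itself, data and forcing together. It uses the explicit barrier $w_{\delta_2}$ on Euclidean half-balls $\Omega\cap B_{\delta_2}(x_0)$ and absorbs $\diver\F$ through the zero-boundary solution $z_A$ (Proposition~\ref{prop:boundary-finite-Lq}). It then patches this with the interior estimate of Lemma~\ref{intHLq} on maximal interior sections, using Savin's localization \eqref{eq:maximal-section-global-Lq}.

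Your route needs two further ingredients.
\begin{itemize}
\item A decay estimate, uniform in $h$, for $L_u$-harmonic functions vanishing on $\partial\Omega\cap S_u(z,h)$ in boundary sections. This is not the boundary Harnack inequality itself, but it can be obtained by boundary-localization rescaling, a barrier at unit scale, and interior Harnack chaining.
\item A treatment of sections centered at interior points that cross $\partial\Omega$. Your appeal to engulfing only gestures at this; it is precisely the maximal-interior-section argument.
\end{itemize}
You should also define $v_2\in W^{1,2}_0(\Omega)$ first by Lax--Milgram and set $v_1=v-v_2$, so that $v_1$ is a legitimate $W^{1,2}$ solution. Its continuity up to $\partial\Omega$ then follows from that of $v_2$.

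What your approach buys is modularity: it separates boundary data from forcing and reuses the homogeneous theory verbatim. The paper's approach is more self-contained, needing only a uniform Green bound plus one barrier, with no Green decay estimate and no boundary-section oscillation lemma.
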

\begin{remark} 
\leavevmode
\begin{enumerate}
\item[(i)] (Two dimensions) 
We state our results including Theorems \ref{thm:main} and 
\ref{thm:global-finite-Lq} for dimensions $n\geq 3$. However, the methods of the proofs are applicable to the two-dimensional case. For this, we only need to change the localization of the superlevel sets of the Green's function of the 
Monge--Amp\`ere operator. In particular, when $n=2$, we need to replace  \eqref{eq:levels} by
\begin{equation*}
 \{x\in S:g_S(x,y)>k\}
 \subset S_u(y, 2\eta h 2^{-k/\tau}),
\end{equation*}
and \eqref{eq:global-level-localization-Lq} by
\begin{equation*}
 \{x\in V:g_V(x,y)>k\}
 \subset S_u(y,  h 2^{-k/{\bar C}}),
\end{equation*}
if $V$ is contained in a section of $u$ with height $h$.
\item[(ii)] (Global assumptions) In the proof of Theorem \ref{thm:global-finite-Lq}, the refined global analysis of the Green's function plays an important role. This was developed in 
\cite[Section 14.4]{Le24} under the global assumption $u\in C^{1,1}(\overline{\Omega})$ on the convex potential function.
However, our global H\"older estimates do not depend on the bounds on the Hessian of $u$. This point will be crucial in establishing the nonlinear stability result in Theorem \ref{thm:MA-stability} via approximations. 
\end{enumerate}
\end{remark}
In the next two subsections, we will describe some applications of Theorems \ref{thm:main} and 
\ref{thm:global-finite-Lq}.

\subsection{Application to dual semigeostrophic equations}\label{subsec:intro-dual-semi-eq}
Our first application is concerned with the semigeostrophic equations, which 
we give a brief introduction here.

\medskip

The semigeostrophic equations are simplified models used in meteorology to
describe large-scale atmospheric flows, particularly in mid-latitude weather
systems where the Earth's rotation plays a crucial role. As explained by
Benamou and Brenier \cite[Section~2.2]{BB}, Loeper \cite[Section~1.1]{Lo06},
and Cullen \cite{Cu06}, these equations can be formally derived from the
three-dimensional incompressible Euler equations under the Boussinesq and
hydrostatic approximations, in a regime where the Coriolis force is dominant.
For large-scale atmospheric motions, the Coriolis effect dominates the
nonlinear advection term, and much of the mathematical literature has focused
on the two-dimensional periodic setting; see, for instance,
\cite{ACDF,Le18}. The three-dimensional case is physically relevant but
involves additional difficulties; see \cite{ACDF14}. 
Our application concerns the periodic dual system on the three-dimensional torus
$\mathbb T^3:=\mathbb R^3/\mathbb Z^3$.

\medskip

The semigeostrophic equations take the form
\begin{equation}\label{eq:SG}
 \left\{
 \begin{aligned}
 \partial_t u_t^g+(u_t\cdot D)u_t^g+D p_t
     &=-Ju_t+m_t e_3,\\
 \partial_t m_t+(u_t\cdot D)m_t&=0,\\
 u_t^g&=J D p_t,\\
 \diver u_t&=0,\\
 p_{t=0}&=p^0,
 \end{aligned}
 \right.
\end{equation}
where $e_3=(0,0,1)^T$ and
\[
 J=\begin{pmatrix}
 0&-1&0\\
 1&0&0\\
 0&0&0
 \end{pmatrix}.
\]
The functions $u_t$, $p_t$, and $m_t$ denote the velocity, pressure, and
buoyancy, respectively, and $u_t^g$ is the semigeostrophic wind. We use the
notation $f_t=f(t,\cdot)$.

Substituting $u_t^g=J D p_t$ and introducing
\[
 P_t(x):=p_t(x)+\frac12(x_1^2+x_2^2),
\]
the system \eqref{eq:SG} can be rewritten as
\begin{equation}\label{eq:SG-primal}
 \left\{
 \begin{aligned}
 \partial_t D P_t+D^2P_t\,u_t
     &=J(D P_t-x),\\
 \diver u_t&=0,\\
 P_{t=0}(x)&=p^0(x)+\frac12(x_1^2+x_2^2).
 \end{aligned}
 \right.
\end{equation}
Energetic considerations make convexity of $P_t$ natural; see
\cite[Section~3.2]{Cu06}. Since the existence theory for the primal system is
difficult, one passes to the dual formulation. If $P_t^*$ denotes the
Legendre transform of $P_t$, the dual semigeostrophic equations are
\begin{equation}\label{eq:dual-SG-intro}
 \left\{
 \begin{aligned}
 \partial_t\rho_t+\operatorname{div}(\rho_t U_t)&=0,\\
 U_t(x)&=J\bigl(x-D P_t^*(x)\bigr),\\
 \det D^2P_t^*&=\rho_t,\\
 \rho_{t=0}&=\rho^0,
 \end{aligned}
 \right.
 \qquad (t,x)\in(0,\infty)\times\mathbb T^3,
\end{equation}
with $P_t^*$ convex and
\begin{equation}\label{eq:period-intro}
 P_t^*(x)-\frac12|x|^2\quad\text{is $\mathbb Z^3$-periodic.}
\end{equation}
The Monge--Amp\`ere equation in \eqref{eq:dual-SG-intro} is understood in the Aleksandrov sense for weak
solutions. This dual formulation and its relation to the primal variables are
discussed in \cite{BB,Lo05,ACDF14}. Formally, once a sufficiently regular
dual solution is available, one can recover the physical variables through
the Legendre transform. The main obstruction is therefore to obtain enough
regularity for the dual potential.

\medskip

The spatial regularity is by now well understood through the regularity theory
of the Monge--Amp\`ere equation; see, for instance, \cite{Fi, Fi18, Le24}. It is then
natural to investigate regularity in time. Differentiating
$\det D^2P_t^*=\rho_t$ with respect to $t$ and using the continuity equation
in \eqref{eq:dual-SG-intro}, we obtain
\[
 \cof(D^2P_t^*)^{ij}D_{ij}(\partial_tP_t^*)
 =\partial_t\rho_t
 =\operatorname{div}(-\rho_tU_t)
 =:\operatorname{div}F_t.
\]
Since the cofactor matrix is divergence-free, $v_t:=\partial_tP_t^*$
satisfies the linearized Monge--Amp\`ere equation
\begin{equation}\label{eq:parP-intro}
 \operatorname{div}\!\left(
   \rho_t(D^2P_t^*)^{-1} D\partial_tP_t^*
 \right)=\operatorname{div}F_t,
 \qquad F_t=-\rho_tU_t.
\end{equation}
If the initial density $\rho^0$ is bounded from below and above by positive constants, we can show that $F_t$ is uniformly bounded in
$L^\infty(\mathbb T^3)$. Moreover, the time-regularity estimates developed
in \cite{Lo05,ACDF,ACDF14} give, after fixing the additive constant of
$P_t^*$, an $L^p$ bound for $\partial_tP_t^*$ for some $p>1$. Thus the
H\"older continuity of $\partial_tP_t^*$ reduces exactly to the estimate of
Theorem~\ref{thm:main}.

\medskip

Regarding previous time-regularity results, Loeper
\cite[Theorem~9.2]{Lo05} proved that if the initial density is sufficiently
close to a constant, namely
$1-\varepsilon_0\leq\rho^0\leq1+\varepsilon_0$ on $\mathbb T^3$, then
$\partial_tP_t$ and $\partial_tP_t^*$ are 
uniformly H\"older continuous in space, with an essentially uniform bound in time. Le \cite[Theorem~1.2]{Le18} later removed the
small-oscillation assumption on the two-dimensional torus. Combining
Theorem~\ref{thm:main} with the three-dimensional estimates in
\cite{ACDF14}, and otherwise following the argument of
\cite[Section~4]{Le18} (see also \cite[Theorem 15.13]{Le24}), gives the following three-dimensional result.

\begin{theorem}[H\"older regularity of three-dimensional dual semigeostrophic equations]\label{thm:holder-dual}
Let $\rho^0$ be a probability density on $\mathbb T^3$ satisfying
\[
 0<\lambda\leq\rho^0\leq\Lambda<\infty.
\]
Let $(\rho_t,P_t^*)$ be a solution of
\eqref{eq:dual-SG-intro}--\eqref{eq:period-intro} with normalization $\int_{\mathbb T^3}P_t^*\, dx=0$, and let $P_t$ be the
Legendre transform of $P_t^*$. Then there exist
$\alpha=\alpha(\lambda,\Lambda)\in(0,1)$ and
$C=C(\lambda,\Lambda)>0$ such that
\begin{equation}\label{eq:sg-main-intro}
 \|\partial_tP_t^*\|_{L^\infty((0,\infty);C^\alpha(\mathbb T^3))}
 +\|\partial_tP_t\|_{L^\infty((0,\infty);C^\alpha(\mathbb T^3))}
 \leq C.
\end{equation}
\end{theorem}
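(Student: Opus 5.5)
The plan is to apply Theorem~\ref{thm:main} to $v_t:=\partial_tP_t^*$ for each fixed $t$, with $u:=P_t^*$ on $\R^3$ and $\F=F_t=-\rho_tU_t$, uniformly in $t$. Then we transfer the estimate to $\partial_tP_t$ through the Legendre transform, following \cite[Section~4]{Le18} and \cite[Theorem~15.13]{Le24}.

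\emph{Step 1 (uniform bounds on the data).} The continuity equation is transport by the divergence-free field $U_t=J(x-DP_t^*)$, since $\diver(Jx)=0$ and $\diver(JDP^*_t)=0$ because $J$ is antisymmetric. Hence $\lambda\le\rho_t\le\Lambda$ for all $t$. Periodicity \eqref{eq:period-intro} gives $|DP_t^*(x)-x|\le C$, since $DP_t^*(\mathbb T^3)$ has bounded oscillation ($DP_t^*-x$ is periodic and $DP^*_t$ is monotone). Consequently $\|F_t\|_{L^\infty}\le C(\lambda,\Lambda)$. Caffarelli's theory then gives that $P_t^*$ is strictly convex with $C^{1,\alpha}$ bounds uniform in $t$. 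In particular, for a fixed height $h_0$, every section $S_{P_t^*}(x_0,4h_0)$ has diameter bounded independently of $t$ and $x_0$, by compactness of normalized periodic solutions. The equation $L_{P_t^*}v_t=\diver F_t$ holds weakly, as derived in \eqref{eq:parP-intro}. Regularity ($C^3$) is obtained by smoothing $\rho^0$, proving the estimate for smooth solutions, and passing to the limit, as in \cite{Le18}.

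\emph{Step 2 ($L^p$ bound).} Next we invoke the three-dimensional time-regularity estimates of \cite{ACDF14} (after Loeper \cite{Lo05}). With the normalization $\int P_t^*=0$, these give $\|\partial_tP_t^*\|_{L^p(\mathbb T^3)}\le C$ for some $p>1$, or they give a $W^{1,1}$-type bound on $\partial_t DP_t^*$ weighted by $\rho_t$, from which a Sobolev/Poincar\'e argument yields $L^p$. I expect this to be the main obstacle: one must check that the available estimate indeed controls $\partial_tP_t^*$ itself, not only its gradient in a weak norm, uniformly in $t$. The tool is that $\int\partial_tP_t^*=0$ combined with the \cite{ACDF14} bound $\int\rho_t|D^2P_t^*|^{-1}\ldots$ giving $D\partial_tP_t^*\in L^{r}$ for some $r\in(1,\ldots)$, which gives $L^p$ via Poincar\'e.

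\emph{Step 3 (H\"older estimates).} Applying Theorem~\ref{thm:main} on sections centered at any $x_0$, and covering $\mathbb T^3$ by finitely many such sections using periodicity, gives $\|\partial_tP_t^*\|_{C^\gamma(\mathbb T^3)}\le C(\lambda,\Lambda)$ uniformly in $t$. For $P_t$, differentiate the identity $P_t(x)+P_t^*(DP_t(x))=x\cdot DP_t(x)$ in $t$ to get
\[
\partial_tP_t(x)=-\partial_tP_t^*(DP_t(x)).
\]
Since $DP_t$ is uniformly $C^{\alpha'}$ (the inverse of the strictly convex $C^{1,\alpha}$ map $DP^*_t$, with $\det D^2P_t=1/\rho_t(DP_t)$ bounded), composition gives $\partial_tP_t\in C^{\gamma\alpha'}$ uniformly. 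Taking $\alpha:=\gamma\alpha'$ yields \eqref{eq:sg-main-intro}.
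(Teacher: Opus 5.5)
Your analytic route is the same as the paper's. You bound $\|F_t\|_{L^\infty}\le C$ using the periodic displacement bound. You then take the \cite{ACDF14}-type estimate $\|D\partial_tP_t^*\|_{L^{1+\kappa}(\mathbb T^3)}\le C$ and convert it into an $L^{1+\kappa}$ bound on $\partial_tP_t^*$ via Poincar\'e and $\int_{\mathbb T^3}\partial_tP_t^*\,\mathrm{d}x=0$. This is exactly what the paper imports, so your Step~2 is not an obstacle. Next you apply Theorem~\ref{thm:main} with $p=1+\kappa$ on sections of uniformly bounded diameter that cover a cell. Finally you compose $\partial_tP_t=-\partial_tP_t^*(DP_t)$ with the uniform $C^{1,\beta}$ bound for $P_t$.

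The step that fails as written is the regularization, which is needed because Theorem~\ref{thm:main} requires a $C^3$ potential. Smoothing $\rho^0$ and re-solving produces \emph{different} solutions of the dual system. To use them you would need global-in-time smooth solutions in three dimensions, but only short-time smooth existence is known \cite{Lo06}. More seriously, you would need a uniqueness or stability theorem for weak solutions with merely bounded density, to identify the limit with the given $(\rho_t,P_t^*)$; no such result is available at this level of regularity.

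The paper, following \cite{ACDF,Lo05}, instead regularizes the given solution itself. It extends by $\rho_t=\rho^0$, $U_t=0$ for $t\le0$, and sets $\rho_t^\varepsilon=\rho_t*\eta_\varepsilon$ and $U_t^\varepsilon=((\rho_tU_t)*\eta_\varepsilon)/\rho_t^\varepsilon$ with a space-time mollifier. It then solves $\det D^2P_t^{*,\varepsilon}=\rho_t^\varepsilon$ on $\mathbb T^3$. Since the continuity equation is linear in $(\rho,\rho U)$, it survives the mollification. Hence $\partial_tP_t^{*,\varepsilon}$ solves the same divergence-form equation with $F_t^\varepsilon=-(\rho_tU_t)*\eta_\varepsilon$, which is still bounded.

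The bounds $\lambda\le\rho_t^\varepsilon\le\Lambda$ are inherited from $\rho_t$ by convolution. They do not come from your divergence-free transport argument, since $U_t^\varepsilon\neq J(x-DP_t^{*,\varepsilon})$. The $L^{1+\kappa}$ gradient estimate goes through verbatim. The uniform bounds then pass to $\varepsilon\to0$ by stability of normalized periodic Monge--Amp\`ere solutions, and no well-posedness theory for the semigeostrophic system is needed.
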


The proof is given in Section~\ref{sec:sg}. The crucial point, compared with
the earlier argument, is that Theorem~\ref{thm:main} only requires an
$L^p$ norm of the solution for some $p>1$. Hence, the known estimate
$D\partial_tP_t^*\in L^{1+\kappa}$, together with Poincar\'e's
inequality after normalization, supplies exactly the integrability needed
for \eqref{eq:main}.

\subsection{Application to quantitative homogenization for the Monge--Amp\`ere equation}\label{subsec:intro-homogenization}
A second application concerns quantitative homogenization for the Monge--Amp\`ere equation. Here the domain and the boundary data are fixed, 
while the prescribed Monge--Amp\`ere density oscillates on a
scale tending to zero. The question is how these oscillations affect the convex solution at a fixed spatial scale. 
Unlike a perturbation with small amplitude, a rapidly oscillating density need not be close to its average in any strong $L^p$ norm. 
An effective stability estimate must therefore detect cancellation in the density, rather than only its pointwise size.

\medskip

Let $\Omega\subset\R^n$ ($n\geq2$) be a bounded smooth uniformly convex
domain, and $\mathbb T^n=\R^n/\mathbb Z^n$ so it is $[0, 1]^n$ with opposite faces identified.
Let $f\in L^\infty(\mathbb T^n)$, with periodic extension to $\R^n$, satisfy
\begin{equation}\label{eq:hom-intro-density}
	0<\lambda\leq f\leq\Lambda,
	\qquad \bar f:=\int_{[0,1]^n}f(y)\,\mathrm{d}y.
\end{equation}
Fix also a boundary datum $\varphi\in C^3(\partial\Omega)$.
We consider the convex Aleksandrov solutions of
\begin{equation}\label{eq:hom-intro-problems}
	\left\{
	\begin{aligned}
		\det D^2u_\varepsilon&=f(x/\varepsilon)
		&&\text{in }\Omega,\\
		\det D^2\bar u&=\bar f
		&&\text{in }\Omega,\\
		u_\varepsilon=\bar u&=\varphi
		&&\text{on }\partial\Omega.
	\end{aligned}
	\right.
\end{equation}
Periodic averaging (see \cite[pp. 5-6]{ZKO}) gives
$f(\cdot/\varepsilon)\rightharpoonup^*\bar f$ in $L^\infty(\Omega)$.
Together with the compactness and weak continuity of Monge--Amp\`ere
measures, this identifies $\bar u$ as the uniform limit of
$u_\varepsilon$; see \cite[Chapter 3]{Le24}.
This qualitative argument does not provide a convergence rate.

\medskip

Caffarelli and Li \cite{CL04} established a quantitative estimate in
their study of entire solutions with periodic Monge--Amp\`ere density.
For a normalized convex domain $B_1\subset O\subset B_n$ and a positive
periodic density $g$ of cell average one, let $w$ and $\bar w$ have zero
boundary values on $\p O$ and Monge--Amp\`ere densities $g$ and $1$, respectively. Their
Theorem~0.3 gives
\[
\|w-\bar w\|_{L^\infty(O)}
\leq C\sum_{i=1}^n|\ell_i|^b,
\]
where $\ell_1,\ldots,\ell_n$ are linearly independent period vectors
and $C,b>0$ depend only on $n$ and an upper bound for $g$.
Their Remark~0.7 extends this estimate to bounded periodic densities
by approximation. For periods $\varepsilon e_i$, it gives a rate
$C\varepsilon^b$. The proof uses periodic correctors and interior
derivative estimates for the constant-density solution.
Li and Lu \cite{LL22} subsequently proved the existence of periodic
correctors and extended the entire-solution classification to 
measurable Monge--Amp\`ere densities bounded away from zero. Recently, Jin, Li, Tran and Tu \cite{JLTT} classified global convex solutions to $\det D^2 u=\mu$ for all nonzero, nonnegative locally finite periodic Borel measures $\mu$ on $\R^n$. The above-mentioned results concern
the structure of entire solutions. Here, we seek a uniform rate for
the fixed-domain Dirichlet problem.

\medskip

The difficulty is not the identification of the effective density.
Indeed, for every $1\leq p<\infty$, periodic averaging gives
\[
\lim_{\varepsilon\to0^+}
\|f(\cdot/\varepsilon)-\bar f\|_{L^p(\Omega)}^p
=|\Omega|\int_{[0,1]^n}|f(y)-\bar f|^p\,\mathrm{d}y,
\]
which is positive unless $f$ is constant almost everywhere. Thus, an
estimate depending only on this strong norm cannot capture the
small period. Moreover, the Hessian determinant bounds alone do not control the
ellipticity ratio of the coefficient matrix of the linearized operator. Consequently,
classical uniformly elliptic estimates do not directly give bounds
with constants depending only on the Monge--Amp\`ere density bounds.

\medskip

We state an application to the periodic homogenization of the Monge--Amp\`ere equation in all dimensions $n\geq2$.
For $n\geq3$, the global H\"older estimate needed below is provided by
Theorem~\ref{thm:global-finite-Lq}; in dimension two, we use
\cite[Theorem~1.2]{Le20} (see also \cite[Theorem 15.9]{Le24}). Along the density homotopy, the boundary datum
is fixed, so its derivative has zero boundary trace. 
The density-homotopy argument gives the following global estimate.

\begin{theorem}[Linear rate in periodic homogenization]\label{cor:MA-homogenization}
Let  $\Omega\subset\R^n$ ($n\geq2$) be a bounded smooth uniformly
convex domain. Let $\varphi\in C^3(\partial\Omega)$, and let
$f\in L^\infty(\mathbb T^n)$ satisfy \eqref{eq:hom-intro-density}.
Let $u_\varepsilon$ and $\bar u$ solve \eqref{eq:hom-intro-problems}.
There exist $\gamma\in(0,1)$ and $C>0$, depending only on
$n,\lambda,\Lambda,\Omega$, and $\|\varphi\|_{C^3(\partial\Omega)}$,
such that, for every $\varepsilon>0$,
\begin{equation}\label{eq:hom-intro-uniform}
 \|u_\varepsilon-\bar u\|_{L^\infty(\Omega)}
 \leq C\varepsilon\|f-\bar f\|_{L^\infty(\mathbb T^n)}.
\end{equation}
Moreover,
\begin{equation}\label{eq:hom-intro-holder}
 [u_\varepsilon-\bar u]_{C^\gamma(\overline\Omega)}
 \leq C\varepsilon\|f-\bar f\|_{L^\infty(\mathbb T^n)}.
\end{equation}
\end{theorem}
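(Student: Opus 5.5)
The plan is a density homotopy combined with the global Hölder estimate applied to the derivative along the homotopy. Write $f_\varepsilon(x)=f(x/\varepsilon)$. We first reduce to smooth data by mollifying $f$ on the torus; this preserves the bounds \eqref{eq:hom-intro-density}, the mean $\bar f$, and the bound $\|f-\bar f\|_{L^\infty}$. We then pass to the limit at the end using stability of Aleksandrov solutions under weak$^*$ convergence of densities.

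Step 1: the homotopy. For $s\in[0,1]$ set $f^s=\bar f+s(f_\varepsilon-\bar f)$, so that $\lambda\le f^s\le\Lambda$. Let $u^s$ solve $\det D^2u^s=f^s$ in $\Omega$ with $u^s=\varphi$ on $\partial\Omega$. By the global regularity theory (Caffarelli / Trudinger--Wang; see \cite{Le24}), each $u^s$ is smooth up to the boundary. The map $s\mapsto u^s$ is differentiable, and $w^s=\partial_s u^s$ solves
\[
L_{u^s}w^s=f_\varepsilon-\bar f \quad\text{in }\Omega,\qquad w^s=0\ \text{on }\partial\Omega,
\]
because the boundary datum does not depend on $s$. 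Then $u_\varepsilon-\bar u=\int_0^1 w^s\,ds$, so it suffices to prove the two estimates for each $w^s$ with constants independent of $s$ and independent of the regularization.

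Step 2: divergence form via a periodic corrector. Let $\chi\in W^{2,q}(\mathbb T^n)$ solve $\Delta\chi=f-\bar f$ on $\mathbb T^n$; this is solvable because $f-\bar f$ has zero mean. By Calderón--Zygmund, $\|D\chi\|_{L^\infty}\le C\|f-\bar f\|_{L^\infty}$, using $W^{2,q}\subset C^{1}$ for $q>n$. Define $\F(x)=\varepsilon (D\chi)(x/\varepsilon)$. Then
\[
\diver\F=(\Delta\chi)(x/\varepsilon)=f_\varepsilon-\bar f,\qquad \|\F\|_{L^\infty(\Omega)}\le C\varepsilon\|f-\bar f\|_{L^\infty}.
\]
So $L_{u^s}w^s=\diver\F$ with zero boundary data.

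Step 3: apply the global estimate. For $n\ge3$ we invoke Theorem~\ref{thm:global-finite-Lq} with $\phi=0$ and any fixed $q>q_b$. Since $\|\F\|_{L^q(\Omega)}\le|\Omega|^{1/q}\|\F\|_{L^\infty}$, this gives
\[
\|w^s\|_{C^\gamma(\overline\Omega)}\le C\varepsilon\|f-\bar f\|_{L^\infty}.
\]
The hypotheses of that theorem are uniform in $s$:
\begin{itemize}
\item $\Omega$ is smooth and uniformly convex;
\item $u^s|_{\partial\Omega}=\varphi$ is $C^3$;
\item $u^s\in C^{1,1}(\overline\Omega)$ holds qualitatively for smooth data;
\item the constant depends only on $n,\lambda,\Lambda,q,\|\varphi\|_{C^3},\Omega$, and in particular not on the Hessian bound (remark (ii)).
\end{itemize}
For $n=2$ we use \cite[Theorem~1.2]{Le20} in the same way. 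Integrating in $s$ yields both \eqref{eq:hom-intro-uniform} and \eqref{eq:hom-intro-holder}, the $L^\infty$ bound being part of the $C^\gamma$ norm.

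The main obstacle is the justification in Steps 1 and 3 for merely bounded $f$. The global $C^{1,1}$/$C^3$ regularity of $u^s$ needed for differentiability in $s$ and for applying Theorem~\ref{thm:global-finite-Lq} requires smooth densities. I therefore carry out the argument for mollified $f$, with mollification on the torus at a fixed scale relative to the period, which keeps the estimates uniform because they depend only on $\lambda,\Lambda$ and $\|f-\bar f\|_\infty$. I then pass to the limit: $u_\varepsilon^{(k)}\to u_\varepsilon$ uniformly by Aleksandrov stability, and Hölder seminorm bounds are preserved under uniform convergence. Differentiability of $s\mapsto u^s$ in the smooth setting follows from the implicit function theorem in $C^{2,\alpha}(\overline\Omega)$, since $L_{u^s}$ with Dirichlet data is invertible there.
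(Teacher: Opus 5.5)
Your proposal is correct and is essentially the paper's argument: the paper routes it through the negative-norm stability Theorem~\ref{thm:MA-stability}, which uses the density homotopy with fixed boundary datum, so that $\partial_t u_t$ solves $L_{u_t}\partial_t u_t=\diver\F$ with zero trace, and then Theorem~\ref{thm:global-finite-Lq} for $n\geq 3$ or \cite[Theorem~1.2]{Le20} for $n=2$; it feeds in the same periodic corrector $\F_\varepsilon(x)=\varepsilon D\psi(x/\varepsilon)$ that you use. The only difference is the regularization step: the paper mollifies after a dilation of $\Omega$ so that $\diver\F_\delta=f_{1,\delta}-f_{0,\delta}$ holds for an arbitrary bounded $\F$, whereas you mollify $f$ on $\mathbb T^n$, which preserves $\bar f$ (so $\bar u$ is unchanged), the density bounds and the corrector bound, and is slightly simpler but specific to the periodic setting.
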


The above estimates give a linear rate in the uniformly nondegenerate class of Monge--Amp\`ere equations
on a fixed smooth uniformly convex domain. These assumptions are
stronger than the domain assumptions in \cite{CL04}, whose normalized
estimate also does not require a fixed positive lower density bound; see also Remark \ref{rkJLTT}.
We do not claim that the linear rate is optimal. Nevertheless, this result is reminiscent of the recent work of Tran and Yu \cite{TY} on the periodic homogenization of convex Hamilton–Jacobi equations, where the optimal rate of convergence was shown to be linear. The global
H\"older linear rate in \eqref{eq:hom-intro-holder} follows from the global
nonlinear stability in Theorem~\ref{thm:MA-stability}; in particular,
it is not obtained by interpolating \eqref{eq:hom-intro-uniform} with a
bound independent of $\varepsilon$, which would generally lose a power of
$\varepsilon$.

\medskip

In fact, the periodic homogenization follows from a global nonlinear stability
estimate with a negative norm for the Monge--Amp\`ere equation in Theorem \ref{thm:MA-stability} below that does not require periodicity. 

\medskip

For $f\in L^\infty(\Omega)$, set
\begin{equation}
\label{hdiv}
 \|f\|_{\mathrm{div},\infty;\Omega}
 :=\inf\left\{\|\F\|_{L^\infty(\Omega)}:
       \F\in L^\infty(\Omega;\R^n),\quad
       \diver\F=f\ \text{in }\mathcal D'(\Omega)\right\}.
\end{equation}
In the set of competitors, no boundary condition is imposed on $\F$. The infimum is finite:
extend $f$ by zero to a box containing $\Omega$ and integrate in one
coordinate to obtain a bounded vector field with divergence $f$. This simple argument also shows that
\[ \|f\|_{\mathrm{div},\infty;\Omega} \leq C(\Omega) \|f\|_{L^\infty(\Omega)}.\]

\begin{theorem}[Nonlinear stability
for the Monge--Amp\`ere equation]
\label{thm:MA-stability}
 Let $\Omega\subset\R^n$ $(n\geq 2)$ be a 
smooth uniformly convex domain.  Let
$0<\lambda\leq\Lambda<\infty$ and $\varphi\in C^3(\partial\Omega)$. Let  $f_0,f_1\in L^\infty(\Omega)$ satisfy
$\lambda\leq f_i\leq\Lambda$, and let 
$u_i\in C(\overline{\Omega})$ ($i=0, 1$)
be the convex Aleksandrov solution of
\[
 \det D^2u_i=f_i\quad\text{in }\Omega,
 \qquad u_i=\varphi\quad\text{on }\partial\Omega.
\]
Then, there exist $\gamma\in(0,1)$ and $C>0$, depending only on
$n,\lambda,\Lambda,\Omega$, and $\|\varphi\|_{C^3(\partial\Omega)}$,
such that
\begin{equation}\label{eq:MA-stability}
 \|u_1-u_0\|_{C^\gamma(\overline\Omega)}
 \leq C\|f_1-f_0\|_{\mathrm{div},\infty;\Omega}.
\end{equation}
\end{theorem}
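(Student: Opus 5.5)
Our plan is to prove Theorem~\ref{thm:MA-stability} by a density homotopy. We reduce first to smooth data and then remove smoothness by approximation. Fix a competitor $\F\in L^\infty(\Omega;\R^n)$ with $\diver\F=f_1-f_0$. Mollify $f_0,f_1$ to obtain smooth densities with $\lambda\le f_i^\delta\le\Lambda$, and set $\F^\delta=\F*\rho_\delta$ on a slightly shrunken domain. To keep $\diver\F^\delta=f_1^\delta-f_0^\delta$ exactly up to $\partial\Omega$, we instead mollify $f_1-f_0$ and $\F$ together after extending them suitably. Alternatively, we can work on an exhausting family of smooth uniformly convex subdomains with the boundary datum fixed through a $C^3$ extension of $\varphi$.

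For $t\in[0,1]$ let $f_t=(1-t)f_0+tf_1$, so that $\lambda\le f_t\le\Lambda$, and let $u_t$ solve $\det D^2u_t=f_t$ with $u_t=\varphi$ on $\partial\Omega$. For smooth positive $f_t$ and smooth data on a smooth uniformly convex domain, Caffarelli--Nirenberg--Spruck theory gives $u_t\in C^{3}(\overline\Omega)$, in particular $u_t\in C^{1,1}(\overline\Omega)$. The implicit function theorem shows that $t\mapsto u_t$ is differentiable, and $w_t:=\partial_t u_t$ satisfies
\[
 U_t^{ij}D_{ij}w_t=\diver\big(U_tDw_t\big)=f_1-f_0=\diver\F\quad\text{in }\Omega,\qquad w_t=0\quad\text{on }\partial\Omega,
\]
where $U_t=\cof D^2u_t$.

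Next we apply the global estimates to each $w_t$. For $n\ge3$, Theorem~\ref{thm:global-finite-Lq} with $\phi=0$ and $q=\infty$ (since $\F\in L^\infty\subset L^q$ on the bounded domain) gives
\[
\|w_t\|_{C^\gamma(\overline\Omega)}\le C\|\F\|_{L^\infty(\Omega)}.
\]
Here $C$ and $\gamma$ depend only on $n,\lambda,\Lambda$, the quantity $\|u_t\|_{C^3(\partial\Omega)}=\|\varphi\|_{C^3(\partial\Omega)}$, $R$, and $\partial\Omega$. The crucial point is that they do not depend on the $C^{1,1}$ norm of $u_t$, which may blow up as $\delta\to0$. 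For $n=2$ we use \cite[Theorem~1.2]{Le20} in the same way. Integrating in $t$ gives $u_1-u_0=\int_0^1w_t\,dt$, and therefore
\[
\|u_1-u_0\|_{C^\gamma(\overline\Omega)}\le C\|\F\|_{L^\infty}.
\]

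Finally, we pass $\delta\to0$. The Aleksandrov solutions with mollified densities converge uniformly to $u_i$ by stability of Monge--Amp\`ere measures, and the H\"older seminorm bound is preserved under uniform limits. Taking the infimum over $\F$ then yields \eqref{eq:MA-stability}.

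The main obstacle is the approximation step: keeping the divergence constraint compatible with the mollification while retaining enough boundary regularity for Theorem~\ref{thm:global-finite-Lq}. The clean fix we would try first is to mollify only the densities and to write
\[
f_1^\delta-f_0^\delta=\diver(\F*\rho_\delta)+r_\delta
\]
on $\Omega$, with $\F$ extended by zero. Here the error $r_\delta$ is supported near $\partial\Omega$, and we would control it through a bounded vector field of small $L^q$ norm for some $q>q_b$: a vector field supported in a boundary layer of width $\delta$ with bounded size has $L^q$ norm $O(\delta^{1/q})$. This is where using the $L^q$ (rather than $L^\infty$) version of Theorem~\ref{thm:global-finite-Lq} is essential. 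The extension by zero creates a boundary term $\F\cdot\nu$ in the distributional divergence. We therefore intend to avoid it by keeping the boundary strip where the convolution sees the outside of $\Omega$ inside the error term and never sampling $\F$ outside $\Omega$.
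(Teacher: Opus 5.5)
Your homotopy argument for smooth densities matches the paper's Step~1 and is fine. The gap is the reduction to smooth data. You correctly flag it as the main obstacle, but you do not carry it out, and the route you favor hides the crux.

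Extend $f_0,f_1$ by a common constant and $\F$ by zero. The error $r_\delta=f_1^\delta-f_0^\delta-\diver(\F*\rho_\delta)$ does vanish at distance $>\delta$ from $\partial\Omega$, because $\diver\F=f_1-f_0$ in $\Omega$. On the strip, however, you only have $|r_\delta|\le C\|\F\|_{L^\infty}\delta^{-1}+(\Lambda-\lambda)$. It is essentially a mollified copy of the boundary flux $\F\cdot\nu\,\mathcal H^{n-1}$, and its $L^1(\Omega)$ norm does not tend to zero in general. So Theorem~\ref{thm:global-finite-Lq} cannot absorb $r_\delta$ until you produce $G_\delta$ with three properties: $\diver G_\delta=r_\delta$ in $\Omega$, $\|G_\delta\|_{L^\infty}\le C\|\F\|_{L^\infty}+o(1)$, and support in the strip. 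Your remark that a bounded field in a layer of width $\delta$ has $L^q$ norm $O(\delta^{1/q})$ presupposes exactly that such a field exists.

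The construction can be done: integrate $r_\delta$ along inward normals starting from the inner edge of the strip, so the width $\delta$ cancels the factor $\delta^{-1}$. But this is the real content of the step, and it is missing. Once such a bounded $G_\delta$ exists, the $L^\infty$ bound already suffices, so the $L^q$ refinement is not what rescues the argument. Your closing sentence about ``never sampling $\F$ outside $\Omega$'' also contradicts convolving the zero extension. The exhausting-subdomain alternative is only gestured at; it changes both the domain and the boundary datum, and both would then have to be controlled.

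The paper avoids all of this by precomposing the mollification with a dilation toward an interior point. Translate so that $B_r(0)\subset\Omega$, set $a_\delta=1-\delta$, and define $f_{i,\delta}(x)=(f_i*\eta_{\delta r/4})(a_\delta x)$ and $\F_\delta(x)=a_\delta^{-1}(\F*\eta_{\delta r/4})(a_\delta x)$. Convexity gives $a_\delta\overline\Omega+B_{\delta r}(0)\subset\Omega$, so the convolutions sample only points of $\Omega$. Consequently:
\begin{itemize}
\item $\diver\F_\delta=f_{1,\delta}-f_{0,\delta}$ holds exactly near $\overline\Omega$, so there is no error term;
\item $\lambda\le f_{i,\delta}\le\Lambda$ and $\|\F_\delta\|_{L^\infty}\le a_\delta^{-1}\|\F\|_{L^\infty}$;
\item $f_{i,\delta}\to f_i$ in $L^1(\Omega)$, with $\Omega$ and $\varphi$ untouched.
\end{itemize}

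Two smaller corrections. First, $\varphi$ is only $C^3$, so the global regularity to cite is Trudinger--Wang's $C^{2,\sigma}(\overline\Omega)$ theory, as the paper does, not $C^3(\overline\Omega)$ from Caffarelli--Nirenberg--Spruck, which assumes smoother boundary data. In any case $C^{1,1}(\overline\Omega)\cap C^3(\Omega)$ is all Theorem~\ref{thm:global-finite-Lq} needs. Second, that theorem is stated for finite $q>q_b$, so take for example $q=q_b+1$ and use $\|\F\|_{L^q(\Omega)}\le|\Omega|^{1/q}\|\F\|_{L^\infty(\Omega)}$.
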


Theorem~\ref{thm:MA-stability} shows that the solution map $f\mapsto u$ is Lipschitz, with values in
$C^\gamma(\overline\Omega)$, with respect to a norm that measures the
smallness of a bounded divergence primitive rather than the amplitude of
the density difference. Equality of the total masses of $f_0$ and
$f_1$ is not required.

\medskip

For the periodic problem, the relevant smallness is elementary.
The Poisson equation
$\Delta_y\psi=f-\bar f$ on $\mathbb T^n$ has a zero-mean solution for which
$G=D_y\psi$ satisfies
\[
\diver_yG=f-\bar f,
\qquad
\|G\|_{L^\infty(\mathbb T^n)}
\leq C(n)\|f-\bar f\|_{L^\infty(\mathbb T^n)}.
\]
The bound follows from the periodic $W^{2,p}$ estimate with $p>n$
and Sobolev embedding; see \cite[Chapters~7 and~9]{GT01}.
Hence $\F_\varepsilon(x):=\varepsilon G(x/\varepsilon)$ satisfies
\[
f(x/\varepsilon)-\bar f=\diver\F_\varepsilon,
\qquad
\|\F_\varepsilon\|_{L^\infty(\Omega)}
\leq C(n)\varepsilon\|f-\bar f\|_{L^\infty(\mathbb T^n)}.
\]
This is where the small period enters the estimate.

\medskip

To explain the nonlinear step, assume first that the densities $f_0$ and $f_1$ are
smooth. Observe that $w:=u_1-u_0$ satisfies the linear elliptic equation
\begin{equation}
\label{u01w}
\Big(\int_0^1
\cof\big((1-s)D^2u_0+sD^2u_1\big)\,\mathrm{d}s\Big)^{ij}D_{ij}w
=f_1-f_0.
\end{equation}
This averaged coefficient matrix need not be the cofactor Hessian of
a convex potential satisfying the same Hessian determinant bounds as those of $u_0$ and $u_1$. 

\medskip
Instead, we
interpolate the densities and solve
\[
\det D^2u_s=(1-s)f_0+sf_1\quad\text{in }\Omega,
\qquad u_s=\varphi\quad\text{on }\partial\Omega,
\qquad 0\leq s\leq1.
\]
The global $C^{2,\alpha}(\overline{\Omega})$ solvability  and the Implicit Function Theorem
justify differentiation in $s$; see \cite{GT01, TW08}. 
If $\diver\F=f_1-f_0$, then $v_s:=\partial_su_s$ solves
\[
L_{u_s}v_s=\diver\F\quad\text{in }\Omega,
\qquad v_s=0\quad\text{on }\partial\Omega.
\]

The Hessian determinant bounds are preserved along this path.
For $n\geq3$, take $q=q_b+1$ in
Theorem~\ref{thm:global-finite-Lq}. Since $\F\in L^\infty(\Omega)$,
that theorem gives
\[
 \|v_s\|_{C^\gamma(\overline\Omega)}
 \leq C\|\F\|_{L^q(\Omega)}
 \leq C\|\F\|_{L^\infty(\Omega)},
\]
uniformly in $s$. In dimension two, the same global estimate follows
from \cite[Theorem~1.2]{Le20}. Integrating
$u_1-u_0=\int_0^1v_s\,\mathrm{d}s$ and minimizing over $\F$ give the global stability estimate.

\medskip

Only the resulting nonlinear estimate is passed to bounded
measurable densities, using smooth approximations that preserve the
divergence identity. The proof, including this approximation and
Theorem~\ref{cor:MA-homogenization}, is given in
Section~\ref{sec:homogenization}.
\begin{remark} We make several remarks on Theorem \ref{thm:MA-stability}.
\begin{enumerate}
\item[(i)](Negative norm) We emphasize that the main point in the nonlinear stability estimates in Theorem \ref{thm:MA-stability} is the {\it negative norm} $\|f_1-f_0\|_{\mathrm{div},\infty;\Omega}$ on the right-hand side. If this is replaced by the larger quantity $\|f_1-f_0\|_{L^\infty(\Omega)}$, then the resulting estimate follows from the homotopy argument above and  replacing Theorem~\ref{thm:global-finite-Lq} by the global H\"older estimates for the linearized Monge--Amp\`ere equation with $L^n$ right-hand side; see \cite[Theorem 13.2]{Le24}.  
\item[(ii)] ($C^{1,\alpha}$ stability with a small power rate) Under the hypotheses of Theorem \ref{thm:MA-stability}, the functions $u_0, u_1$ are $C^{1,\beta}(\overline{\Omega})$ for some $\beta=\beta(n,\lambda,\Lambda)\in (0, 1)$; see \cite[Theorem 9.5]{Le24}. 
Let $\alpha\in (0,\beta)$. Then,
using the interpolation inequality
\[\|u_1-u_0\|_{C^{1,\alpha}(\overline\Omega)}
\le C(n,\Omega,\alpha,\beta)
\|u_1-u_0\|_{L^\infty(\Omega)}^{\frac{\beta-\alpha}{1+\beta}}
\|u_1-u_0\|_{C^{1,\beta}(\overline\Omega)}^{\frac{1+\alpha}{1+\beta}},\]
and the discussion in (i), we obtain the following $C^{1,\alpha}$ stability with a small power rate:
 \[\|u_1-u_0\|_{C^{1,\alpha}(\overline\Omega)}
\le C(n,\Omega,\alpha,\beta,\lambda,\Lambda, \|\varphi\|_{C^3(\partial\Omega)})
\|f_1-f_0\|_{L^\infty(\Omega)}^{\frac{\beta-\alpha}{1+\beta}}.\]
\item[(iii)] ($C^{2,\beta}$ stability with a linear rate) If we impose additionally $f_0, f_1\in C^{\beta} (\overline{\Omega})$, then, in view of \cite[Theorem 1.1]{TW08}, $u_0, u_1\in C^{2,\beta}(\overline{\Omega})$ . Therefore, the averaged coefficient matrix in \eqref{u01w} is uniformly elliptic and belongs to 
$C^{\beta} (\overline{\Omega})$. Consequently, by the classical Schauder theory, instead of \eqref{eq:MA-stability}, we have a stronger estimate
\begin{equation}\label{stability2}
 \|u_1-u_0\|_{C^{2, \beta}(\overline\Omega)}
 \leq C\|f_1-f_0\|_{C^{\beta} (\overline{\Omega})}.
\end{equation}
\item[(iv)] (Sobolev space stability) For the stability in Sobolev space $W^{2,\gamma}$ where $\gamma>1$, see \cite[Theorem 1.1]{DPF} and \cite[Proposition 3.18]{LN}. 
\end{enumerate}
\end{remark}
\begin{remark}
\label{rkJLTT}
In the preprint \cite{JLTT2} posted right after our paper, Jin, Li, Tran, and Tu  independently obtained the linear convergence rate \eqref{eq:hom-intro-uniform} in Theorem~\ref{cor:MA-homogenization} when the effective solution $\bar u\in C^{2,1}(\overline{\Omega})$; see \cite[Corollary 1.6]{JLTT2}. Their method is quite different from ours and it also gives a higher exponent for $\e$ in the convergence rates when $\bar u$ has better regularity, among other hypotheses.
\end{remark}
%%%%%%%%%%%%%%%%%%%%%%%%%%%%%%%%%%%%%%

\medskip
\paragraph{\bf AI assistance.} The authors have worked on linearized Monge--Amp\`ere equations in divergence form during the last decade \cite{CWZ25, GL26, KLWZ, Le18, Le20, Le24}. The unconditional H\"older estimates previously obtained in this line of work were restricted to dimension two. A main obstacle to extending them to dimensions $n\geq3$ was the lack of a suitable integrability estimate for the gradient of the Green's function of the linearized Monge--Amp\`ere operator.
The argument leading to Proposition~\ref{prop:green}, which provides the missing $L^1$ estimate for the gradient of the Green's function, was first produced in repeated interactions with OpenAI's GPT-6 Astra on September 16, 2026. The authors subsequently checked the argument of Proposition~\ref{prop:green} in detail, verified the proof independently, and take full responsibility for its correctness. This estimate supplied the key missing ingredient for completing the higher-dimensional argument. With it, the authors could complete their research program without further assistance.
 
%%%%%%%%%%%%%%%%%%%%%%%%%%%%%%%%%%%%%%

\medskip

The rest of the paper is organized as follows. 
We recall properties of sections of solutions to the Monge--Amp\`ere equation and the Green's functions of the linearized Monge--Amp\`{e}re operator in Section \ref{sec_prelim}.
Section~\ref{sec:green} is devoted to the proof of Proposition~\ref{prop:green} and the 
uniform estimate for zero-boundary solutions. The proof of Theorem \ref{thm:main} is completed in Section~\ref{sec:holder} by a splitting argument. We then apply the resulting estimate to the three-dimensional periodic dual semigeostrophic equations in Section~\ref{sec:sg}. The global higher
integrability estimate for the gradient of the Green's function and Theorem \ref{thm:global-finite-Lq} are proved in Section~\ref{sec:global}, while Section~\ref{sec:homogenization} concludes with the negative-norm stability estimate and the periodic homogenization rate.

\section{Preliminaries} 
\label{sec_prelim}

In this section, we recall some definitions and  properties of sections of solutions to the Monge--Amp\`ere equation and the Green's functions of the linearized Monge--Amp\`{e}re operator.

We recall the notion of sections of a convex function.
\begin{defn}[Section] Let $\Omega$ be a bounded convex domain in $\R^n$ $(n\geq 2)$.
	Let $u\in C^1(\Omega)\cap C(\overline{\Omega})$ be a convex function and $h > 0$. 
If $x_0 \in \overline \Omega$, then the \textit{section of $u$ centered at $x_0$ with height $h$} is defined by 
	           \[
	               S_u(x_0, h):= \big\{x \in \overline{\Omega}: u(x) < u(x_0) + Du(x_0)\cdot (x - x_0) + h\big\}.
	           \]
	           In the case of $x_0\in\p\Omega$, we require that $u$ is differentiable at $x_0$.
\end{defn}
Following \cite[Definition 1.3]{GL26}, we define the notion of a proper set as follows.
\begin{defn}[Proper set]
A nonempty open set $V\subset\R^n$  is called {\it proper} if it satisfies an exterior cone condition at every boundary point. Examples include intersections of a bounded convex domain $\Omega\subset\R^n$ with sections of a convex function $u\in C^1(\overline{\Omega})$ or open balls.
\end{defn}
The Green's function of the linearized Monge--Amp\`{e}re operator is defined as follows; see \cite[Chapter 14]{Le24}.
\begin{defn}[Green's function of the linearized Monge--Amp\`{e}re operator] 
\label{Gdefn}
Let $\Omega$ be a bounded convex domain in $\R^n$ and $u\in C^3(\Omega)$ be a convex function satisfying \eqref{eq:det}. Let $V \subset  \Omega$ be open. 
  Let $\delta_y$ be the Dirac measure giving the unit mass to $y$. 
Then, for each $y \in V$, there exists a unique function $g_V(\cdot, y): V \rightarrow[0, \infty]$ with the following properties:
    \begin{enumerate}
        \item[(i)] $g_V(\cdot, y) \in W_0^{1, q}(V) \cap W^{1,2}\left(V \backslash B_r(y)\right)$ for all $q<\frac{n}{n-1}$ and all $r>0$.
        \item[(ii)] $g_V(\cdot, y)$ is a weak solution of
            \[
                    -D_i\left(U^{i j} D_j g_V(\cdot, y)\right)  =\delta_y \quad \text { in } V, \quad
	               g_V(\cdot, y)  =0 \quad \text { on } \p V,
            \]
            that is, denoting $D_jg_V(x, y) = \frac{\p}{\p x_j}g_V(x, y)$, we have
	           \begin{equation}\label{Gdef}
	            \int_V U^{i j} D_j g_V(x, y) D_i \psi(x) \,\mathrm{d}x=\psi(y) \quad \text{ for all } \psi \in C_c^{\infty}(V).
	           \end{equation}
             \end{enumerate}
  
    We call $g_V(\cdot, y)$ the \textit{Green's function} of the linearized Monge--Amp\`{e}re operator $L_u=D_i\left(U^{i j} D_j\right)$ in $V$ with \textit{pole} $y$, and  set $g_V(y, y) = +\infty$.
\end{defn}

\begin{remark}[Representation formula using the Green's function]
\label{RepRk}
 Assume that $V\subset\R^n$ is proper. 
		If $\varphi \in L^n(V)$, then there exists a unique solution $\psi \in W_{\text{loc}}^{2, n}(V) \cap W_0^{1,2}(V) \cap C(\overline{V})$ to
		 	\[
		 		-U^{ij} D_{ij} \psi = \varphi \quad\text{ in } V \quad \text{ and } \psi = 0 \quad\text{ on } \partial V. 
		 	\]
			Use $\psi$ as a test function in \eqref{Gdef}, then (see also \cite[Equation (14.5)]{Le24}):
			\begin{equation}\label{Grep}
				\psi(y)=\int_V g_V(\cdot, y) \varphi \,\mathrm{d}x.
			\end{equation}	
\end{remark}

For interior estimates, we use Definition \ref{Gdefn} and Remark \ref{RepRk} for \(V\Subset\Omega\), while for boundary estimates, we additionally
impose the global assumptions of Section \ref{sec:global}, including \(u\in C^{1,1}(\overline\Omega)\).

\begin{remark} (Volume and interior $C^{1,\alpha}$ estimates)
Let $u \in C(\Omega)$ be a strictly convex function satisfying \eqref{eq:det} in the sense of Aleksandrov, where $\Omega\subset \R^n$ $(n\geq 3)$. Then  $u\in C^{1,\alpha}(\Omega)$ where 
 $\alpha=\alpha(n,\lambda,\Lambda)\in(0,1)$; see \cite[Theorem 5.21]{Le24}.
  For later reference, we set
\begin{equation}\label{eq:exponents}
 a=\frac{n-2}{2},\qquad \beta=\frac{\alpha}{1+\alpha}.
\end{equation}
We use the following volume estimates (see \cite[Lemma 5.6 (i)]{Le24})
\begin{equation}
\label{eq:volume}
 C^{-1} s^{n/2}\leq |S_u(z,s)|\leq C s^{n/2}
       \qquad\text{if }S_u(z,s)\Subset\Omega,
       \end{equation}
   where $C=C(n,\lambda,\Lambda)$, and consequences of \cite[Lemma~5.6(ii) and
Corollary~5.22]{Le24}:
    \begin{equation}   
 [Du]_{C^\alpha(\overline{S_{2h_0}})}\leq M,
       \quad\operatorname{dist}(\overline S_{3h_0/2},\partial S_{2h_0})\geq d_0>0  \quad\text{if }S_{4h_0}=S_u(x_0, 4h_0)\Subset\Omega.
       \label{eq:geometry}
\end{equation}
Here $M$ and $d_0$ depend only on $n,\lambda,\Lambda$, $h_0$ and $\diam(S_{4h_0})$.
Specifically, in \cite{Le24}, Corollary~5.22 can be applied at height $3h_0$
with $\tau=4/3$ and $\delta=2/3$; Lemma~5.6(ii), at height $2h_0$ and
ratio $3/4$, gives $d_0\geq C^{-1} h_0^{n/2}[\diam(S_{4h_0})]^{1-n}$. 
\end{remark}

\medskip

We recall a localization property of superlevel sets of the Green's function of the linearized Monge--Amp\`ere operator  that will be needed in the proof of Proposition~\ref{prop:green}. 
\begin{lemma} Let $u \in C^3(\Omega)$ be a convex function satisfying \eqref{eq:det}, where $\Omega\subset \R^n$ $(n\geq 3)$.
Assume $S_u(z,2h)\Subset \Omega$ where $z\in\Omega$ and $h>0$. Let $g_{S}(\cdot, y)$ be the Green's function of the linearized Monge--Amp\`{e}re operator $D_i(U^{ij} D_j)$ in $S=S_u(z, h)$ with pole $y\in S$. 
\begin{enumerate}
\item[(i)] (Localization of superlevel sets) There are constants
$c_l=c_l(n,\lambda,\Lambda)>0$ and $\tau=\tau(n,\lambda,\Lambda)>1$ such that, for every $y\in S$ and $k\geq\tau h^{-a}$,
\begin{equation}\label{eq:levels}
 \big\{x\in S:g_S(x,y)>k\big\}
 \subset S_u(y,c_l k^{-1/a})\subset S_u(y,2c_l k^{-1/a})\Subset S_u(z,3h/2).
\end{equation}
\item[(ii)] (Truncation identity) Let $y\in S$. Then, for any $k>0$, we have
the truncation identity (see
\cite[(14.23), equivalently (14.71)]{Le24})
\begin{equation}\label{eq:energy}
 \int_{\big\{g_S(\cdot,y)\leq k\big\}}U^{ij}D_i g_S(\cdot,y)D_j g_S(\cdot,y)\dd=k.
\end{equation}
\end{enumerate}
\end{lemma}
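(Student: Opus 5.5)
The plan is to treat the two parts separately. Part (i) comes from a pointwise upper bound for $g_S(\cdot,y)$ away from the pole together with the engulfing geometry of sections. Part (ii) is a testing argument using the truncation $\min\{g_S(\cdot,y),k\}$.

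\medskip

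For (i), I will first invoke the interior pointwise bound for the Green's function in \cite[Chapter~14]{Le24}, as I recall it: there are $C_*=C_*(n,\lambda,\Lambda)$ and $c_*=c_*(n,\lambda,\Lambda)$ such that
\[
 g_S(x,y)\leq C_* t^{-a}\qquad\text{for } x\in S\setminus S_u(y,t),\ 0<t\leq c_*h,
\]
whenever $S_u(z,2h)\Subset\Omega$. I will not re-derive it; recall it is obtained by comparing $g_S(\cdot,y)$, on the complement of $S_u(y,t)$, with its maximum on $\partial S_u(y,t)$ via the maximum principle. That maximum is then controlled by the Caffarelli--Guti\'errez Harnack inequality together with the weak-$L^{n/(n-2)}$ bound on $g_S$, which relies on the volume estimate \eqref{eq:volume}. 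Given this bound, set $t=c_lk^{-1/a}$ with $c_l:=C_*^{1/a}$. Then $x\notin S_u(y,t)$ implies $g_S(x,y)\leq k$, which gives the first inclusion in \eqref{eq:levels}; the middle inclusion is trivial. The restriction $k\geq\tau h^{-a}$ forces $2c_lk^{-1/a}\leq 2c_l\tau^{-1/a}h$, which is as small a multiple of $h$ as we like once $\tau$ is large, and in particular ensures $t\leq c_*h$.

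For the last inclusion, I will use the inclusion/engulfing property of sections (\cite[Chapter~5]{Le24}). If $y\in S_u(z,h)$ and $S_u(z,2h)\Subset\Omega$, then $S_u(y,\delta h)\subset S_u(z,(1+C\delta^{\mu})h)$ for small $\delta$ and some $\mu=\mu(n,\lambda,\Lambda)>0$; this can alternatively be derived from the $C^{1,\alpha}$ control of $Du$ on normalized sections. Choosing $\tau$ so that $C(2c_l\tau^{-1/a})^{\mu}<1/4$ gives $\overline{S_u(y,2c_lk^{-1/a})}\subset S_u(z,5h/4)\Subset S_u(z,3h/2)$. I expect this geometric step, making the engulfing constants uniform as $y$ approaches $\partial S$, to be the main obstacle. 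The point is that the pointwise bound and the engulfing must be applied in the larger section $S_u(z,2h)$, not in $S$ itself, so that poles near $\partial S$ cause no loss.

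\medskip

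For (ii), write $g=g_S(\cdot,y)$ and formally test \eqref{Gdef} with $\psi=\min\{g,k\}$. Near $y$ this function equals $k$, since $g\to\infty$ at the pole by the lower bound near the pole in \cite[Chapter~14]{Le24}. By (i) of Definition~\ref{Gdefn} it lies in $W^{1,2}_0(S)$, since it is bounded and $\{g<k\}$ avoids a ball around $y$. Moreover $D\psi=Dg\,\chi_{\{g<k\}}$, so the left-hand side of \eqref{Gdef} becomes $\int_{\{g\leq k\}}U^{ij}D_igD_jg\dd$, the level set $\{g=k\}$ contributing nothing since $Dg=0$ a.e.\ there. The right-hand side is $\psi(y)=k$.

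To make this rigorous, I will approximate $\psi$ by $C_c^\infty(S)$ functions in $W^{1,2}$ that equal $k$ near $y$: first cut off near $\partial S$ using $\psi\in W^{1,2}_0$, then mollify away from $y$. I will pass to the limit using $U\in L^\infty_{\loc}$ together with $Dg\in L^2$ on the support of $D\psi$. Alternatively, I can simply quote \cite[(14.23)]{Le24}.
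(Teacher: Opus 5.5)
Your proposal is correct and follows essentially the paper's route: the paper quotes the first inclusion in \eqref{eq:levels} from \cite[Lemma~4.1(i)]{GL26}, which is exactly your pointwise bound for arbitrary poles in $S$ (rather than something stated in \cite[Chapter~14]{Le24}), and obtains the last inclusion from $S_u(y,\eta h)\Subset S_u(z,3h/2)$ for $y\in S$ \cite[Theorem~5.13(iii)]{Le24} after enlarging $\tau$ so that $2c_lk^{-1/a}<\eta h/2$; for this a fixed $\eta$ suffices, so your quantitative engulfing with exponent $\mu$ is more than needed. For (ii), the paper only cites \cite[(14.23)]{Le24}, and your test with $\min\{g,k\}$ is the standard derivation of that identity.
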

\begin{proof} We indicate the derivation of \eqref{eq:levels} for the reader's convenience. 
Indeed, \cite[Theorem~5.13(iii)]{Le24}, applied in $S_u(z,2h)$, gives a
constant $\eta=\eta (n,\lambda,\Lambda)>0$ such that $S_u(y,\eta h)\Subset S_u(z,3h/2)$ for all
$y\in S$. 
 If $k\geq\tau h^{-a}$ where $\tau=\tau(n,\lambda,\Lambda)>1$ is large, then \cite[Lemma~4.1(i)]{GL26} gives the first
inclusion in \eqref{eq:levels} with a constant
$c_l=c_l(n,\lambda,\Lambda)>0$. Increasing $\tau$ ensures that
$2c_l k^{-1/a}<\eta h/2$, and this gives the last inclusion, as desired.
\end{proof}

\begin{remark}[Euclidean gradient versus Monge--Amp\`ere gradient] Let $u \in C^3(\Omega)$ be a convex function satisfying $\det D^2 u\geq\lambda>0$.
By \cite[Lemma~2.58(iii)]{Le24}, we have for all functions $v$,
\begin{equation}
\label{DgDelta}
|Dv|^2\leq \lambda^{-1}(\Delta u) U^{ij}D_i vD_j v\equiv \lambda^{-1}(\Delta u) (\det D^2 u) |(D^2 u)^{-1/2} Dv|^2.
\end{equation}
\end{remark}
\section{The Green's function gradient estimate}\label{sec:green}
In this section, we prove Proposition~\ref{prop:green} and its extension to higher integrability. Unless otherwise indicated, we assume $n\geq3$, \eqref{eq:det} and \eqref{eq:outer}, so
\[0<\lambda\leq\det D^2u\leq\Lambda\qquad\text{in }\Omega\subset\R^n,\quad S_{4h_0}=S_u(x_0, 4h_0)\Subset\Omega.\]

We let $g_S(\cdot, y)$ be the Green's function of the linearized Monge--Amp\`{e}re operator $L_u=D_i(U^{i j} D_j)$ in $S\subset\Omega$ with pole $y\in S$.

\subsection{Integrability of the Green gradient }
The following elementary consequence of \eqref{eq:geometry} provides the
bound of the mass of the Hessian used in the Green's function estimate.

\begin{lemma}[Hessian mass on a section]
\label{lem:mass}
Assume $n\geq3$, \eqref{eq:det} and \eqref{eq:outer}. 
For every $S_u(z,s)\Subset S_{3h_0/2}$, we have
\begin{equation}\label{eq:mass}
 \int_{S_u(z,s)}\Delta u\dd\leq C s^{a+2\beta},
\end{equation}
where $a$ and $\beta$ are defined as in \eqref{eq:exponents}, and $C=C(n,\lambda,\Lambda, h_0, \diam(S_{4h_0}))$.
\end{lemma}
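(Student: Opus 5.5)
The plan is to integrate the Laplacian by parts via the divergence theorem and control the boundary flux using the $C^{1,\alpha}$ bound \eqref{eq:geometry} together with the geometry of sections. Set $\ell(x):=u(z)+Du(z)\cdot(x-z)$ and $w:=u-\ell$. Then $w\ge 0$, $w<s$ on $S:=S_u(z,s)$, and $w=s$ on $\partial S$. Since $\Delta w=\Delta u$ and $S$ is convex, we get
\[
\int_S\Delta u\dd=\int_{\partial S}Dw\cdot\nu\dH=\int_{\partial S}|Dw|\dH ,
\]
because $Dw$ points along the outward normal on the level set $\{w=s\}$ (by convexity, $Dw\cdot\nu=|Dw|\geq 0$). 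Strictly, $\partial S$ is only a level set of a $C^1$ convex function, but $u\in C^3$ in the interior, so the divergence theorem applies; alternatively one can integrate on slightly smaller sublevel sets and pass to the limit. This reduces the estimate to two bounds: one for $\sup_{\partial S}|Dw|$ and one for $\mathcal H^{n-1}(\partial S)$.

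\emph{Gradient bound.} Since $S\subset S_{3h_0/2}\subset S_{2h_0}$ and $Dw(z)=0$, \eqref{eq:geometry} gives $|Dw(x)|\le M|x-z|^\alpha\le M\,\diam(S)^\alpha$. We also use the standard convexity estimate for the sup of the gradient of a convex function on a convex set. For $x\in\partial S$ and a unit vector $e$, convexity gives $Dw(x)\cdot e\le (w(x+te)-w(x))/t$ as long as $x+te$ stays in the region where $w\le 2s$. Combining this with the H\"older bound, $w(x)-w(x')\le M|x-x'|^{1+\alpha}$, and optimizing should give
\[
|Dw|\lesssim s/\operatorname{dist}(\partial S,\partial S_u(z,2s))\quad\text{and}\quad |Dw|\lesssim M\,\diam(S)^\alpha .
\]

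\emph{Diameter bound.} From the $C^{1,\alpha}$ bound and the volume estimate one obtains the known engulfing-type consequence $S_u(z,s)\supset B_{c(s/M)^{1/(1+\alpha)}}(z)$. Conversely, the bound $|S_u(z,s)|\le Cs^{n/2}$ from \eqref{eq:volume} combined with this inner ball controls the section's John ellipsoid. Its axes $r_1\le\dots\le r_n$ satisfy $\prod r_i\sim s^{n/2}$ and $r_1\gtrsim s^{1/(1+\alpha)}$, which forces $r_n\lesssim s^{n/2-(n-1)/(1+\alpha)}$.

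\emph{Combining.} The main obstacle is getting the exact exponent $a+2\beta$ from these ingredients. I would use the John ellipsoid and the surface area bound for convex sets, $\mathcal H^{n-1}(\partial S)\lesssim \prod_{i\ge2}r_i\sim s^{n/2}/r_1$, together with $|Dw|\lesssim s/r_1$. The latter holds because the distance from $\partial S$ to $\partial S_u(z,2s)$ is comparable to the smallest axis, since $S_u(z,2s)$ is a dilate-like set of comparable shape. This gives
\[
\int_S\Delta u\lesssim s^{n/2+1}r_1^{-2}\lesssim s^{n/2+1-2/(1+\alpha)}=s^{a+2-2/(1+\alpha)}=s^{a+2\beta},
\]
which is exactly \eqref{eq:mass}, since $2-\frac{2}{1+\alpha}=\frac{2\alpha}{1+\alpha}=2\beta$.

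So the key steps, in order, are the following.
\begin{enumerate}
\item[(1)] Apply the divergence theorem to reduce to a boundary flux.
\item[(2)] Prove the inner ball bound $r_1\gtrsim s^{1/(1+\alpha)}$ from $[Du]_{C^\alpha}\le M$; for $|x-z|\le r$, $w(x)\le M r^{1+\alpha}<s$.
\item[(3)] Prove the gradient bound $|Dw|\lesssim s/r_1$ on $\partial S$ via convexity and comparability of $S_u(z,s)$ and $S_u(z,2s)$, using the engulfing/doubling properties in \cite[Chapter 5]{Le24} and requiring $S_u(z,2s)$ to lie in $S_{2h_0}$, which follows from $S\Subset S_{3h_0/2}$ and \eqref{eq:geometry}.
\item[(4)] Bound the surface area via the John ellipsoid and \eqref{eq:volume}.
\end{enumerate}
Step (3) is the delicate one. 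If comparability of $S_u(z,2s)$ to $S_u(z,s)$ is not directly available in the needed form, I would instead integrate over $t\in(s,2s)$ and use the coarea formula,
\[
\int_s^{2s}\int_{\partial S_u(z,t)}|Dw|\dH\,\mathrm{d}t=\int_{S_u(z,2s)\setminus S_u(z,s)}|Dw|^2\dd .
\]
Since $\int_{S_u(z,t)}\Delta u\dd$ is monotone in $t$, this bounds $s\int_{S}\Delta u$ by $|S_u(z,2s)|\sup|Dw|^2\lesssim s^{n/2}\sup|Dw|^2$. This avoids surface area entirely but still needs the sharp gradient bound $\sup_{S_u(z,2s)}|Dw|\lesssim s/r_1\lesssim s^{\alpha/(1+\alpha)}$. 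That bound follows from convexity with the inner ball of $S_u(z,4s)$ of radius $\gtrsim s^{1/(1+\alpha)}$, giving $|Dw|\le 4s/\text{(that radius)}\lesssim s^{\beta}$, and hence $\int_S\Delta u\lesssim s^{n/2-1+2\beta}=s^{a+2\beta}$. This fallback is cleaner, and I would likely adopt it as the main route.
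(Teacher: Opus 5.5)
Your reduction is sound. The divergence theorem on $S$, the inner ball $B_{cs^{1/(1+\alpha)}}(z)\subset S$, the John-ellipsoid bound $\mathcal H^{n-1}(\partial S)\lesssim|S|/r_1$, and the coarea/monotonicity inequality $s\int_S\Delta u\dd\le\int_{S_u(z,2s)\setminus S}|Dw|^2\dd$ are all correct. Either way of converting the flux would finish once you know $|Dw|\lesssim s^\beta$ on the relevant set.

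The gap is that gradient bound, which is the heart of the lemma. Your justification for it does not work.

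\emph{Fallback route.} A ball $B_\rho(z)\subset S_u(z,4s)$ plus convexity does not control $Dw(x)$ for $x\in S_u(z,2s)$ far from $z$. The supporting-plane inequality at $y=z+\rho\,Dw(x)/|Dw(x)|$ only gives $\rho|Dw(x)|\le 4s+Dw(x)\cdot(x-z)$, and convexity does not bound the last term. For instance, in one variable $w(t)=2s(|t|/L)^N$ has $\{w<4s\}\supset(-L,L)$, but $w'(t)\approx 2sN/L$ as $t\to L^-$.

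\emph{What is actually needed.} You need a ball of radius $\gtrsim s^{1/(1+\alpha)}$ centered at each such $x$; in the main route, the analogue is $\operatorname{dist}(\partial S,\partial S_u(z,2s))\gtrsim r_1$. Getting this from $w(y)\le w(x)+Dw(x)\cdot(y-x)+M|y-x|^{1+\alpha}$ is circular, because $|Dw(x)|$ appears on the right. Getting it from engulfing, or from Aleksandrov estimates in normalized coordinates, requires the larger sections to lie compactly in $\Omega$ and inside $S_{2h_0}$, where \eqref{eq:geometry} holds.

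\emph{The containment claim is false.} This does not follow from $S_u(z,s)\Subset S_{3h_0/2}$. Take $z=x_0$ and $h_0<s<3h_0/2$. Then $S_u(z,s)\Subset S_{3h_0/2}$, yet $S_u(z,2s)\not\subset S_{2h_0}$. Moreover $S_u(z,4s)$ need not lie in $\Omega$, since only $S_{4h_0}\Subset\Omega$ is assumed.

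\emph{The missing idea: step inward rather than outward.} For $x\in\partial S$ and $e=Dw(x)/|Dw(x)|$, use $w\ge0$ and \eqref{eq:geometry} along the segment $\{x-te:0\le t\le r\}$. This segment lies in $S_{2h_0}$ once $r\le d_0/2$. The choice $r=cs^{1/(1+\alpha)}$ meets this, because $s\le C_1h_0$ by \eqref{eq:volume}. Then
\[
0\le w(x-re)\le s-r|Dw(x)|+\frac{M}{1+\alpha}r^{1+\alpha},
\]
so $|Dw(x)|\le s/r+Mr^\alpha\le Cs^\beta$. No engulfing and no larger section are needed.

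\emph{How the argument then closes.} With this bound $m_s$, your main route gives $\int_S\Delta u\dd\le\mathcal H^{n-1}(\partial S)\,m_s\lesssim (s^{n/2}/r_1)\,s^\beta\lesssim s^{a+2\beta}$. Your coarea route also closes if you integrate over $t\in(s,(1+\epsilon)s)$ instead of $(s,2s)$. Choose $\epsilon$ small in terms of $d_0$ and $\diam(S_{4h_0})$, so that $S_u(z,(1+\epsilon)s)\subset z+(1+\epsilon)(S-z)$ stays well inside $S_{2h_0}$; then apply the inward-step bound there.

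\emph{The paper's shortcut.} The paper avoids both the surface-area bound and any larger section. On $\partial S$, convexity gives $Dw(x)\cdot(x-z)\ge w(x)-w(z)=s$. Hence $s\int_{\partial S}|Dw|\dH\le\int_{\partial S}|Dw|^2(x-z)\cdot\nu\dH\le n|S|m_s^2$.
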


\begin{proof} Let $S:=S_u(z,s)$. By subtracting $u(z)+Du(z)\cdot(x-z)$ from $u$, the claimed estimate remains the same, but we can now assume $u\geq0$, $u(z)=0$, and $u=s$ on $\partial S$.

For $x\in\partial S$, set $e=Du(x)/|Du(x)|$ and
$r=c s^{1/(1+\alpha)}$. The gradient does not vanish on this positive
level set. From $S_u(z,s)\Subset S_{3h_0/2}$ and \eqref{eq:volume}, we have $s\leq C_1 h_0$ where $C_1=C_1(n,\lambda,\Lambda)>0$. Thus, there exists \[c=c(n,\lambda,\Lambda, h_0,  \diam(S_{4h_0}))>0\] such that
$r\leq d_0/2$, so the segment $\{x-te: 0\leq t\leq r\}\subset S_{2h_0}$. By \eqref{eq:geometry},
\[
 0\leq u(x-re)\leq s-r|Du(x)|+\frac{M}{1+\alpha}r^{1+\alpha}.
\]
Consequently, we can find $C_2= C_2(n,\lambda,\Lambda, h_0, \diam(S_{4h_0}))$ such that
\begin{equation}\label{eq:m}
 m_s:=\sup_{\partial S}|Du|\leq C_2 s^\beta.
\end{equation}
On $\partial S$, the outward normal is $\nu=Du/|Du|$, and convexity gives
$Du(x)\cdot(x-z)\geq s$. The divergence theorem therefore yields
\[
 s\int_S\Delta u\dd
 =s\int_{\partial S}|Du|\dH
 \leq\int_{\partial S}|Du|^2(x-z)\cdot\nu\dH
 \leq n|S|m_s^2.
\]
Combining this with \eqref{eq:volume} and \eqref{eq:m} proves
\eqref{eq:mass}, since $n/2-1+2\beta=a+2\beta$.
\end{proof}

\medskip

We now establish the $L^1$ estimate for the gradient of the Green's function.
\begin{proof}[Proof of Proposition~\ref{prop:green}]
In this proof, the constants $C$ depend only on $n,\lambda,\Lambda$, $h_0$, and $\diam(S_{4h_0})$.
Fix $y\in S$.  Then, from \eqref{DgDelta}, for every measurable
$E\subset S$,
\begin{equation}\label{eq:pair}
 \int_E|Dg|\dd\leq\lambda^{-1/2}
 \left(\int_E U^{ij}D_i gD_j g\dd\right)^{1/2}
 \left(\int_E\Delta u\dd\right)^{1/2}.
\end{equation}
Let $\tau$ be as in \eqref{eq:levels} and $T=\tau h^{-a}$. Equations
\eqref{eq:energy} and \eqref{eq:mass} imply
\begin{equation}\label{eq:low}
 \int_{\{g\leq T\}}|Dg|\dd
 \leq CT^{1/2}h^{(a+2\beta)/2}\leq Ch^\beta.
\end{equation}
For $j\geq0$, let $E_j=\{2^jT<g\leq2^{j+1}T\}$. Then by \eqref{eq:mass}, \eqref{eq:levels} and \eqref{eq:energy}, we have
\[
 \int_{E_j}U^{ik}D_i gD_k g\dd\leq2^{j+1}T,
 \qquad
 \int_{E_j}\Delta u\dd
 \leq\int_{S_u(y,c_l(2^jT)^{-1/a})}\Delta u\dd
 \leq C(2^jT)^{-(a+2\beta)/a}.
\]
The last two inequalities also use $\Delta u\geq0$. By \eqref{eq:pair},
\[
 \int_{E_j}|Dg|\dd\leq C(2^jT)^{-\beta/a}
 \leq Ch^\beta 2^{-j\beta/a}.
\]
Since $a,\beta>0$, summing over $j$ and adding \eqref{eq:low} proves
\eqref{eq:green}, with a constant independent of $y$. The proposition is proved.
\end{proof}

We show that solutions to the linearized Monge--Amp\`ere equation with right-hand side being the divergence of a bounded vector field and vanishing on the boundary of a section decay to zero when the section shrinks to a point.
\begin{corollary}\label{cor:zero}
Assume $n\geq3$, \eqref{eq:det} and \eqref{eq:outer}. 
Let $S=S_u(z,h)$ with $S_u(z,2h)\Subset S_{3h_0/2}$, and let
$\F\in L^\infty(S;\R^n)$. The weak solution $w\in W_0^{1,2}(S)$ of
\[
 L_uw=\diver\F\qquad\text{in }S,\quad w=0\qquad\text{on }\p S
\]
satisfies
\begin{equation}\label{eq:zero}
 \norm{w}_{L^\infty(S)}\leq Ch^\beta\norm{\F}_{L^\infty(S)},
 \qquad C=C(n,\lambda,\Lambda, h_0, \diam(S_{4h_0})).
\end{equation}
\end{corollary}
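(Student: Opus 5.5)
The plan is to obtain \eqref{eq:zero} from a Green's function representation of $w$, combined with the uniform $L^1$ bound of Proposition~\ref{prop:green}. Formally, testing the equation $-D_i(U^{ij}D_j g_S(\cdot,y))=\delta_y$ against $w$ and the equation for $w$ against $g_S(\cdot,y)$, and using the symmetry of $U$, gives
\[
 w(y)=\int_S \F(x)\cdot D_x g_S(x,y)\dd .
\]
Then Proposition~\ref{prop:green} yields $|w(y)|\leq \norm{\F}_{L^\infty(S)}\int_S|D_xg_S(x,y)|\dd\leq Ch^\beta\norm{\F}_{L^\infty(S)}$ for every $y\in S$. This bound is uniform in the pole, which is exactly what \eqref{eq:green} provides.

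Justifying the representation formula is the main point, since $g_S(\cdot,y)$ is only in $W^{1,q}_0$ with $q<n/(n-1)$ and $\F$ is merely bounded. I will use a regularization and duality argument.
\begin{itemize}
\item[(1)] Mollify $\F$ to $\F_m\in C_c^\infty(S)$. Take $\F_m\to\F$ a.e. and in $L^2(S)$, with $\norm{\F_m}_{L^\infty}\leq\norm{\F}_{L^\infty}$; this is possible by first cutting off near $\partial S$.
\item[(2)] Let $w_m\in W^{1,2}_0(S)$ solve $L_uw_m=\diver\F_m$. Because $U$ is locally uniformly elliptic on $\overline S\subset S_{2h}\Subset\Omega$ (since $u\in C^3$), the energy estimate gives $w_m\to w$ in $W^{1,2}_0(S)$. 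Hence, along a subsequence, $w_m\to w$ a.e.
\item[(3)] Since $\diver \F_m\in C_c^\infty(S)\subset L^n$, Remark~\ref{RepRk} applies with $\varphi=-\diver\F_m$. With $\psi=w_m$ (uniqueness identifies the two solutions), it gives $w_m(y)=-\int_S g_S(\cdot,y)\,\diver\F_m\dd$.
\item[(4)] Since $g_S(\cdot,y)\in W^{1,1}_0(S)$ and $\F_m$ is smooth with compact support, integrate by parts to get $w_m(y)=\int_S \F_m\cdot D_xg_S(\cdot,y)\dd$.
\item[(5)] Then $|w_m(y)|\leq Ch^\beta\norm{\F}_{L^\infty}$ for all $y$, and passing to the a.e. limit gives \eqref{eq:zero}.
\end{itemize}

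The only delicate points are the sign conventions in step (3) and the identification $w_m=\psi$ from the uniqueness in Remark~\ref{RepRk}. One also has to make sure that the $W^{1,2}_0$ convergence in step (2) uses only qualitative local ellipticity, so that no constant depending on it enters the final bound. It does not, because the estimate in step (5) depends only on Proposition~\ref{prop:green}.
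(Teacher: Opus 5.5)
Your proposal is correct and follows the paper's proof: the representation $w(y)=\int_S D_xg_S(x,y)\cdot\F(x)\dd$ for smooth data via \eqref{Grep} and integration by parts, then the bound from Proposition~\ref{prop:green}, then mollified approximants converging in $W^{1,2}_0(S)$ by the energy estimate (which uses only the qualitative ellipticity of $U$ on $\overline S$) and a.e.\ along a subsequence. The only cosmetic difference is that you take approximants in $C_c^\infty(S)$, whereas the paper extends $\F$ by zero and mollifies; in that case the integration by parts relies on the zero trace of $g_S(\cdot,y)\in W^{1,1}_0(S)$ instead of on compact support.
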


\begin{proof}
First, consider the case where $\F$ is a smooth vector field. 
Then, the representation formula \eqref{Grep} can be applied to $w$, 
and integration by parts gives
\[
 w(y)=-\int_S g_S(x,y)\diver\F(x)\dd
      =\int_S D_xg_S(x,y)\cdot\F(x)\dd.\]
Thus, \eqref{eq:zero} follows from Proposition~\ref{prop:green}. 

\medskip

For a general bounded vector field
$\F$, we extend it by zero and mollify to obtain smooth vector fields $\F_k\to\F$ in $L^2(S)$
with $\norm{\F_k}_{L^\infty(S)}\leq\norm{\F}_{L^\infty(S)}$. The corresponding weak
solutions $w_k \in W^{1, 2}_0(S)$ converge to $w$ in $W_0^{1,2}(S)$ by the energy estimate for their
difference. Here, the existence and convergence use the ellipticity of the fixed
smooth matrix $U$ on $\overline S$. Moreover, there is a subsequence $\{w_{k_l}\}$ that converges almost everywhere to $w$. Thus, letting $l\to\infty$ in $\norm{w_{k_l}}_{L^\infty(S)}\leq Ch^\beta\norm{\F_{k_l}}_{L^\infty(S)}$ proves
\eqref{eq:zero}.
\end{proof}

\subsection{Higher integrability of the gradient of the Green's function}\label{sec:green-higher}

The proof of Proposition~\ref{prop:green} deliberately avoids any higher
integrability input for $D^2u$.  If one now invokes the classical interior
$W^{2,1+\varepsilon}$ estimate for the Monge--Amp\`ere equation of De Philippis--Figalli--Savin \cite{DFS13} and,
independently, Schmidt \cite{Sch13}, the same
level-band localization yields a genuine improvement of the integrability of 
the gradient of the Green's function in every dimension $n\geq3$.  
The result of this section is not used in the
proof of Theorem~\ref{thm:main}.

\medskip
We first record a consequence of the interior
$W^{2,1+\varepsilon}$ estimate that will be used below.

\begin{lemma}\label{lem:hessian-higher-mass}
Assume $n\geq3$, \eqref{eq:det} and \eqref{eq:outer}.
There exists $\varepsilon_0=\varepsilon_0(n,\lambda,\Lambda)>0$ such that, for every
$\delta\in(0,\varepsilon_0]$, the following holds.  If $S_u(\xi,2s)\Subset S_{3h_0/2}$, 
then
\begin{equation}\label{eq:hessian-higher-mass}
 \int_{S_u(\xi,s)} (\Delta u)^{1+\delta}\dd
 \leq C_\delta
 s^{a+2\beta-\delta(1-2\beta)},
 \qquad C_\delta=C_\delta (n,\lambda,\Lambda, h_0, \diam(S_{4h_0})).
\end{equation}
\end{lemma}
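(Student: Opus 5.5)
We prove \eqref{eq:hessian-higher-mass} by interpolating between two ingredients. The first is the pointwise bound $\Delta u\leq n|Du|$-type control coming from Lemma~\ref{lem:mass}, which gives the $L^1$ mass $s^{a+2\beta}$. The second is an affine-invariant $W^{2,1+\varepsilon_0}$ estimate of De Philippis--Figalli--Savin/Schmidt on the normalized section, rescaled back.

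The first step is to normalize $S_u(\xi,2s)$. By John's lemma there is an affine map $Tx=A x+b$ with $B_1\subset T(S_u(\xi,2s))\subset B_n$. By the volume estimate \eqref{eq:volume}, $|\det A|\approx s^{-n/2}$. The rescaled function
\[
\tilde u(\tilde x)=(\det A)^{2/n}\,s^{-1}\cdots
\]
more simply $\tilde u(\tilde x):=s^{-1}\big(u(T^{-1}\tilde x)-\ell(T^{-1}\tilde x)\big)$, has determinant comparable to $\lambda,\Lambda$ on a normalized convex set. The interior $W^{2,1+\varepsilon_0}$ estimate \cite{DFS13,Sch13} then gives
\[
\int_{T(S_u(\xi,s))}|D^2\tilde u|^{1+\varepsilon_0}\,\mathrm{d}\tilde x\leq C,
\]
hence, by H\"older, the same bound with exponent $1+\delta$ for $\delta\le\varepsilon_0$.

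Rather than undoing the affine map on the full Hessian, where the eccentricity of $A$ is poorly controlled, the cleaner route is to use the estimate only to get a reverse-H\"older inequality for $\Delta u$, since $D^2u=s\,A^TD^2\tilde u\,A$. Changing variables gives
\[
\fint_{S_u(\xi,s)}|A^{-T}D^2uA^{-1}|^{1+\delta}\le Cs^{1+\delta}.
\]
To pass to $\Delta u$ itself, I would instead invoke the reverse-H\"older form of the $W^{2,1+\varepsilon}$ estimate, which is invariant under this normalization. It reads
\[
\Big(\fint_{S_u(\xi,s)}|D^2u|^{1+\delta}\Big)^{1/(1+\delta)}\le C\fint_{S_u(\xi,2s)}|D^2u|.
\]
This holds because $|D^2 u|$ transforms by a fixed conjugation and $\|M\|\approx\operatorname{tr}M$ for $M\ge0$. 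The conjugation issue is the main obstacle, and I would handle it by applying the estimate to $\operatorname{tr}(D^2u)$ componentwise after a rotation diagonalizing $A^TA$. Each diagonal entry $D_{ii}u$ scales by a constant $a_i^2 s$, so the reverse-H\"older inequality holds for each $D_{ii}u\geq0$ separately, and summing gives it for $\Delta u$.

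With the reverse-H\"older inequality for $\Delta u$ in hand, we apply Lemma~\ref{lem:mass} to $S_u(\xi,2s)\Subset S_{3h_0/2}$ and use $|S_u(\xi,s)|\approx s^{n/2}$ to get
\[
\int_{S_u(\xi,s)}(\Delta u)^{1+\delta}\le C s^{n/2}\Big(s^{-n/2}s^{a+2\beta}\Big)^{1+\delta}.
\]
The exponent is $a+2\beta+\delta(a+2\beta-n/2)=a+2\beta-\delta(1-2\beta)$, since $a-n/2=-1$. This is exactly \eqref{eq:hessian-higher-mass}.
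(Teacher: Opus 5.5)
Your proposal follows the paper's proof: a reverse-H\"older inequality for $\Delta u$ on sections, coming from the De Philippis--Figalli--Savin/Schmidt $W^{2,1+\varepsilon_0}$ estimate, combined with Lemma~\ref{lem:mass}, \eqref{eq:volume} and the same exponent bookkeeping (the paper cites this inequality via Maldonado on a single section and then lowers $\varepsilon_0$ to $\delta$ by H\"older). In your sketch of that reverse-H\"older inequality, the rescaling alone only supplies the upper bound, so you also need the lower bound $\fint D_{ww}\tilde u\geq c(n,\lambda,\Lambda)$ on the normalized section for every unit vector $w$; this follows from convexity of $\tilde u$ along chords through $\{\tilde u<1\}$.
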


\begin{proof} From the interior $W^{2,1+\varepsilon_0}$ estimate  for the Monge--Amp\`ere equation
\cite{DFS13,Sch13}, we have (see also \cite[Equation (1.2) and Lemma 3.1]{Mal14})
for some $C_0=C_0(n,\lambda,\Lambda)$,
\begin{equation}\begin{split}\int_{S_u(\xi,s)} (\Delta u)^{1+\varepsilon_0}\,\mathrm{d}x &\leq C_0 |S_u(\xi,s)|^{-\varepsilon_0} \Big(\int_{S_u(\xi,s)} \Delta u\, \mathrm{d}x\Big)^{1+\varepsilon_0}\\ &\leq C s^{(1+\varepsilon_0) (a+ 2\beta)-\frac{n}{2} \varepsilon_0}\\&= Cs^{a+2\beta-\varepsilon_0(1-2\beta)},\quad C=C(n,\lambda,\Lambda, h_0, \diam(S_{4h_0})),\end{split}\end{equation}
where we used Lemma \ref{lem:mass} and the volume estimates \eqref{eq:volume} for sections. Note that 
\[a+2\beta -\frac{n}{2}=2\beta-1.\]
The claimed estimate \eqref{eq:hessian-higher-mass} holds for $\delta=\varepsilon_0$. For $0<\delta< \varepsilon_0$, an application of the H\"older inequality and the  volume estimates for sections gives the desired result.
\end{proof}

\begin{proposition}[Higher integrability
of the gradient of the Green's function]
\label{prop:green-higher}
Let $n\geq3$, and assume \eqref{eq:det} and \eqref{eq:outer}.  Let
$S=S_u(z,h)$ satisfy $S_u(z,2h)\Subset S_{3h_0/2}$.  Then there exists
$\varepsilon=\varepsilon(n,\lambda,\Lambda)>0$
such that, with
\begin{equation}\label{eq:green-higher-exponent}
 \vartheta=(1+\varepsilon)\beta-\frac n2\varepsilon>0,
\end{equation}
one has
\begin{equation}\label{eq:green-higher}
 \sup_{y\in S}
 \int_S |D_xg_S(x,y)|^{1+\varepsilon}\dd
 \leq C h^\vartheta,
 \qquad C=C(n,\lambda,\Lambda, h_0, \diam(S_{4h_0})).
\end{equation}
In particular, $g_S(\cdot,y)\in W^{1,1+\varepsilon}_0(S)$, uniformly in
the pole.
\end{proposition}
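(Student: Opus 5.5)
The plan is to rerun the level-band argument from the proof of Proposition~\ref{prop:green}. The one change is that the Cauchy--Schwarz pairing \eqref{eq:pair} becomes a H\"older pairing with unequal exponents. This moves the extra integrability onto $\Delta u$, where Lemma~\ref{lem:hessian-higher-mass} supplies it.

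Fix $y\in S$ and write $g=g_S(\cdot,y)$. By \eqref{DgDelta}, $|Dg|^{1+\varepsilon}\le \lambda^{-(1+\varepsilon)/2}(U^{ij}D_igD_jg)^{(1+\varepsilon)/2}(\Delta u)^{(1+\varepsilon)/2}$. Applying H\"older with exponents $\frac{2}{1+\varepsilon}$ and $\frac{2}{1-\varepsilon}$ on a measurable $E\subset S$ gives
\[
\int_E|Dg|^{1+\varepsilon}\dd\le C\Big(\int_E U^{ij}D_igD_jg\dd\Big)^{\frac{1+\varepsilon}{2}}\Big(\int_E(\Delta u)^{1+\delta}\dd\Big)^{\frac{1-\varepsilon}{2}},\qquad 1+\delta=\frac{1+\varepsilon}{1-\varepsilon}.
\]
I choose $\varepsilon$ small enough that two conditions hold. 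First, $\delta=2\varepsilon/(1-\varepsilon)\le\varepsilon_0$, so that Lemma~\ref{lem:hessian-higher-mass} applies. Second, $\vartheta=(1+\varepsilon)\beta-\frac n2\varepsilon>0$; this is possible because $\beta>0$.

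Take $T=\tau h^{-a}$ and the bands $E_j=\{2^jT<g\le 2^{j+1}T\}$ as before. Set $k=2^jT$ and $s=c_lk^{-1/a}$.

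\textbf{Band $E_j$.} By \eqref{eq:energy}, the energy on $E_j$ is at most $2k$. By \eqref{eq:levels}, $E_j\subset S_u(y,s)$ and $S_u(y,2s)\Subset S_u(z,3h/2)\subset S_{3h_0/2}$. Lemma~\ref{lem:hessian-higher-mass} therefore gives $\int_{E_j}(\Delta u)^{1+\delta}\le Cs^{a+2\beta-\delta(1-2\beta)}$. Substituting $k\sim s^{-a}$, the total power of $s$ is
\[
-\tfrac{a(1+\varepsilon)}{2}+\tfrac{1-\varepsilon}{2}(a+2\beta)-\varepsilon(1-2\beta),
\]
using $\frac{1-\varepsilon}{2}\delta=\varepsilon$. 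This simplifies to $\beta(1+\varepsilon)-\varepsilon(a+1)=\vartheta$, since $a+1=n/2$. Hence $\int_{E_j}|Dg|^{1+\varepsilon}\le C(2^jT)^{-\vartheta/a}\le Ch^{\vartheta}2^{-j\vartheta/a}$.

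\textbf{Low part $\{g\le T\}$.} The energy there is at most $T$. I apply Lemma~\ref{lem:hessian-higher-mass} to $S=S_u(z,h)$ itself; this is allowed because $S_u(z,2h)\Subset S_{3h_0/2}$. The same exponent count gives $T^{(1+\varepsilon)/2}h^{\frac{1-\varepsilon}{2}(a+2\beta)-\varepsilon(1-2\beta)}=Ch^{\vartheta}$.

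Summing the geometric series in $j$, which converges because $\vartheta>0$, yields \eqref{eq:green-higher} with a constant independent of $y$.

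For the final claim, recall from Definition~\ref{Gdefn} that $g\in W^{1,q}_0(S)$ for $q<\frac{n}{n-1}$, so $Dg$ is an $L^1$ function. The bound just proved shows $Dg\in L^{1+\varepsilon}$ uniformly in $y$. To get membership in $W^{1,1+\varepsilon}_0(S)$, I use the truncations $\min(g,k)\in W^{1,2}_0(S)$. Their gradients are bounded in $L^{1+\varepsilon}$ by the same estimate, so weak compactness gives $g\in W^{1,1+\varepsilon}_0(S)$.

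The main obstacle is the exponent bookkeeping: one must check that the H\"older split produces exactly $\vartheta$ and that $\vartheta>0$ forces $\delta$ small. Everything else is the same localization as in Proposition~\ref{prop:green}.
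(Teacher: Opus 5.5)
Your proposal is correct and follows the paper's proof essentially step for step: the same H\"older split with exponents $\frac{2}{1+\varepsilon}$, $\frac{2}{1-\varepsilon}$ and $1+\delta=\frac{1+\varepsilon}{1-\varepsilon}$, the same low part $\{g\le T\}$ plus dyadic bands $E_j$ controlled by \eqref{eq:energy}, \eqref{eq:levels} and Lemma~\ref{lem:hessian-higher-mass}, and the same exponent identity producing $\vartheta$. The only difference is in the final membership claim. There you use the truncations $\min(g,k)$, weak compactness and the Poincar\'e inequality in $W^{1,1+\varepsilon}_0$. The paper instead combines \eqref{eq:green-higher} with the $L^q$ estimate for the Green's function from \cite[Theorem~14.11]{Le24}. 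Both routes work.
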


\begin{proof} In this proof, the constants $C$ depend only on $n,\lambda,\Lambda$, $h_0$, and $\diam(S_{4h_0})$.
Let $\varepsilon_0$ be given by Lemma~\ref{lem:hessian-higher-mass}.  Choose
\begin{equation}\label{eq:epsilon-choice-green-higher}
 0<\varepsilon=\e(n,\lambda,\Lambda)<
 \min\left\{
 \frac{\varepsilon_0}{2+\varepsilon_0},
 \frac{2\beta}{n-2\beta}
 \right\},
\end{equation}
and set
\begin{equation}\label{eq:delta-from-epsilon}
 \delta=\frac{2\varepsilon}{1-\varepsilon}.
\end{equation}
Then $0<\delta<\varepsilon_0$,
\begin{equation}\label{eq:holder-exponents-green-higher}
 1+\delta=\frac{1+\varepsilon}{1-\varepsilon},
\end{equation}
and \eqref{eq:green-higher-exponent} follows from the second inequality
in \eqref{eq:epsilon-choice-green-higher}.

\medskip

Fix $y\in S$ and write $g:=g_S(\cdot,y)$.  Then,
H\"older's inequality with exponents $2/(1+\varepsilon)$ and
$2/(1-\varepsilon)$ applied to \eqref{DgDelta} gives, for every measurable $E\subset S$,
\begin{equation}\label{eq:pair-eps-green-higher}
 \int_E|Dg|^{1+\varepsilon}\dd
 \leq C_1(n,\lambda,\Lambda)
 \left(\int_E U^{ij}D_i gD_j g\dd\right)^{\frac{1+\varepsilon}{2}}
 \left(\int_E(\Delta u)^{1+\delta}\dd\right)^{\frac{1-\varepsilon}{2}}.
\end{equation}
Let
$T=\tau h^{-a}$.  We recall \eqref{eq:levels}, in particular, 
\begin{equation}\label{eq:doubled-localization-green-higher}
 S_u\bigl(y,2c_l k^{-1/a}\bigr)
 \Subset S_u(z,3h/2)\quad
 \text{for all }\ k\geq T.
\end{equation}

For the sublevel set $\{g\leq T\}$, \eqref{eq:energy},
Lemma~\ref{lem:hessian-higher-mass}, and
\eqref{eq:pair-eps-green-higher} yield
\begin{align*}
 \int_{\{g\leq T\}}|Dg|^{1+\varepsilon}\dd
 \leq C_1 T^{\frac{1+\varepsilon}{2}}
 \left(\int_S(\Delta u)^{1+\delta}\dd\right)^{\frac{1-\varepsilon}{2}}\leq C h^\vartheta.
\end{align*}
Here and below we use $a+1=n/2$ and
\eqref{eq:delta-from-epsilon}; explicitly,
\begin{equation}\label{eq:scaling-identity-green-higher}
 -\frac{a(1+\varepsilon)}2
 +\frac{1-\varepsilon}{2}
 \bigl[a+2\beta-\delta(1-2\beta)\bigr]
 =\vartheta.
\end{equation}

For $j\geq0$, set
\[
 K_j=2^jT,
 \qquad
 E_j=\{K_j<g\leq2K_j\},
 \qquad
 s_j=c_lK_j^{-1/a}.
\]
By \eqref{eq:energy},
\[
 \int_{E_j}U^{ik}D_i gD_k g\dd\leq2K_j.
\]
Moreover, \eqref{eq:levels} and
\eqref{eq:doubled-localization-green-higher} give
\[
 E_j\subset S_u(y,s_j),
 \qquad
 S_u(y,2s_j)\Subset S_{3h_0/2}.
\]
Applying Lemma~\ref{lem:hessian-higher-mass} and
\eqref{eq:pair-eps-green-higher}, and then using
$K_j\approx s_j^{-a}$ (within a positive factor depending only on $n,\lambda$, and $\Lambda$) together with
\eqref{eq:scaling-identity-green-higher}, we obtain
\[
 \int_{E_j}|Dg|^{1+\varepsilon}\dd
 \leq C s_j^\vartheta=C h^\vartheta 2^{-j\vartheta/a}.
\]
\iffalse
since $s_j=c_l\tau^{-1/a}h\,2^{-j/a}$.
it follows that
\[
 \int_{E_j}|Dg|^{1+\varepsilon}\dd
 \leq C h^\vartheta 2^{-j\vartheta/a}.
\]
\fi
Recalling $\vartheta>0$, we find
\begin{equation*}
\begin{split}\int_S |Dg|^{1+\e}\dd &=\int_{\{g\leq T\}}|Dg|^{1+\varepsilon}\dd + \sum_{j=0}^\infty\int_{E_j}|Dg|^{1+\varepsilon}\dd \\&\leq Ch^\vartheta (1+\sum_{j=0}^\infty 2^{-j\vartheta/a}) \leq Ch^\vartheta. 
\end{split}
\end{equation*}
Thus \eqref{eq:green-higher} is established, with a constant independent of
$y$.

Finally, combining \eqref{eq:green-higher} with the $L^q$ estimate for the Green's function in \cite[Theorem 14.11]{Le24}  yields
$g\in W^{1,1+\varepsilon}_0(S)$. The proposition is proved.
\end{proof}

\begin{remark} For the quadratic potential $u(x)=|x|^2/2$, $L_u$ is the Laplace operator and the exponent in $C^{1,\alpha}$ estimate is $\alpha=1$, so $\beta=1/2$. In this case, we can take in Proposition \ref{prop:green-higher}
any $\e\in (0, 1/(n-1))$. This is consistent with the classical integrability result for the gradient of the Green's function of the Laplace operator. 
\end{remark}

\section{Interior H\"older estimate}\label{sec:holder}

We prove interior H\"older estimates for the linearized Monge--Amp\`ere equation in divergence form in this section.
Unless otherwise indicated, we assume $n\geq3$, \eqref{eq:det} and \eqref{eq:outer}. 
\subsection{Bounded vector fields}
For the fixed smooth
 strictly convex potential function $u$, it is known (see the argument in \cite[Remark~14.3]{Le24}) that  $W^{1, 2}$ weak solutions 
to the homogeneous linearized Monge--Amp\`ere equation \[U^{ij}D_{ij} w=0\quad\text{in}\quad S_u(z,r)\Subset\Omega\]
are locally in $W^{2,n}$. We use two
standard estimates.

\medskip

From the Caffarelli--Guti\'errez Harnack inequality
\cite[Theorem~12.4]{Le24}, we have the following oscillation estimate (see \cite[(12.26)]{Le24})
\begin{equation}
\label{eq:homosc}
\osc_{S_u(z,r/2)}w\leq\theta\,\osc_{S_u(z,r)}w,
       \qquad 0<\theta=\theta(n,\lambda,\Lambda)<1.
\end{equation}
Moreover, the
nonnegative subsolution estimate in \cite[Lemma~2.4]{CWZ25}, applied to
$|w|$, gives
\begin{equation}
   \label{eq:homLp}
 \sup_{S_u(z,r/2)}|w|\leq C_p
       \Bigg(\fint_{S_u(z,r)}|w|^p\dd\Bigg)^{1/p},\qquad C_p= C_p(n,\lambda,\Lambda, p).
    \end{equation}

From the inclusion property of interior sections \cite[Theorem~5.30(i)]{Le24}, we can find a constant $c_*=c_*(n,\lambda,\Lambda)>0$ such that as long as $S_{4h_0}=S_u(x_0, 4h_0)\Subset\Omega$,  $r_*=c_*h_0$
satisfies
\begin{equation}\label{eq:inclusions}
 S_u(z,4r_*)\Subset S_{3h_0/2} \quad \text{and}\quad
 S_u(x,4r_*)\Subset S_{5h_0/4} 
\end{equation}
for all $z\in S_{5h_0/4}$ and $x\in S_{h_0}$.

\begin{proof}[Proof of Theorem~\ref{thm:main}] In this proof, the constants $C$ depend only on $n,\lambda,\Lambda$, $h_0$, and $\diam(S_{4h_0})$.
Write \[F_0=\norm{\F}_{L^\infty(S_{2h_0})} \quad \text{and}\quad
V_p=\norm{v}_{L^p(S_{2h_0})}.\] For a fixed, strictly convex function $u\in C^3(\Omega)$, the classical local
elliptic regularity theory gives continuous representatives of the solutions used
below; see \cite[Chapter~8]{GT01}. This observation is qualitative only.
The constants in the following estimates are structural.

\smallskip\noindent
\emph{Step 1: Local boundedness.}
For $z\in S_{5h_0/4}$, 
let $w_z\in W^{1, 2}_0(S_u(z,2r_*))$ be the solution to
\[L_uw_z=\diver\F\quad \text{in }S_u(z,2r_*),\quad w_z=0\quad \text{on }\p S_u(z,2r_*).\]
 By \eqref{eq:inclusions}
and Corollary~\ref{cor:zero}, \[\norm{w_z}_{L^\infty(S_u(z,2r_*))}\leq Ch_0^\beta F_0.\]
The function $H_z:=v-w_z$ satisfies the homogeneous equation
\[L_uH_z=0\quad \text{in }S_u(z,2r_*).\]
Applying \eqref{eq:homLp}
on $S_u(z,r_*)$ and using \eqref{eq:volume}, we obtain
\[
 |v(z)|\leq C_p|S_u(z,r_*)|^{-1/p}
                   \norm{H_z}_{L^p(S_u(z,r_*))}+\norm{w_z}_{L^\infty(S_u(z,r_*))}
 \leq C_p\big(h_0^{-n/(2p)}V_p+h_0^\beta F_0\big).
\]
Consequently,
\begin{equation}\label{eq:bounded}
 \norm{v}_{L^\infty(S_{5h_0/4})}
 \leq C_p\big(h_0^{-n/(2p)}V_p+h_0^\beta F_0\big).
\end{equation}

\smallskip\noindent
\emph{Step 2: Oscillation decay.}
Fix $x\in S_{h_0}$ and $0<h\leq r_*$. 
On
$S=S_u(x,h)$, split \[v=H+w,\] where
\[L_uw=\diver\F\quad \text{in } S,\quad w=0 \quad \text{on }\partial S,\]
and 
\[L_uH=0\quad \text{in } S,\quad H=v \quad \text{on }\partial S.\]
By the weak maximum principle,
\[\osc_S H\le\osc_S v.\]  
From \eqref{eq:homosc} (applied to $H$) and \eqref{eq:zero} in Corollary~\ref{cor:zero}, we have
\begin{equation}\label{osc1}
 \osc_{S_u(x,h/2)}v\leq  \osc_{S_u(x,h/2)}H+  \osc_{S_u(x,h/2)}w\leq\theta \osc_{S_u(x,h)}v+Ch^\beta F_0.
\end{equation}
This is the splitting argument in the proof of
\cite[Theorem~15.6]{Le24}, with Corollary~\ref{cor:zero} supplying the uniform estimate for zero-boundary solutions. Choose
\[
 \kappa=\frac12\min\{\beta,-\log_2\theta\}>0.
\]
The geometric iteration used in \cite[proof of Theorem~12.13]{Le24}
gives
\begin{equation}\label{eq:decay}
 \osc_{S_u(x,h)}v\leq C\Big(\frac h{r_*}\Big)^\kappa
       \big(\osc_{S_u(x,r_*)}v+r_*^\beta F_0\big),\qquad 0<h\leq r_*.
\end{equation}
Note that equations \eqref{eq:inclusions} and \eqref{eq:bounded} bound the initial
oscillation in \eqref{eq:decay} 
\[\osc_{S_u(x,r_*)}v \leq C_p(V_p+F_0).\]

\smallskip\noindent
\emph{Step 3: H\"older continuity.}
By \eqref{eq:geometry}, for $x,y\in S_{h_0}$,
\[
 0\leq u(y)-u(x)-Du(x)\cdot(y-x)\leq B|x-y|^{1+\alpha},\quad B= B(n,\lambda,\Lambda, h_0, \diam(S_{4h_0})).
\]
If $x\ne y$ and
$h=2B|x-y|^{1+\alpha}\leq r_*$, then $y\in S_u(x,h)$, and
\eqref{eq:decay} yields
\[
 |v(x)-v(y)|\leq C_p(V_p+F_0)|x-y|^\gamma,
 \qquad \gamma=(1+\alpha)\kappa.
\]
If $h=2B|x-y|^{1+\alpha}\geq r_*$, then $|x-y|\geq [r_*/(2B)]^{\frac{1}{1+\alpha}}$,
so the same estimate follows from \eqref{eq:bounded}.
Since $0<\gamma\leq\alpha/2<1$ and $\gamma=\gamma(n,\lambda,\Lambda)$, this proves
\eqref{eq:main}.
\end{proof}

\begin{remark}
The proof does not use a higher Lebesgue norm of $D^2u$. Its new input
is Proposition~\ref{prop:green}.
\end{remark}

\subsection{\texorpdfstring{$L^{q}$}{Lq} vector fields}\label{sec:finite-Lq}

This section records a possible strengthening of Theorem~\ref{thm:main}
that follows from Proposition~\ref{prop:green-higher}.   The point is that
the gain from $D_xg_S\in L^{1+\varepsilon}$ allows the boundedness
assumption on the divergence datum to be replaced by sufficiently high,
but finite, Lebesgue integrability.

\medskip

Fix once and for all an exponent $\varepsilon_G>0$ for which
Proposition~\ref{prop:green-higher} holds, and set
\begin{equation}\label{eq:qstar-finite-data}
 q_*:=\frac{1+\varepsilon_G}{\varepsilon_G},
 \qquad
 \vartheta_G:=(1+\varepsilon_G)\beta-\frac n2\varepsilon_G>0.
\end{equation}
Then
\begin{equation}\label{eq:eta-star-finite-data}
 \frac{\vartheta_G}{1+\varepsilon_G}
 =\beta-\frac{n}{2q_*}>0.
\end{equation}
In particular, $q_*>n/(2\beta)>n$ because $\beta<1/2$.

\begin{proposition}[Uniform estimate for zero-boundary solutions with $L^q$ vector fields]
\label{prop:zero-finite-Lq} Assume $n\geq3$, \eqref{eq:det} and \eqref{eq:outer}. 
Let $S=S_u(z,h)$ satisfy $S_u(z,2h)\Subset S_{3h_0/2}$, and let
$q\in[q_*,\infty)$.  If $\F\in L^q(S;\R^n)$ and
$w\in W^{1,2}_0(S)$ solves
\[
 L_uw=\diver\F\qquad\text{in }S,\quad w=0\qquad\text{on }\p S,
\]
then
\begin{equation}\label{eq:zero-finite-Lq}
 \|w\|_{L^\infty(S)}
 \le C_q h^{\eta_q}\|\F\|_{L^q(S)},
 \qquad
 \eta_q:=\beta-\frac{n}{2q}>0,
\end{equation}
where $C_q=C(n,q,\lambda,\Lambda,h_0, \diam(S_{4h_0}))$.
\end{proposition}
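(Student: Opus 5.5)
The plan mirrors the proof of Corollary~\ref{cor:zero}, with Proposition~\ref{prop:green-higher} in place of Proposition~\ref{prop:green}. First take $\F$ smooth on $\overline S$. Then the representation formula \eqref{Grep} applies to $w$, and integrating by parts (the boundary term vanishes because $g_S(\cdot,y)=0$ on $\p S$) gives
\[
 w(y)=\int_S D_xg_S(x,y)\cdot\F(x)\dd,\qquad y\in S.
\]

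Next, apply H\"older's inequality with exponents $1+\varepsilon_G$ and $q_*=(1+\varepsilon_G)/\varepsilon_G$, which are conjugate. Together with \eqref{eq:green-higher} for $\varepsilon=\varepsilon_G$ this yields
\[
 |w(y)|\le \Big(\int_S|D_xg_S(x,y)|^{1+\varepsilon_G}\dd\Big)^{\frac1{1+\varepsilon_G}}\|\F\|_{L^{q_*}(S)}
 \le C h^{\vartheta_G/(1+\varepsilon_G)}\|\F\|_{L^{q_*}(S)} .
\]
By \eqref{eq:eta-star-finite-data} the exponent is exactly $\eta_{q_*}$, so this settles the case $q=q_*$.

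For $q>q_*$, apply H\"older once more on $S$:
\[
 \|\F\|_{L^{q_*}(S)}\le |S|^{\frac1{q_*}-\frac1q}\|\F\|_{L^q(S)}.
\]
By the volume estimate \eqref{eq:volume}, $|S|\le Ch^{n/2}$; this is legitimate since $S_u(z,h)\Subset\Omega$. Hence $\|\F\|_{L^{q_*}(S)}\le Ch^{\frac n{2q_*}-\frac n{2q}}\|\F\|_{L^q(S)}$. Multiplying by $h^{\eta_{q_*}}=h^{\beta-\frac n{2q_*}}$ gives precisely $h^{\beta-\frac n{2q}}=h^{\eta_q}$, and $\eta_q\ge\eta_{q_*}>0$. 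The constant depends on $q$ only through this H\"older step.

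Finally, remove the smoothness assumption on $\F$. Extend $\F$ by zero and mollify to get smooth $\F_k\to\F$ in $L^q(S)$ with $\|\F_k\|_{L^q(S)}\le\|\F\|_{L^q(S)}$. Since $q\ge q_*>n\ge2$ and $S$ is bounded, also $\F_k\to\F$ in $L^2(S)$. Because $U$ is smooth and uniformly elliptic on $\overline S$ (qualitatively), the energy estimate gives $w_k\to w$ in $W^{1,2}_0(S)$. Pass to an a.e.\ convergent subsequence and let $k\to\infty$ in the uniform bound. I expect this approximation step to be routine, with the only point to check being that $L^q$ convergence implies $L^2$ convergence. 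The substantive input, the uniform-in-pole $L^{1+\varepsilon}$ bound on $D_xg_S$, is already supplied by Proposition~\ref{prop:green-higher}.
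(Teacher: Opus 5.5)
Your proposal is correct and follows the paper's proof essentially step for step. You use the Green representation with integration by parts for smooth $\F$, then H\"older with the conjugate pair $(1+\varepsilon_G,q_*)$ together with Proposition~\ref{prop:green-higher}, then a second H\"older step with $|S|\le Ch^{n/2}$ to pass from $q_*$ to $q$, and finally mollification, $L^2$ convergence, the energy estimate, and an a.e.\ convergent subsequence.
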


\begin{proof}
Assume first that the vector field $\F$ is smooth.  For $y\in S$, 
the representation formula \eqref{Grep} can be applied to $w$, 
and integration by parts gives
\[
 w(y)=-\int_S g_S(x,y)\diver\F(x)\dd
      =\int_S D_xg_S(x,y)\cdot\F(x)\dd.\]
Here, $g_S(\cdot, y)$ is the Green's function of $L_u$ in $S$ with pole $y$. 

The H\"older inequality with exponents
$1+\varepsilon_G$ and $q_*$ and Proposition~\ref{prop:green-higher} give
\[
 |w(y)|
 \le \|D_xg_S(\cdot,y)\|_{L^{1+\varepsilon_G}(S)}
      \|\F\|_{L^{q_*}(S)} \leq C\|\F\|_{L^{q_*}(S)}h^{\vartheta_G/(1+\varepsilon_G)}.
\]
If $q\ge q_*$, the volume estimate for sections in \eqref{eq:volume} yields
\[
 \|\F\|_{L^{q_*}(S)}
 \le |S|^{\frac1{q_*}-\frac1q}\|\F\|_{L^q(S)}
 \le C h^{\frac n2(\frac1{q_*}-\frac1q)}
        \|\F\|_{L^q(S)}.
\]
Combining these estimates and recalling  \eqref{eq:eta-star-finite-data}, 
we obtain \eqref{eq:zero-finite-Lq} for smooth data.

\medskip

For a general vector field $\F\in L^q(S)$, one may extend $\F$ by zero and mollify to obtain smooth approximants $\F_k$.
Since $q\ge q_*>n>2$, $\F_k$ converges to $\F$ in $L^2(S)$,
so the corresponding solution to 
\[
 L_uw_k=\diver\F_k\qquad\text{in }S,\quad w_k=0\qquad\text{on }\p S,
\]
converges to $w$ in
$W^{1,2}_0(S)$ by the usual energy estimate.  The uniform
$L^\infty(S)$ bound on $w_k$ passes to an almost-everywhere convergent subsequence,
which proves \eqref{eq:zero-finite-Lq} for $w$.  \end{proof}

\begin{theorem}[Interior H\"older estimate with $L^q$ vector fields]
\label{thm:finite-Lq-holder}
Let $n\ge3$, and assume \eqref{eq:det} and \eqref{eq:outer}.  Let
$p\in(1,\infty)$ and $q\in[q_*,\infty)$.  Suppose that
$\F\in L^q(S_{2h_0};\R^n)$ and
$v\in W^{1,2}_{\loc}(S_{2h_0})\cap L^p(S_{2h_0})$ satisfies
\[
 L_uv=\diver\F
 \qquad\text{weakly in }S_{2h_0}.
\]
Then there exist
$\gamma_q=\gamma_q(n,q,\lambda,\Lambda)\in(0,1)$ and
$C=C(n,p,q,\lambda,\Lambda,h_0, \diam(S_{4h_0}))>0$ such that
\begin{equation}\label{eq:finite-Lq-holder}
 \|v\|_{L^\infty(S_{h_0})}
 +[v]_{C^{\gamma_q}(S_{h_0})}
 \le C\left(
 \|\F\|_{L^q(S_{2h_0})}
 +\|v\|_{L^p(S_{2h_0})}
 \right).
\end{equation}
\end{theorem}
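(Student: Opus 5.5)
The plan is to repeat the three-step proof of Theorem~\ref{thm:main} verbatim. The only change is that Corollary~\ref{cor:zero} is replaced by Proposition~\ref{prop:zero-finite-Lq}, so the decay rate $h^\beta$ becomes $h^{\eta_q}$ with $\eta_q=\beta-\frac{n}{2q}>0$. Write
\[
F_q=\|\F\|_{L^q(S_{2h_0})},\qquad V_p=\|v\|_{L^p(S_{2h_0})}.
\]
The two estimates for the homogeneous equation, \eqref{eq:homosc} and \eqref{eq:homLp}, the inclusions \eqref{eq:inclusions}, and the volume bounds \eqref{eq:volume} are all unchanged.

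\emph{Local boundedness.} Fix $z\in S_{5h_0/4}$ and solve $L_uw_z=\diver\F$ in $S_u(z,2r_*)$ with zero boundary data. By \eqref{eq:inclusions}, $S_u(z,4r_*)\Subset S_{3h_0/2}$, so Proposition~\ref{prop:zero-finite-Lq} applies and gives $\|w_z\|_{L^\infty}\le C_q h_0^{\eta_q}F_q$. The difference $H_z=v-w_z$ solves the homogeneous equation, so \eqref{eq:homLp} bounds it by the $L^p$ norm of $H_z$. We must then bound $\|w_z\|_{L^p(S_u(z,r_*))}$, which follows from the $L^\infty$ bound and the volume estimate. Together these give
\[
\|v\|_{L^\infty(S_{5h_0/4})}\le C(V_p+F_q).
\]

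\emph{Oscillation decay.} Fix $x\in S_{h_0}$ and $h\le r_*$, and split $v=H+w$ on $S_u(x,h)$ as before. Then
\[
\osc_{S_u(x,h/2)}v\le\theta\,\osc_{S_u(x,h)}v+C_qh^{\eta_q}F_q .
\]
Here it is enough to use $\|\F\|_{L^q(S_u(x,h))}\le F_q$; the factor $h^{\eta_q}$ already provides decay. Set $\kappa_q=\frac12\min\{\eta_q,-\log_2\theta\}$. The geometric iteration then yields
\[
\osc_{S_u(x,h)}v\le C(h/r_*)^{\kappa_q}\big(\osc_{S_u(x,r_*)}v+F_q\big).
\]
The last step is the same as Step~3 of the proof of Theorem~\ref{thm:main}. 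The $C^{1,\alpha}$ bound gives $y\in S_u(x,2B|x-y|^{1+\alpha})$, which converts the decay in sections into Hölder continuity with exponent $\gamma_q=(1+\alpha)\kappa_q$. This exponent depends only on $n,q,\lambda,\Lambda$, and satisfies $\gamma_q\le\alpha/2<1$ since $\eta_q<\beta$.

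\emph{Main point to check.} Nothing new is expected to go wrong; the main care is in the approximation and representative issues. The Hölder inequality step and the density argument are already handled inside Proposition~\ref{prop:zero-finite-Lq}. The splitting needs $w\in W^{1,2}_0$ to exist, and this holds because $q\ge q_*>n>2$ gives $\F\in L^2$ and $U$ is uniformly elliptic on compact subsets. For the qualitative continuity of $v$, the De Giorgi--Nash theory for $L^q$ data with $q>n$ supplies a continuous representative (see \cite[Chapter~8]{GT01}). The quantitative constants, however, come only from the structural estimates above.
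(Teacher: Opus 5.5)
Your proposal is correct and follows the paper's proof essentially verbatim. It has the same three steps: local boundedness via the splitting $v=H_z+w_z$ on $S_u(z,2r_*)$, with Proposition~\ref{prop:zero-finite-Lq} in place of Corollary~\ref{cor:zero}; the recurrence $\osc_{S_u(x,h/2)}v\le\theta\,\osc_{S_u(x,h)}v+Ch^{\eta_q}F_q$ iterated with $\kappa_q=\frac12\min\{\eta_q,-\log_2\theta\}$; and the conversion to $\gamma_q=(1+\alpha)\kappa_q$ through the $C^{1,\alpha}$ bound. The small differences, such as writing $h_0^{\eta_q}$ or $F_q$ where the paper has $r_*^{\eta_q}F_q$ and spelling out the $L^p$ bound on $w_z$, only affect constants.
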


\begin{proof}
The proof is similar to the proof of
Theorem~\ref{thm:main} where Corollary~\ref{cor:zero} is now replaced by 
Proposition~\ref{prop:zero-finite-Lq}.  We include the details to keep the dependence on
$q$ and the section scale explicit. We use the estimates \eqref{eq:homosc}-\eqref{eq:homLp} and inclusions \eqref{eq:inclusions} as in the last subsection. 

In this proof, the constants $C, C_p$ depend only on $n,\lambda,\Lambda, p, q$, $h_0$, and $\diam(S_{4h_0})$.
Write
\[
 F_q:=\|\F\|_{L^q(S_{2h_0})},
 \qquad
 V_p:=\|v\|_{L^p(S_{2h_0})}.
\]

For $z\in S_{5h_0/4}$, let $w_z\in W^{1, 2}_0(S_u(z,2r_*))$ be the solution to
\[L_uw_z=\diver\F\quad \text{in }S_u(z,2r_*),\quad w_z=0\quad \text{on }\p S_u(z,2r_*).\]  By
Proposition~\ref{prop:zero-finite-Lq},
\[
 \|w_z\|_{L^\infty(S_u(z,2 r_*))}
 \le Cr_*^{\eta_q}F_q.
\]
The function $H_z:=v-w_z$ 
 satisfies the homogeneous equation
\[L_uH_z=0\quad \text{in }S_u(z,2r_*).\]
  Applying
\eqref{eq:homLp} on $S_u(z,r_*)$ and using \eqref{eq:volume},
\begin{align*}
 |v(z)|
 \le C_p |S_u(z,r_*)|^{-1/p}
        \|H_z\|_{L^p(S_u(z,r_*))}
       +\|w_z\|_{L^\infty(S_u(z,r_*))}\le C\bigl(V_p+F_q\bigr),
\end{align*}
where $r_*^{-n/(2p)}$ is absorbed into the 
constant $C$.  Consequently,
\begin{equation}\label{eq:finite-Lq-bounded}
 \|v\|_{L^\infty(S_{5h_0/4})}
 \le C\bigl(V_p+F_q\bigr).
\end{equation}

Next fix $x\in S_{h_0}$ and $0<h\le r_*$.  Then, as in the derivation of \eqref{osc1} where
Corollary~\ref{cor:zero} is now replaced by 
Proposition~\ref{prop:zero-finite-Lq}, we have
\begin{equation}\label{eq:finite-Lq-recurrence}
 \osc_{S_u(x,h/2)}v
 \le\theta \osc_{S_u(x,h)}v+C h^{\eta_q}F_q.
\end{equation}
Since $\eta_q>0$, choose
\[
 \kappa_q:=\frac12\min\{\eta_q,-\log_2\theta\}>0.
\]
The standard geometric iteration then yields
\begin{equation}\label{eq:finite-Lq-decay}
 \osc_{S_u(x,h)}v
 \le C\left(\frac h{r_*}\right)^{\kappa_q}
 \left(\osc_{S_u(x,r_*)}v+r_*^{\eta_q}F_q\right),
 \qquad 0<h\le r_*.
\end{equation}
By \eqref{eq:finite-Lq-bounded}, the quantity in parentheses is bounded by
$C(V_p+F_q)$.

\medskip

As in {\it Step 3} in the proof of
Theorem~\ref{thm:main}, we have \eqref{eq:finite-Lq-holder}
with
$\gamma_q=(1+\alpha)\kappa_q\in(0,1)$.
Indeed, $\eta_q\leq\beta$, hence
$\gamma_q\le(1+\alpha)\beta/2=\alpha/2<1$.
\end{proof}

\begin{remark}
\label{LqFrem}
The above extension of the interior H\"older estimate in  Theorem~\ref{thm:main} to $L^q$ vector fields uses the higher-integrability theory for
the Hessian $D^2u$ through Proposition~\ref{prop:green-higher}.  Thus, it is logically
separate from Theorem~\ref{thm:main}, whose proof for bounded vector fields does not use
the interior $W^{2,1+\varepsilon}$ estimate of the Monge--Amp\`ere equation.
\end{remark}

\section{Global H\"older estimates}\label{sec:global}

We study global H\"older estimates for the linearized Monge--Amp\`ere equation in divergence form in this section under the following structural  conditions.

\medskip

\paragraph{\bf Global structural assumptions.} Let $\Omega \subset \mathbb{R}^n$ be a convex domain and assume that there exists a constant $\rho>0$ such that
\begin{equation}\label{glb1}
     \Omega \subset B_{1 / \rho}(0) \subset \mathbb{R}^n
\end{equation}
and for each $y \in \partial \Omega$, 
\begin{equation}\label{glb2}
   \text{there is an interior ball } B_\rho(z) \subset \Omega \text{ such that } y \in \partial B_\rho(z). 
\end{equation}
Let $u \in C^{1,1}(\overline{\Omega}) \cap C^3(\Omega)$ be a convex function satisfying
\begin{equation}\label{glb3}
    0<\lambda \leq \det D^2 u \leq \Lambda \quad \text { in } \Omega.\noeqref{glb2,glb3}
\end{equation}
Assume further that on $\partial \Omega, u$ separates quadratically from its tangent hyperplanes; namely, for all $x_0, x \in \partial \Omega$, we have
\begin{equation}\label{glb4}
    \rho\left|x-x_0\right|^2 \leq u(x)-u\left(x_0\right)-D u\left(x_0\right) \cdot\left(x-x_0\right) \leq \rho^{-1}\left|x-x_0\right|^2.
\end{equation}
For the boundary localization framework associated with this hypothesis, see Savin \cite{Sav13}.

For an open, proper set $V\subset\Omega$, let $g_V(\cdot, y)$ be the Green's function of the linearized Monge--Amp\`{e}re operator $L_u=D_i(U^{i j} D_j)$ in $V$ with pole $y\in V$.
\subsection{Global integrability of the gradient of the Green's function}
We first recall \cite[Lemma 3.5]{GL26}.
\begin{lemma} 
\label{mubound}
Assume that $u$ and $\Omega$ satisfy \eqref{glb1}--\eqref{glb4}. There exists $\alpha_b=\alpha_b(n,\lambda,\Lambda)\in (0, 1)$ such that the following statements hold.
	\begin{enumerate}
	\item[(i)] There exists $C_\ast=C_\ast(n, \lambda, \Lambda, \rho)>0$ such that 
	 \begin{equation}
\label{Duglb}
[Du]_{C^{\alpha_b}(\overline\Omega)}:=\sup_{x\neq y\in \overline\Omega}|Du(x)-Du(y)|/|x-y|^{\alpha_b}\leq C_\ast.
\end{equation}
\item[(ii)] Let $S_u(x_{0}, t_{0})$ be a section of $u$ with $x_0\in \overline{\Omega}$ and $t_{0}>0$. Then
\begin{equation}
\label{Hausdorffest}
\mathcal{H}^{n-1} (\partial S_u(x_{0}, t_{0}))\leq C(n, \lambda, \Lambda,  \rho) t_0^{\frac{n-2}{2} + \frac{
\alpha_b}{\alpha_b +1}}.\end{equation}
	\end{enumerate}
\end{lemma}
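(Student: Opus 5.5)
For part (i), I do not plan a new argument. I will combine the interior $C^{1,\alpha}$ regularity recalled in the Preliminaries (\cite[Theorem~5.21]{Le24}) with Savin's boundary localization theorem \cite{Sav13}. The latter applies under \eqref{glb1}--\eqref{glb4} because $u\in C^{1,1}(\overline\Omega)$ and $u$ separates quadratically on $\partial\Omega$. It says that a boundary section $S_u(x_0,t)$ with $x_0\in\partial\Omega$ is comparable to a half-ellipsoid of volume $\sim t^{n/2}$ and eccentricity at most $C|\log t|$. From this one obtains a pointwise estimate at boundary points,
\[
0\le u(x)-u(x_0)-Du(x_0)\cdot(x-x_0)\le C|x-x_0|^{1+\alpha_b}, \qquad x_0\in\partial\Omega,
\]
for some $\alpha_b<\alpha$. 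Interior points at distance $d$ from $\partial\Omega$ are handled by rescaling the maximal interior section, which is comparable to a boundary section of height $\sim d^2$, and applying the interior estimate. Patching the two regimes in the standard way gives \eqref{Duglb}; this is the global $C^{1,\alpha}$ theorem of \cite[Chapter~9]{Le24}, and I would cite it.

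For part (ii), I follow the divergence-theorem argument of Lemma~\ref{lem:mass}, now allowing the section to meet $\partial\Omega$.

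\emph{Step 1 (normalization).} Subtract the supporting affine function at $x_0$, so that $u\ge0$, $u(x_0)=0$, $Du(x_0)=0$ and $S:=S_u(x_0,t_0)=\{u<t_0\}\cap\overline\Omega$. Write $\partial S=\Gamma_1\cup\Gamma_2$ with $\Gamma_1=\partial S\cap\Omega\subset\{u=t_0\}$ and $\Gamma_2\subset\partial\Omega$.

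\emph{Step 2 (gradient bound on $S$).} For $x\in\overline S$ I aim to show
\[
|Du(x)|\le C t_0^{\beta_b},\qquad \beta_b=\frac{\alpha_b}{1+\alpha_b}.
\]
This comes from the one-dimensional argument of \eqref{eq:m}, using the global H\"older bound \eqref{Duglb} and $0\le u\le t_0$ on $S$. Walk from $x$ a distance $r=ct_0^{1/(1+\alpha_b)}$ in a direction in which $u$ decreases at rate $|Du(x)|$, and use $u\ge0$ there. If the segment would leave $\overline\Omega$, I replace it by a segment into the interior, which exists by the interior ball condition \eqref{glb2}.

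\emph{Step 3 (area of $\Gamma_1$).} On $\Gamma_1$ we have $\nu=Du/|Du|$ and, by convexity, $Du(x)\cdot(x-x_0)\ge u(x)-u(x_0)=t_0$. Hence
\[
t_0\,\mathcal H^{n-1}(\Gamma_1)\le \sup_{\overline S}|Du|\int_{\Gamma_1}(x-x_0)\cdot\nu\,d\mathcal H^{n-1}.
\]
On $\Gamma_2$ the integrand $(x-x_0)\cdot\nu$ is nonnegative, since $x_0\in\overline\Omega$ and $\Omega$ is convex. So the last integral is at most $\int_{\partial S}(x-x_0)\cdot\nu\,d\mathcal H^{n-1}=n|S|\le Ct_0^{n/2}$, using the global volume bound for sections \cite[Chapter~9]{Le24}. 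This yields $\mathcal H^{n-1}(\Gamma_1)\le Ct_0^{n/2-1+\beta_b}$, which is exactly the exponent in \eqref{Hausdorffest}.

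\emph{Step 4 (area of $\Gamma_2$).} I would use monotonicity of perimeter under inclusion of convex sets. The convex set $S$ is contained in the convex set $\{u<t_0\}$ computed within the convex hull of $\Gamma_1$ together with the part of $\partial\Omega$ it cuts. Alternatively, project $\Gamma_2$ radially from $x_0$ onto $\Gamma_1$ plus the relative boundary of $\Gamma_2$. Either route should bound $\mathcal H^{n-1}(\Gamma_2)$ by $C\mathcal H^{n-1}(\Gamma_1)$ plus a lower-order term.

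The main obstacle is Step 4, together with the boundary case of Step 2. If the direct geometric comparison fails, my fallback is Savin's localization. By quadratic separation \eqref{glb4} and the localization, $\Gamma_2$ lies in a boundary patch whose diameter is controlled by the comparable half-ellipsoid of height $t_0$. The area of that patch can then be estimated by the area of the ellipsoid's flat face, which is comparable to the curved part and hence to $\Gamma_1$. The logarithmic eccentricity loss would be absorbed by lowering $\alpha_b$ slightly.
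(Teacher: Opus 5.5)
\paragraph{Verdict.} Part (i) is fine as a citation; the paper does not prove this lemma at all and simply recalls it from \cite[Lemma~3.5]{GL26}. For (ii), however, your argument has a genuine gap, and it lies in Step~2 as much as in Step~4.

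\paragraph{Step 2 does not bound $|Du|$ near $\partial\Omega$.} The one-dimensional argument behind \eqref{eq:m} needs, at the point $x$, a unit vector $e$ with $Du(x)\cdot e\ge c|Du(x)|$ such that the whole segment $\{x-te:0\le t\le r\}$, $r\sim t_0^{1/(1+\alpha_b)}$, stays in $\overline\Omega$, because that is where you use $u\ge0$.
\begin{itemize}
\item Near $\partial\Omega$, for instance at points of $\Gamma_1$ close to $\overline{\Gamma_1}\cap\partial\Omega$, one has $\operatorname{dist}(x,\partial\Omega)\ll r$. The admissible $e$ are then essentially those for which $-e$ points into $\Omega$ or is almost tangent.
\item Suppose $Du(x)$ is nearly a positive multiple of the inner normal at a nearby boundary point. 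Then $Du(x)\cdot e$ is negative, or small compared with $|Du(x)|$, for every admissible $e$. So $Du(x)\cdot e\le t_0/r+Cr^{\alpha_b}$ gives no bound on $|Du(x)|$.
\item The interior ball condition \eqref{glb2} supplies inward directions, not descent directions.
\end{itemize}
Nothing in your argument excludes this configuration. Convexity only gives $Du(x)\cdot(x-x_0)\ge t_0$, which constrains the tangential part of $Du(x)$, not its inward normal part. So $\sup_{\Gamma_1}|Du|\le Ct_0^{\beta_b}$ is unproved, and Step~3 rests on it.

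\paragraph{Step 4 is not established.} A ray from $x_0$ leaves the convex set $S$ only once, so radial projection cannot carry $\Gamma_2$ onto $\Gamma_1$. Moreover, $\mathcal H^{n-1}(\Gamma_2)\le C\mathcal H^{n-1}(\Gamma_1)$ is false for general convex $S\subset\Omega$: take a ball minus a thin cap. Any bound on $\Gamma_2$ therefore has to use the smallness of $S$ itself.

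\paragraph{A fix: volume over inradius.} You need neither a gradient bound nor a splitting of $\partial S$. The exponent in \eqref{Hausdorffest} equals $\frac n2-\frac1{1+\alpha_b}$, i.e.\ volume divided by inradius.
\begin{itemize}
\item By \eqref{Duglb} and convexity of $\overline\Omega$, $u(x)-u(x_0)-Du(x_0)\cdot(x-x_0)\le\frac{C_\ast}{1+\alpha_b}|x-x_0|^{1+\alpha_b}$ on $\overline\Omega$. Hence $\overline\Omega\cap B_r(x_0)\subset S$ with $r=c\,t_0^{1/(1+\alpha_b)}$.
\item By \eqref{glb2}, $\Omega\cap B_r(x_0)$ contains a ball $B_s(\xi)$ with $s=\min\{r/4,\rho\}$. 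If $\operatorname{dist}(x_0,\partial\Omega)<r/2$, use a small ball internally tangent to the interior ball at a nearest boundary point.
\item Every supporting half-space of the convex set $S$ contains $B_s(\xi)$, so $(x-\xi)\cdot\nu\ge s$ at $\mathcal H^{n-1}$-a.e.\ point of $\partial S$. Therefore
\[
 s\,\mathcal H^{n-1}(\partial S)\le\int_{\partial S}(x-\xi)\cdot\nu\,\mathrm{d}\mathcal H^{n-1}=n|S|\le C(n,\lambda)\,t_0^{n/2}
\]
by the global volume bound \cite[Lemma~5.8]{Le24}.
\item If $r/4\le\rho$, this is exactly \eqref{Hausdorffest}. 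If $r/4>\rho$, then $t_0$ is bounded below, and $\mathcal H^{n-1}(\partial S)\le n|B_{1/\rho}|/\rho$ suffices.
\end{itemize}
This handles $\Gamma_1$ and $\Gamma_2$ at once and uses only (i), \eqref{glb2} and the volume bound. Consistently, Lemma~\ref{lem:massbdr} later bounds $|Du-Du(x_0)|$ on $\partial S$ only by a constant, so the paper never needs a $t_0^{\beta_b}$ gradient bound on boundary sections.
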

The following lemma is a global version of Lemma \ref{lem:mass}.
\begin{lemma}[Hessian mass]
\label{lem:massbdr}
Assume that $u$ and $\Omega$ satisfy \eqref{glb1}--\eqref{glb4}. Let $S_u(x_{0}, t_{0})$ be a section of $u$ with $x_0\in \overline{\Omega}$ and $t_{0}>0$. Then
\begin{equation}\label{eq:massbdr}
 \int_{S_u(x_{0}, t_{0})}\Delta u\dd\leq C t_0^{\frac{n-2}{2} + \frac{
\alpha_b}{\alpha_b +1}},\qquad C=C(n, \lambda, \Lambda,  \rho).
\end{equation}
\end{lemma}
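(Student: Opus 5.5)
The plan is to mimic the proof of Lemma~\ref{lem:mass}, with Lemma~\ref{mubound} supplying both the global gradient bound and the boundary measure estimate. Normalize by subtracting the supporting affine function at $x_0$, so that $u\geq0$, $u(x_0)=0$, $Du(x_0)=0$, and $u<t_0$ on $S:=S_u(x_0,t_0)$. The divergence theorem on the convex set $S$ (Lipschitz boundary; $u\in C^{1,1}(\overline\Omega)$, so $\Delta u\in L^\infty$ and the identity is justified, e.g.\ by approximating $S$ from inside) gives
\[
 \int_S\Delta u\dd=\int_{\partial S}Du\cdot\nu\dH\leq \Big(\sup_{\partial S}|Du|\Big)\,\mathcal H^{n-1}(\partial S).
\]
By \eqref{Hausdorffest}, it therefore suffices to show $\sup_{S}|Du|\leq C$ with $C=C(n,\lambda,\Lambda,\rho)$. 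Indeed, the full exponent $\frac{n-2}{2}+\frac{\alpha_b}{1+\alpha_b}$ already comes from the Hausdorff measure bound. I do not need an extra smallness of $|Du|$ on $\partial S$; the exponent structure differs from Lemma~\ref{lem:mass}, and this is consistent with the claimed estimate.

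The bound on $|Du|$ follows from \eqref{Duglb}. We have $|Du(x)-Du(x_0)|\leq C_*|x-x_0|^{\alpha_b}\leq C_*(2/\rho)^{\alpha_b}$, since $\Omega\subset B_{1/\rho}(0)$. So after normalization, $|Du|\leq C(n,\lambda,\Lambda,\rho)$ on $\overline\Omega$. Note that $\int_S \Delta u$ is unchanged by subtracting an affine function, so the normalization is harmless.

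The main subtlety is that $S$ may touch $\partial\Omega$ when $x_0$ is near or on $\partial\Omega$, or when $t_0$ is large. In that case, $\partial S$ contains a piece of $\partial\Omega$ where $u<t_0$, and the outward normal there is not $Du/|Du|$. The crude bound above uses only $|Du\cdot\nu|\leq|Du|$ and the measure of the whole boundary $\partial S$, so it still applies, provided \eqref{Hausdorffest} is read, as in \cite[Lemma 3.5]{GL26}, for the topological boundary of $S\subset\overline\Omega$, including the part on $\partial\Omega$. If that reading were not available, I would instead bound $\mathcal H^{n-1}(\partial\Omega\cap \overline S)$ separately: by \eqref{glb4}, $\partial\Omega\cap S$ lies in a boundary ball of radius $C\sqrt{t_0}$, of measure $Ct_0^{(n-1)/2}\leq Ct_0^{\frac{n-2}{2}+\frac{\alpha_b}{1+\alpha_b}}$ when $t_0\lesssim1$. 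For large $t_0$, I would use the trivial bound $\int_\Omega\Delta u\leq C$, since $\mathcal H^{n-1}(\partial\Omega)\leq C(\rho)$ and $|Du|\leq C$.
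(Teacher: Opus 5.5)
Your proposal is correct and is essentially the paper's proof. Both subtract the tangent plane at $x_0$, apply the divergence theorem, bound $|Du(x)-Du(x_0)|\le C_*|x-x_0|^{\alpha_b}\le C_*(2/\rho)^{\alpha_b}$ on $\partial S_u(x_0,t_0)$ via \eqref{Duglb}, and apply \eqref{Hausdorffest} to the whole boundary $\partial S_u(x_0,t_0)$, exactly as the paper reads it. Your fallback is therefore unnecessary; as sketched, its localization of $\partial\Omega\cap S$ via \eqref{glb4} is only immediate when $x_0\in\partial\Omega$.
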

\begin{proof}
By the divergence theorem, we have
\begin{equation*}
\begin{split}
 \int_{S_u(x_{0}, t_{0})}\Delta u\dd &=  \int_{S_u(x_{0}, t_{0})}\Delta [u(x)-u(x_0)-Du(x_0)\cdot (x-x_0)]\dd\\
 &= \int_{\partial S_u(x_{0}, t_{0})} [Du(x)-Du(x_0)]\cdot\nu \,\mathrm{d}S.
 \end{split}
\end{equation*}
The claimed estimate now follows from Lemma \ref{mubound}, noting that
\[ |Du(x)-Du(x_0)|\leq C_\ast |x-x_0|^{\alpha_b} \leq C_\ast (2/\rho)^{\alpha_b},\quad \text{for all } x\in \partial S_u(x_0, t_0).\]
\end{proof}

\begin{remark}
\label{rem:bdrinfo}
Observe that, under the assumptions \eqref{glb1}--\eqref{glb4}, by \cite[Lemma 9.7]{Le24}, there exists $M=M(n,\lambda,\Lambda,\rho)>0$ such that 
\begin{equation}
\label{Omsec}
\overline{\Omega}\subset S_u(x_0, M) \quad\text{for all}\quad x_0\in\overline{\Omega}.\end{equation} Therefore, $\Omega$ is a section of $u$ with height comparable to $1$. Consider an open, proper subset $V\subset\Omega$. By the global localization of the superlevel sets of the Green's function of the Monge--Amp\`ere operator in
\cite[Lemma~4.2(i)]{GL26},  there exist constants $C_g,T_g>0$ depending only on $n,\lambda,\Lambda, \rho$
such that, for  every $y\in V$, and every
$k\geq T_g$,
\begin{equation}\label{eq:global-level-localization-Lq}
 \{x\in V:g_V(x,y)>k\}
 \subset S_u\bigl(y,C_g k^{-\frac{2}{n-2}}\bigr).
\end{equation}
Moreover, if
$g=g_V(\cdot,y)$, then
\begin{equation}\label{eq:global-energy-Lq}
 |Dg|^2\le\lambda^{-1}\Delta u\,U^{ij}D_i gD_j g,
 \qquad
 \int_{\{g\le k\}}U^{ij}D_i gD_j g\,\mathrm{d}x=k.
\end{equation}
\end{remark}
As in the proof of Proposition \ref{prop:green} (without using
the global $W^{2,1+\varepsilon}$-estimate for the Monge--Amp\`ere equation), we have 
\begin{proposition}\label{prop:greenbdr}
Let $n\geq3$, and assume that $u$ and $\Omega$ satisfy \eqref{glb1}--\eqref{glb4}. Let $S:=S_u(x_{0}, t_{0})$ be a section of $u$ with $x_0\in \overline{\Omega}$ and $t_{0}>0$.
Then
\begin{equation}\label{eq:greenbdr}
 \sup_{y\in S}\int_S|D_xg_S(x,y)|\dd\leq 
 C t_0^{ \frac{
\alpha_b}{2(\alpha_b +1)}},\qquad C=C(n, \lambda, \Lambda,  \rho).
\end{equation}
In particular,
\begin{equation}\label{eq:green-global}
 \sup_{y\in\Omega}\int_\Omega|D_xg_\Omega(x,y)|\dd
 \leq C(n,\lambda,\Lambda,\rho).
\end{equation}
\end{proposition}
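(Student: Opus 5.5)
We follow the proof of Proposition~\ref{prop:green} verbatim, replacing the interior ingredients by their global counterparts. Set $a=\frac{n-2}{2}$ and $\beta_b=\frac{\alpha_b}{\alpha_b+1}$, fix $y\in S$ and write $g=g_S(\cdot,y)$. Note that $S=S_u(x_0,t_0)\cap\overline\Omega$ (interior) is an open proper subset of $\Omega$, so Remark~\ref{rem:bdrinfo} applies with $V=S$. By \eqref{eq:global-energy-Lq} and Cauchy--Schwarz, for every measurable $E\subset S$,
\[
\int_E|Dg|\dd\leq\lambda^{-1/2}\Big(\int_E U^{ij}D_igD_jg\dd\Big)^{1/2}\Big(\int_E\Delta u\dd\Big)^{1/2}.
\]
By \eqref{Omsec} we may assume $t_0\le M$; otherwise $S=\overline\Omega\cap S_u(x_0,M)$, and the bound with $t_0$ replaced by $M$ is a constant, which suffices after adjusting $C$.

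\emph{Low levels.} We choose the threshold $T=\max\{T_g,\,\tau_b t_0^{-a}\}$, with $\tau_b$ a constant depending only on $n,\lambda,\Lambda,\rho$. Then the truncation identity in \eqref{eq:global-energy-Lq} and Lemma~\ref{lem:massbdr} applied on the whole section $S$ give
\[
\int_{\{g\le T\}}|Dg|\dd\le CT^{1/2}t_0^{(a+\beta_b)/2}.
\]
When $T=\tau_bt_0^{-a}$, this equals $Ct_0^{\beta_b/2}$. When $T=T_g$, which happens only if $t_0\gtrsim 1$, the bound is $C\le C t_0^{\beta_b/2}$ since $t_0$ is bounded below.

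\emph{Dyadic bands.} For $j\ge0$ let $K_j=2^jT$ and $E_j=\{K_j<g\le 2K_j\}$. The energy on $E_j$ is at most $2K_j$. By \eqref{eq:global-level-localization-Lq}, $E_j\subset S_u(y,C_gK_j^{-1/a})$, a section centered at $y\in\overline\Omega$. Lemma~\ref{lem:massbdr} holds for any such section, with no compactness requirement, which is the key simplification compared with the interior case. It gives $\int_{E_j}\Delta u\le C K_j^{-(a+\beta_b)/a}$. Hence
\[
\int_{E_j}|Dg|\dd\le C K_j^{1/2-(a+\beta_b)/(2a)}=CK_j^{-\beta_b/(2a)}\le C t_0^{\beta_b/2}2^{-j\beta_b/(2a)},
\]
using $K_j\ge T\ge\tau_bt_0^{-a}$. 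Summing the geometric series over $j$ and adding the low-level part yields \eqref{eq:greenbdr} uniformly in $y$.

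For \eqref{eq:green-global}, take $x_0\in\Omega$ and $t_0=M+1$. By \eqref{Omsec}, $S_u(x_0,t_0)\supset\overline\Omega$, so $g_S=g_\Omega$, and \eqref{eq:greenbdr} gives a bound depending only on $n,\lambda,\Lambda,\rho$.

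The main point to check is that the global localization \eqref{eq:global-level-localization-Lq} and the mass bound of Lemma~\ref{lem:massbdr} hold for sections that touch $\partial\Omega$. Both are stated for arbitrary centers in $\overline\Omega$, so the only bookkeeping is the case distinction between $T_g$ and $\tau_bt_0^{-a}$. This case distinction is also why the exponent is $\beta_b/2$ rather than $\beta_b$: the global mass bound only has the gain $\beta_b$, not $2\beta_b$.
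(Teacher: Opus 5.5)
Your proposal is correct and is exactly the adaptation the paper intends when it says the proof goes as in Proposition~\ref{prop:green}. It combines the Cauchy--Schwarz pairing, the truncation identity, the global localization \eqref{eq:global-level-localization-Lq}, and Lemma~\ref{lem:massbdr} (which needs no compact containment) on dyadic level bands, and your threshold $T=\max\{T_g,\tau_b t_0^{-a}\}$ correctly secures both the validity of the localization and the decay $t_0^{\beta_b/2}$; one small wording point is that the halved exponent comes from the global mass gain being $\beta_b$ rather than $2\beta_b$, not from the case distinction, as your own final clause already indicates.
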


%%%%%%%%%%%%%%%%%%%%%%%%%%%%%%%%%%%%%%%%%%

Throughout Section \ref{sec:global}, we
assume the global structural hypotheses \eqref{glb1}--\eqref{glb4}, and 
write
\[
 a=\frac{n-2}{2},\qquad
 \beta_{\partial}:=\frac{\alpha_b}{1+\alpha_b},
\]
where $\alpha_b$ is the exponent in Lemma~\ref{mubound}.

\medskip
We first upgrade the global $L^1$ estimate for the gradient of the Green's function.  The point is that the
Hessian mass decay with respect to the section height in Lemma~\ref{lem:massbdr}, together with the known global
higher integrability of $D^2u$, yields a positive gain over $L^1$ on every
level band of the Green's function.  We use the global higher-integrability estimate in
\cite[Theorem~10.1]{Le24}; the same estimate is also recorded in the proof of
\cite[Theorem~2.1, Step~1]{Le20} under the present structural assumptions.
Thus, for some
\begin{equation}\label{eq:global-delta0}
 \delta_0=\delta_0(n,\lambda,\Lambda)>0,
 \qquad
 \int_\Omega (\Delta u)^{1+\delta_0}\,\mathrm{d}x
 \le C(n,\lambda,\Lambda,\rho).
\end{equation}
Interpolating \eqref{eq:global-delta0} with Lemma~\ref{lem:massbdr}, we obtain,
for every $0<\delta<\delta_0$ and every section $S_u(\xi,s)$,
\begin{equation}\label{eq:global-hessian-interpolation}
 \int_{S_u(\xi,s)}(\Delta u)^{1+\delta}\,\mathrm{d}x
 \le C_\delta
 s^{(a+\beta_{\partial})(1-\delta/\delta_0)}.
\end{equation}
Indeed, the interpolation parameter $\theta$ determined by
\[
 \frac1{1+\delta}=\theta+\frac{1-\theta}{1+\delta_0}
\]
satisfies $\theta(1+\delta)=1-\delta/\delta_0$, which gives
\eqref{eq:global-hessian-interpolation}.

\begin{proposition}[Global higher integrability of the gradient of the Green's function]
\label{prop:green-global-higher}
There exists
$\varepsilon_b=\varepsilon_b(n,\lambda,\Lambda)>0$
such that the following holds.  If $V\subset\Omega$ is open and proper (including $V=\Omega$) and
$g_V(\cdot,y)$ is the Dirichlet Green function of $L_u$ in $V$, then
\begin{equation}\label{eq:green-global-higher}
 \sup_{y\in V}\int_V|D_xg_V(x,y)|^{1+\varepsilon_b}\,\mathrm{d}x
 \le C(n,\lambda,\Lambda,\rho).
\end{equation}
\end{proposition}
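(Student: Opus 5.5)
The plan is to repeat the level-band argument from the proof of Proposition~\ref{prop:green-higher}. The two interior inputs are replaced by their global counterparts: the localization \eqref{eq:global-level-localization-Lq} instead of \eqref{eq:levels}, and the interpolated Hessian bound \eqref{eq:global-hessian-interpolation} instead of Lemma~\ref{lem:hessian-higher-mass}.

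Fix $y\in V$ and write $g=g_V(\cdot,y)$. For $\varepsilon>0$ small, set $\delta=2\varepsilon/(1-\varepsilon)$, so that $1+\delta=(1+\varepsilon)/(1-\varepsilon)$, and require $\delta<\delta_0$. Applying H\"older's inequality with exponents $2/(1+\varepsilon)$ and $2/(1-\varepsilon)$ to \eqref{eq:global-energy-Lq} gives, for every measurable $E\subset V$,
\[
\int_E|Dg|^{1+\varepsilon}\dd\le C\Big(\int_EU^{ij}D_igD_jg\dd\Big)^{\frac{1+\varepsilon}2}\Big(\int_E(\Delta u)^{1+\delta}\dd\Big)^{\frac{1-\varepsilon}2}.
\]
The estimate then splits into two parts.

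\emph{Sublevel set.} On $\{g\le T_g\}$ the energy identity gives total energy $T_g$. We bound the Hessian factor by the global integral \eqref{eq:global-delta0}, after a H\"older step using $|\Omega|\le C$. This gives a constant depending only on $n,\lambda,\Lambda,\rho$. Since $V\subset\Omega$ and $\overline\Omega\subset S_u(y,M)$ by \eqref{Omsec}, no smallness of $V$ is needed here.

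\emph{Dyadic bands.} For $K_j=2^jT_g$, let $E_j=\{K_j<g\le 2K_j\}$. The energy on $E_j$ is at most $2K_j$. By \eqref{eq:global-level-localization-Lq}, $E_j\subset S_u(y,s_j)$ with $s_j=C_gK_j^{-1/a}$. This section is centered at $y\in\overline\Omega$ and is taken relative to $\Omega$, so \eqref{eq:global-hessian-interpolation} applies with no compact-containment condition. We therefore get
\[
\int_{E_j}|Dg|^{1+\varepsilon}\dd\le C K_j^{\frac{1+\varepsilon}2}\,s_j^{\frac{1-\varepsilon}2(a+\beta_\partial)(1-\delta/\delta_0)}\approx C s_j^{\vartheta_b},
\]
where
\[
\vartheta_b=-\frac{a(1+\varepsilon)}2+\frac{1-\varepsilon}2(a+\beta_\partial)\Big(1-\frac{\delta}{\delta_0}\Big).
\]
At $\varepsilon=0$ this equals $\beta_\partial/2>0$, and it depends continuously on $\varepsilon$ (recall $\delta\to0$ as $\varepsilon\to0$). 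Hence we can choose $\varepsilon_b=\varepsilon_b(n,\lambda,\Lambda)>0$ small so that $\vartheta_b>0$ and $\delta<\delta_0$. Summing the geometric series $\sum_j s_j^{\vartheta_b}\le C\sum_j2^{-j\vartheta_b/a}$ and adding the sublevel term yields \eqref{eq:green-global-higher}, uniformly in $y$ and in the proper set $V$.

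The main obstacle is the exponent bookkeeping. The loss from interpolating \eqref{eq:global-delta0} against Lemma~\ref{lem:massbdr} must be shown to be only $O(\varepsilon)$ in the power of $s$, so that the positive gain $\beta_\partial/2$ from the $L^1$ case survives. A secondary point is the dependence of the constants: $\delta_0$ depends only on $n,\lambda,\Lambda$, and $C_\delta$ in \eqref{eq:global-hessian-interpolation} depends on $\rho$ only through the fixed choice $\delta=\delta(\varepsilon_b)$. This is why $\varepsilon_b$ depends only on $n,\lambda,\Lambda$, while the constant $C$ in \eqref{eq:green-global-higher} may depend on $\rho$.
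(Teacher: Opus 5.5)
Your proposal is correct and follows the paper's proof essentially step by step. It uses the same H\"older pairing with $\delta=2\varepsilon_b/(1-\varepsilon_b)$, the same dyadic bands $E_j$ localized via \eqref{eq:global-level-localization-Lq}, the interpolated bound \eqref{eq:global-hessian-interpolation} on each band, and \eqref{eq:global-delta0} on $\{g\le T_g\}$. The only difference is that the paper makes your continuity argument explicit by computing $\Theta_b=\beta_\partial/2-\varepsilon_b\bigl(a+\beta_\partial/2+(a+\beta_\partial)/\delta_0\bigr)$.
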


\begin{proof} We will use \eqref{eq:global-level-localization-Lq} and \eqref{eq:global-energy-Lq}.
Let $0<\varepsilon_b<1/2$ to be determined with
\begin{equation}\label{eq:global-delta-eps}
 \delta=\frac{2\varepsilon_b}{1-\varepsilon_b}<\delta_0.
\end{equation}
Applying the H\"older inequality exactly as
in \eqref{eq:pair-eps-green-higher} gives for every measurable $E\subset V$
\begin{equation}\label{eq:global-pair-eps}
 \int_E|Dg|^{1+\varepsilon_b}\,\mathrm{d}x
 \le C
 \left(\int_E U^{ij}D_i gD_j g\,\mathrm{d}x\right)^{\frac{1+\varepsilon_b}{2}}
 \left(\int_E(\Delta u)^{1+\delta}\,\mathrm{d}x\right)^{\frac{1-\varepsilon_b}{2}}.
\end{equation}

Let $T_g$ be a fixed structural constant as in Remark \ref{rem:bdrinfo} and set
\[
 K_j=2^jT_g,\qquad
 E_j=\{K_j<g\le2K_j\},\qquad
 s_j=C_gK_j^{-1/a}.
\]
By \eqref{eq:global-level-localization-Lq},
$E_j\subset S_u(y,s_j)$, while \eqref{eq:global-energy-Lq} gives
$\int_{E_j} U^{ik}D_i gD_k g\le2K_j$.  Hence
\eqref{eq:global-hessian-interpolation} and
\eqref{eq:global-pair-eps} yield
\begin{equation}\label{eq:global-band-eps}
 \int_{E_j}|Dg|^{1+\varepsilon_b}\,\mathrm{d}x
 \le C s_j^{\Theta_b},
\end{equation}
where
\begin{align}
 \Theta_b
 =-\frac{a(1+\varepsilon_b)}2
   +\frac{1-\varepsilon_b}{2}(a+\beta_{\partial})
       \left(1-\frac{\delta}{\delta_0}\right)=\frac{\beta_{\partial}}2
   -\varepsilon_b\left(
       a+\frac{\beta_{\partial}}2
       +\frac{a+\beta_{\partial}}{\delta_0}
     \right).\label{eq:Theta-boundary}
\end{align}
Since $\Theta_b\to\beta_{\partial}/2>0$ as
$\varepsilon_b\to0^+$, we may choose
$\varepsilon_b=\varepsilon_b(n,\lambda,\Lambda)>0$ so small that both
\[\delta<\delta_0\quad \text{and}\quad \Theta_b>0.\]  Then
$s_j\approx2^{-j/a}$ and the right-hand side of
\eqref{eq:global-band-eps} is summable in $j$.

For the sublevel set $\{g\le T_g\}$, we use
\eqref{eq:global-energy-Lq}, \eqref{eq:global-delta0}, and
\eqref{eq:global-pair-eps} to obtain a uniform bound
\[\int_{\{g\leq T_g\}} |Dg|^{1+\varepsilon_b}\,\mathrm{d}x \leq C.\]
Summing \eqref{eq:global-band-eps} over $j$ and adding the resulting estimate to the preceding estimate, one obtains
\eqref{eq:green-global-higher} as asserted.  
\end{proof}

\subsection{A global H\"older estimate with  \texorpdfstring{$L^q$}{Lq} vector fields}\label{sec:global-finite-Lq}
We record here a global counterpart of the uniform estimate with $L^q$ vector fields from
Section~\ref{sec:finite-Lq}.  As in that section, the argument uses higher
integrability of $D^2u$, and is therefore logically separate from the
bounded-data proof of Theorem~\ref{thm:main}.

Fix an $\varepsilon_b= \varepsilon_b(n,\lambda,\Lambda)>0$ as in Proposition~\ref{prop:green-global-higher} and define
\begin{equation}\label{eq:qb-global}
 q_b:=\frac{1+\varepsilon_b}{\varepsilon_b}.
\end{equation}
After decreasing $\varepsilon_b$ if necessary, we may and do assume
$q_b>n$.  For $q>q_b$, set
\begin{equation}\label{eq:kappa-global-data}
 \kappa_q:=\frac1{q_b}-\frac1q>0,
 \qquad
 \eta_q:=\frac n2\kappa_q>0.
\end{equation}

\begin{lemma}[Maximum principle with $L^q$ vector fields]
\label{lem:global-Lq-maximum}
Assume $n\ge3$ and that $u$ and $\Omega$ satisfy
\eqref{glb1}--\eqref{glb4}.
Let  $V\subset\Omega$ be open and proper. Let $q>q_b$ and
$\F\in L^q(V;\mathbb R^n)$.  If
$w\in W^{1,2}_0(V)$ solves
\begin{equation}
\label{wVeq}
 L_uw=\diver\F\qquad\text{in }V,\quad w=0\qquad\text{on }\p V,
\end{equation}
then, there exists $C=C(n,\lambda,\Lambda,\rho, q)$ such that
\begin{equation}\label{eq:global-zero-Lq}
 \|w\|_{L^\infty(V)}
 \le C|V|^{\kappa_q}\|\F\|_{L^q(V)}.
\end{equation}
Consequently, if $v\in W^{1,2}(V)\cap C(\overline V)$ solves
$L_uv=\diver\F$ in $V$, then
\begin{equation}\label{eq:global-max-Lq}
 \|v\|_{L^\infty(V)}
 \le \|v\|_{L^\infty(\partial V)}
     +C|V|^{\kappa_q}\|\F\|_{L^q(V)}.
\end{equation}
\end{lemma}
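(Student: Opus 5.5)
The plan is to follow the proof of Proposition~\ref{prop:zero-finite-Lq}, replacing the interior section-scale decay by a volume factor. The input is the uniform global bound of Proposition~\ref{prop:green-global-higher}. First take $\F$ smooth on $\overline V$; after extending by zero and mollifying, we may take it in $C_c^\infty$ near $V$ with $\F_k\to\F$ in $L^q(V)$. Fix $y\in V$. Since $V$ is proper, the representation formula \eqref{Grep} of Remark~\ref{RepRk} applies to $w$, because $L_u w=\diver\F\in L^n(V)$ for smooth $\F$. An integration by parts, justified since $g_V(\cdot,y)\in W^{1,1}_0(V)$ by Proposition~\ref{prop:green-global-higher}, then gives
\[
 w(y)=\int_V D_xg_V(x,y)\cdot\F(x)\dd .
\]

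Next apply H\"older's inequality with exponents $1+\varepsilon_b$ and $q_b=(1+\varepsilon_b)/\varepsilon_b$. Together with Proposition~\ref{prop:green-global-higher} this yields
\[
 |w(y)|\le \|D_xg_V(\cdot,y)\|_{L^{1+\varepsilon_b}(V)}\|\F\|_{L^{q_b}(V)}\le C\|\F\|_{L^{q_b}(V)}.
\]
For $q>q_b$, a second H\"older step gives
\[
 \|\F\|_{L^{q_b}(V)}\le |V|^{1/q_b-1/q}\|\F\|_{L^q(V)}=|V|^{\kappa_q}\|\F\|_{L^q(V)}.
\]
This proves \eqref{eq:global-zero-Lq} for smooth data, with $C$ depending only on $n,\lambda,\Lambda,\rho$; $q$ enters only through $\kappa_q$.

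To pass to general $\F\in L^q(V)$, note that $q>q_b>n\ge 2$, so $\F_k\to\F$ in $L^2(V)$. The solutions $w_k\in W^{1,2}_0(V)$ then converge to $w$ in $W^{1,2}_0(V)$ by the energy estimate. This uses ellipticity of $U$, which is legitimate qualitatively because $u\in C^{1,1}(\overline\Omega)$ and $\det D^2u\ge\lambda$ make $U$ bounded with bounded inverse. A subsequence converges a.e., and the uniform $L^\infty$ bound passes to the limit.

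For \eqref{eq:global-max-Lq}, split $v=w+H$, where $w\in W^{1,2}_0(V)$ solves \eqref{wVeq} and $H=v-w$ satisfies $L_uH=0$ in $V$ with $H-v\in W^{1,2}_0(V)$. The weak maximum principle for the divergence-form operator $L_u$ (uniformly elliptic here, by the qualitative remark above) gives $\sup_V|H|\le\sup_{\partial V}|v|$. Continuity of $v$ on $\overline V$ makes this boundary supremum meaningful: we compare against the constants $\pm\|v\|_{L^\infty(\partial V)}$, and $(H-\|v\|_{L^\infty(\partial V)})^+\in W^{1,2}_0(V)$ follows from $v\in C(\overline V)$. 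Adding the bound on $w$ concludes.

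The main technical point is justifying the representation formula and the integration by parts for a general proper $V$ whose pole may approach $\partial V$. This is handled by the uniform-in-pole bound of Proposition~\ref{prop:green-global-higher} and by \eqref{Grep}, both stated for proper sets.
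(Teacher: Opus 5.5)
Your proposal is correct and follows essentially the same route as the paper: the Green representation $w(y)=\int_V D_xg_V(x,y)\cdot\F(x)\dd$ for smooth $\F$, H\"older's inequality with exponents $1+\varepsilon_b$ and $q_b$ combined with Proposition~\ref{prop:green-global-higher}, a second H\"older step producing $|V|^{\kappa_q}$, approximation through the energy estimate, and the weak maximum principle applied to $v-w$. You also make explicit a point the paper leaves implicit: $u\in C^{1,1}(\overline\Omega)$ together with $\det D^2u\geq\lambda$ makes $U$ qualitatively uniformly elliptic on all of $\Omega$, which is what the approximation and maximum-principle steps need when $V$ reaches $\partial\Omega$.
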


\begin{proof}
As in the proof of Proposition \ref{prop:zero-finite-Lq}, it suffices to prove \eqref{eq:global-zero-Lq}
for smooth vector fields $\F$. For $y\in V$, 
the representation formula \eqref{Grep} can be applied to $w$, 
and integration by parts gives
\[
 w(y)=-\int_V g_V(x,y)\diver\F(x)\dd
      =\int_V D_xg_V(x,y)\cdot\F(x)\dd.\]
Applying the H\"older inequality twice and using Proposition~\ref{prop:green-global-higher}, we find
\[
  \|w\|_{L^\infty(V)}
 \le \sup_{y\in V}\|D_xg_V(\cdot,y)\|_{L^{1+\e_b}(V)}
      \|\F\|_{L^{q_b}(V)} \leq C \|\F\|_{L^{q_b}(V)} \le C|V|^{\frac1{q_b}-\frac1q}\|\F\|_{L^q(V)}.
      \]
For
\eqref{eq:global-max-Lq}, let $w\in W^{1, 2}_0(V)$ be the solution  to \eqref{wVeq}.  Then
$L_u(v-w)=0$ in $V$, so the weak maximum principle gives
\[\|v-w\|_{L^\infty(V)}\le\|v\|_{L^\infty(\partial V)}.\]
This combined with \eqref{eq:global-zero-Lq} proves \eqref{eq:global-max-Lq}.
\end{proof}

Lemma~\ref{lem:global-Lq-maximum} can be applied to $V=\Omega\cap S_u(x, h)$, and in this case,
using the upper bound $|S_u(x,h)|\le C(n,\lambda) h^{n/2}$ for the volume of sections \cite[Lemma 5.8]{Le24},
one has 
\begin{equation}\label{eq:global-section-zero-Lq}
 \|w\|_{L^\infty(\Omega\cap S_u(x, h))}
 \le C h^{\eta_q}\|\F\|_{L^q(S_u(x,h))}.
\end{equation}
Thus, the usual splitting argument gives the following
interior H\"older estimate.
\begin{lemma}[Interior H\"older estimate with $L^q$ vector fields] 
\label{intHLq}
Assume $n\ge3$ and that $u$ and $\Omega$ satisfy
\eqref{glb1}--\eqref{glb4}.
There exists
\begin{equation}\label{eq:global-kappai}
 0<\gamma_i<\frac12\min\{\eta_q,-\log_2\theta\},
\end{equation}
where $\theta\in (0, 1)$ is the constant in
\eqref{eq:homosc}, such that if \[L_u v=\diver \F\quad\text{in}\quad S_u(y,H)\subset\Omega,\] then for
$z_1,z_2\in S_u(y,H/8)$,
\begin{equation}\label{eq:global-scale-interior-Lq}
 |v(z_1)-v(z_2)|
 \le C_\ast\Big(
       \|v-c\|_{L^\infty(S_u(y,H/2))}
       +H^{\eta_q}\|\F\|_{L^q(\Omega)}
      \Big)
 H^{-\gamma_i}M_0^{\gamma_i}|z_1-z_2|^{\gamma_i},
\end{equation}
for every constant $c$, where $C_\ast$ depends only on $n,\lambda,\Lambda, \rho, q$ and, in view of Lemma~\ref{mubound},
\[M_0:=\sup_{x,x'\in\overline\Omega}|Du(x)-Du(x')|\le C=C(n,\lambda,\Lambda, \rho).\] 
\end{lemma}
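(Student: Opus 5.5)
The plan is to rerun the splitting argument of Step 2 in the proof of Theorem~\ref{thm:main}, now centered at points of $S_u(y,H/8)$. The zero-boundary estimate \eqref{eq:global-section-zero-Lq} plays the role of Corollary~\ref{cor:zero}, and an oscillation bound in terms of $\|v-c\|_{L^\infty(S_u(y,H/2))}$ replaces the bound from \eqref{eq:bounded}. Since $L_u(v-c)=L_uv$, we may replace $v$ by $v-c$ throughout, so that $c=0$.

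\textbf{Step 1: geometry.} Using the engulfing and inclusion properties of sections (\cite[Theorem~5.30]{Le24}, in the version valid for sections contained in $\Omega$), we will find a structural $c_0\in(0,1)$ such that for every $x\in S_u(y,H/8)$ and every $0<h\le c_0H$ we have
\[
S_u(x,h)\subset S_u(y,H/2).
\]
We also need the homogeneous oscillation decay \eqref{eq:homosc} for these sections. This is where some care is required. The sections $S_u(x,h)$ lie inside $S_u(y,H)\subset\Omega$, but the constant in \eqref{eq:homosc} was stated for sections compactly contained in $\Omega$. Because $S_u(x,2h)\subset S_u(y,H)\subset\Omega$ after shrinking $c_0$, the Caffarelli--Guti\'errez Harnack inequality applies, and $\theta$ depends only on $n,\lambda,\Lambda$.

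\textbf{Step 2: oscillation recurrence.} Fix $x\in S_u(y,H/8)$ and $h\le c_0H$. On $S=S_u(x,h)$ we split $v=\mathcal H+w$, where $w\in W^{1,2}_0(S)$ solves $L_uw=\diver\F$. By \eqref{eq:global-section-zero-Lq}, $\|w\|_{L^\infty(S)}\le Ch^{\eta_q}\|\F\|_{L^q(\Omega)}$. For $\mathcal H$, the weak maximum principle and \eqref{eq:homosc} apply as in \eqref{osc1}, giving
\[
\osc_{S_u(x,h/2)}v\le\theta\osc_{S_u(x,h)}v+Ch^{\eta_q}\|\F\|_{L^q(\Omega)}.
\]
Since $\gamma_i<\frac12\min\{\eta_q,-\log_2\theta\}$, the geometric iteration of \cite[proof of Theorem~12.13]{Le24} yields
\[
\osc_{S_u(x,h)}v\le C(h/H)^{\gamma_i}\Big(\osc_{S_u(x,c_0H)}v+H^{\eta_q}\|\F\|_{L^q(\Omega)}\Big).
\]
The initial oscillation is at most $2\|v\|_{L^\infty(S_u(y,H/2))}$ by Step 1.

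\textbf{Step 3: conversion to Euclidean distance.} This step is where $M_0$ enters, through the bound
\[
u(z_2)-u(z_1)-Du(z_1)\cdot(z_2-z_1)\le\big(\sup|Du(z_1+t(z_2-z_1))-Du(z_1)|\big)|z_1-z_2|\le M_0|z_1-z_2|.
\]
Hence $z_2\in S_u(z_1,h)$ with $h=2M_0|z_1-z_2|$. There are two cases.

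If $h\le c_0H$, Step 2 gives
\[
|v(z_1)-v(z_2)|\le C\big(\|v\|_{L^\infty(S_u(y,H/2))}+H^{\eta_q}\|\F\|_{L^q}\big)\,H^{-\gamma_i}M_0^{\gamma_i}|z_1-z_2|^{\gamma_i}.
\]
If $h>c_0H$, then $H^{-\gamma_i}M_0^{\gamma_i}|z_1-z_2|^{\gamma_i}\ge c$. In that case the trivial bound $|v(z_1)-v(z_2)|\le 2\|v\|_{L^\infty(S_u(y,H/2))}$, valid because $z_i\in S_u(y,H/8)$, suffices.

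\textbf{Main obstacle.} The main obstacle is the geometric Step 1. The section $S_u(y,H)$ may touch $\partial\Omega$, and we must ensure that the inclusion constants and the Harnack constant $\theta$ remain structural there. I would handle this by invoking the engulfing property valid for all sections under \eqref{glb1}--\eqref{glb4}, together with \eqref{Omsec}.
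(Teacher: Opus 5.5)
Your proposal is correct and follows the paper's route. The paper's one-line proof reruns the oscillation iteration \eqref{eq:finite-Lq-recurrence}--\eqref{eq:finite-Lq-decay} with the global zero-boundary bound \eqref{eq:global-section-zero-Lq} in place of \eqref{eq:zero-finite-Lq}, and then passes to Euclidean distance as in \cite[Theorem~6.2]{GL26}. Your Steps 2--3 carry this out, with $M_0$ entering through $u(z_2)-u(z_1)-Du(z_1)\cdot(z_2-z_1)\le M_0|z_1-z_2|$.

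The boundary worry in your last paragraph is unnecessary. Since $\overline{S_u(y,H/2)}\subset S_u(y,H)\subset\Omega$, the interior inclusion property \cite[Theorem~5.30]{Le24}, applied at a height slightly below $H$, together with the interior Harnack inequality, already covers every section $S_u(x,h)$ with $x\in S_u(y,H/8)$ and $h\le c_0H$; no global engulfing is needed.
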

Indeed, as in the proof of \cite[Theorem~6.2]{GL26}, estimate \eqref{eq:global-scale-interior-Lq} follows
by repeating the oscillation iteration
\eqref{eq:finite-Lq-recurrence}--\eqref{eq:finite-Lq-decay} with
\eqref{eq:global-section-zero-Lq} in place of
\eqref{eq:zero-finite-Lq}.

\medskip
We next record the boundary estimate.  Its proof is the barrier argument of
\cite[Proposition~7.1]{GL26}, with Lemma~\ref{lem:global-Lq-maximum}
replacing the maximum-principle estimate for the inhomogeneous term.

\begin{proposition}[Boundary H\"older estimate]
\label{prop:boundary-finite-Lq}
Assume $n\ge3$ and that $u$ and $\Omega$ satisfy
\eqref{glb1}--\eqref{glb4}.
Let $q>q_b$ and $\sigma\in(0,1]$.  Suppose that
$\F\in L^q(\Omega;\mathbb R^n)$ and
$v\in W^{1,2}(\Omega)\cap C(\overline\Omega)$ satisfies
\[
 L_uv=\diver\F\quad\text{in }\Omega,
 \qquad
 v=\phi\quad\text{on }\partial\Omega,
\]
with $\phi\in C^\sigma(\partial\Omega)$.  Set
\begin{equation}\label{eq:boundary-exponents-Lq}
 \sigma_0:=\min\left\{\sigma,\frac{n\kappa_q}{2}\right\},
 \qquad
 \sigma_1:=\frac{\sigma_0}{\sigma_0+3n}.
\end{equation}
Then there are $r_0,C>0$, depending only on
$n,\lambda,\Lambda,\rho,q$, and $\sigma$, such that
\begin{equation}\label{eq:boundary-holder-Lq}
 |v(x)-v(x_0)|
 \le C\bigl(\|\phi\|_{C^\sigma(\partial\Omega)}
             +\|\F\|_{L^q(\Omega)}\bigr)|x-x_0|^{\sigma_1}
\end{equation}
whenever $x_0\in\partial\Omega$ and
$x\in\Omega\cap B_{r_0}(x_0)$.
\end{proposition}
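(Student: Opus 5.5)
We follow the barrier argument of \cite[Proposition~7.1]{GL26}, splitting $v$ into a part carrying the inhomogeneity and a part carrying the boundary data. Fix $x_0\in\partial\Omega$ and a small radius $r\in(0,r_0)$ to be chosen as a power of $|x-x_0|$. Set $V:=\Omega\cap B_r(x_0)$, which is open and proper. Let $w\in W^{1,2}_0(V)$ solve $L_uw=\diver\F$ in $V$, $w=0$ on $\partial V$. By Lemma~\ref{lem:global-Lq-maximum},
\[
\|w\|_{L^\infty(V)}\le C|V|^{\kappa_q}\|\F\|_{L^q(\Omega)}\le Cr^{n\kappa_q}\|\F\|_{L^q(\Omega)}.
\]
Hence $H:=v-w$ solves $L_uH=0$ in $V$, with $H=\phi$ on $\partial\Omega\cap B_r(x_0)$ and $H=v$ on $\Omega\cap\partial B_r(x_0)$.

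Next we bound $|v|$ globally. Lemma~\ref{lem:global-Lq-maximum} with $V=\Omega$ gives
\[
\|v\|_{L^\infty(\Omega)}\le\|\phi\|_{L^\infty}+C\|\F\|_{L^q(\Omega)}=:K_0,
\]
so $|H|\le 2K_0$ on $\partial V$ after enlarging the constant. For the homogeneous part we use the barrier from \cite{GL26} (see also \cite[Chapter~13]{Le24}). Uniform convexity \eqref{glb2}, quadratic separation \eqref{glb4} and $\det D^2u\le\Lambda$ provide, for each $x_0$, a supersolution of the form
\[
\psi(x)=\phi(x_0)+\|\phi\|_{C^\sigma}r^{\sigma}+2K_0\,\frac{\Phi(x)}{\Phi_{\min}(r)}.
\]
Here $\Phi\ge0$ vanishes at $x_0$, satisfies $L_u\Phi\le0$, and is built from $u-\ell_{x_0}$ together with a linear function in the normal direction, using $U^{ij}D_{ij}u=n\det D^2u\le n\Lambda$. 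It is bounded below on $\Omega\cap\partial B_r(x_0)$ by a power of $r$, namely $r^{3n}$ or similar; the exponent $3n$ in $\sigma_1$ reflects this. Consequently $\psi\ge H$ on $\partial V$, and similarly a subsolution lies below $H$. The comparison principle then yields
\[
|H(x)-\phi(x_0)|\le \|\phi\|_{C^\sigma}r^\sigma+CK_0\,r^{-3n}|x-x_0|.
\]

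Finally we choose the scale. For $x\in V$,
\[
|v(x)-v(x_0)|\le C\big(\|\phi\|_{C^\sigma}+\|\F\|_{L^q}\big)\big(r^{\sigma_0}+r^{-3n}|x-x_0|\big),
\]
using $r^{n\kappa_q}\le r^{\sigma_0}$ since $\sigma_0\le n\kappa_q/2\le n\kappa_q$. Taking $r=|x-x_0|^{1/(\sigma_0+3n)}$ balances the two terms and gives the exponent $\sigma_1=\sigma_0/(\sigma_0+3n)$, which is the claimed estimate. We require $|x-x_0|<r$, which holds for small $r_0$.

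The main obstacle is the barrier step: we must verify precisely the lower bound of $\Phi$ on $\Omega\cap\partial B_r(x_0)$ in terms of $r$. This uses \eqref{glb4} and Savin's boundary localization, and if needed we would follow \cite[Proposition~7.1]{GL26} verbatim, since only the inhomogeneous estimate changes.
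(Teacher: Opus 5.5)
Your overall route matches the paper's. The paper also splits off the zero-boundary solution of $L_u z=\diver\F$ on $\Omega\cap B_{\delta}(x_0)$ and bounds it by Lemma~\ref{lem:global-Lq-maximum} as $C\delta^{n\kappa_q}$. It then controls the remainder with a barrier and balances $\delta^{\sigma_0}$ against $\delta^{-3n}|x-x_0|$. Writing $v=H+w$ instead of comparing $h_\pm$ with $z_A$ is only a reorganization.

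\textbf{The gap: your barrier is a subsolution, not a supersolution.} The barrier is the one step you leave open, and as you describe it, it fails. Any $\Phi$ built from $c(u-\ell_{x_0})$ with $c>0$ and a linear function satisfies $L_u\Phi=c\,U^{ij}D_{ij}u=cn\det D^2u\ge cn\lambda>0$. So it is a strict subsolution, and $\psi$ cannot be used as an upper comparison function.

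What is missing is a concave quadratic that beats $n\det D^2u\le n\Lambda$. Normalize $x_0=0$, $u(0)=0$, $Du(0)=0$, $\Omega\subset\{x_n>0\}$. The paper's barrier (from \cite{GL26}) is
\[
w_\delta(x)=M_\delta x_n+u(x)-\tilde\delta|x'|^2-M_\delta x_n^2,\qquad \tilde\delta=\delta^3/2,\quad M_\delta=\Lambda^n(\lambda\tilde\delta)^{1-n}.
\]
Since $\det U=(\det D^2u)^{n-1}\ge\lambda^{n-1}$, the inequality $\operatorname{trace}(UA)\ge n(\det U\det A)^{1/n}$ with $A=2\,\mathrm{diag}(\tilde\delta,\dots,\tilde\delta,M_\delta)$ gives $L_uw_\delta\le n\Lambda-2n\Lambda<0$. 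The size of $M_\delta$ is dictated by this determinant balance. The linear term $M_\delta x_n$ absorbs $-M_\delta x_n^2$, and the lower bound in \eqref{glb4} absorbs $-\tilde\delta|x'|^2$, so $w_\delta\ge0$ on $\partial\Omega\cap B_\delta$.

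\textbf{Where the exponent $3n$ comes from.} It is not a lower bound $\Phi\gtrsim r^{3n}$ on $\Omega\cap\partial B_r(x_0)$ for a fixed $\Phi$. In the paper, $w_\delta\gtrsim\delta^3$ on $\Omega\cap\partial B_\delta$, so $\frac{6}{\delta^3}w_\delta$ dominates the oscillation of $v$ there. On the other hand, $w_\delta(x)\le M_\delta x_n+u(x)\le C\delta^{-3(n-1)}|x|$. The product $\delta^{-3}\cdot\delta^{-3(n-1)}=\delta^{-3n}$ is the slope at $x_0$ of the normalized, scale-dependent barrier.

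Take
\[
\psi=\phi(x_0)+\|\phi\|_{C^\sigma}r^\sigma+CK_0r^{-3}w_r
\]
with $C$ large, together with the symmetric subsolution. With this barrier, the rest of your argument goes through unchanged and yields $\sigma_1=\sigma_0/(\sigma_0+3n)$.
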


\begin{proof} The proof closely follows the proofs of \cite[Proposition 14.32]{Le24} and \cite[Proposition~7.1]{GL26} using maximum principles and barriers. Since our setting is different, we sketch the details for the reader's convenience. 

\medskip
By \eqref{eq:global-max-Lq} with $V=\Omega$,
\[
 \|v\|_{L^\infty(\Omega)}
 \le C\bigl(\|\phi\|_{C^\sigma(\partial\Omega)}
             +\|\F\|_{L^q(\Omega)}\bigr),\quad C=C(n,\lambda,\Lambda,\rho, q).
\]
After normalization, we may assume that 
\[\|v\|_{L^\infty(\Omega)} +\|\phi\|_{C^\sigma(\partial\Omega)}
             +\|\F\|_{L^q(\Omega)}=1.\]
Fix $x_0\in\partial\Omega$. Without loss of generality, we assume $x_0 = 0$, $u(0) = 0$, $Du(0) = 0$, the tangent hyperplane to $\overline{\Omega}$ at \(x_0=0\) is \(\{x_n=0\}\), and \(\Omega\subset\{x_n>0\}\).

As in the proof of \cite[Proposition~7.1]{GL26}, for any $\varepsilon \in (0, 1)$,  let
	\[
		h_\pm:= v - v(0) \pm \varepsilon \pm \frac{6}{\delta_2^3}w_{\delta_2} \; \quad \text{ in } A:= \Omega\cap B_{\delta_2}(0),
	\]
	where $\delta_2\in (0, 1)$ small is to be chosen later and the function $w_{\delta_2}$ is defined by 
	\[
		w_{\delta_2}(x) = w_{\delta_2}(x', x_n):= M_{\delta_2}x_n + u(x) - \tilde{\delta_2}|x'|^2 - \Lambda^n (\lambda \tilde{\delta_2})^{1-n} x_n^2\quad \text{ for } x=(x', x_n) \in \overline{\Omega},
	\]
	where \[\tilde{\delta_2}:= \delta_2^3/2 \quad\text{and}\quad  M_{\delta_2}:= \Lambda^n(\lambda \tilde{\delta_2})^{1-n}.\] 
If $\delta_2\le\varepsilon^{1/\sigma_0}$, then one can verify that
\[L_uh_+\le\diver\F \quad\text{and} \quad L_uh_-\ge\diver\F \quad\text{in}\quad A;\quad h_- \leq 0\quad \text{ and } h_+ \geq 0\quad \text{ on } \partial A. \] Let $z_A\in W^{1, 2}_0(A)$ solve
\[L_uz_A=\diver\F \quad\text{in}\quad A,\quad z_A=0 \quad \text{ on } \partial A. \]  Applying the weak
maximum principle to $h_+-z_A$ and $h_--z_A$, and then
Lemma~\ref{lem:global-Lq-maximum}, gives
\begin{equation}\label{eq:barrier-error-Lq}
 \max\{-h_+,h_-\}
 \le \|z_A\|_{L^\infty(A)}
 \le C|A|^{\kappa_q}\|\F\|_{L^q(\Omega)}
 \le C\delta_2^{n\kappa_q}.
\end{equation}
The formula for $w_{\delta_2}$ and the boundary estimates of $u$ give (see \cite[inequality (7.3)]{GL26})
\[
 \frac6{\delta_2^3}w_{\delta_2}(x)
 \le C\delta_2^{-3n}|x-x_0|.
\]
Choose $\delta_2=\varepsilon^{1/\sigma_0}$.  Since
$\sigma_0\le n\kappa_q/2$, the last term in
\eqref{eq:barrier-error-Lq} is bounded by $C\varepsilon^2$, hence by
$\varepsilon$ after restricting to sufficiently small $\varepsilon$.
Consequently,
\[
 |v(x)-v(x_0)|
 \le2\varepsilon+C\varepsilon^{-3n/\sigma_0}|x-x_0|.
\]
Taking
$\varepsilon=|x-x_0|^{\sigma_0/(\sigma_0+3n)}$ proves
\eqref{eq:boundary-holder-Lq}.  The proof is complete.
\end{proof}

\begin{proof}[Proof of Theorem \ref{thm:global-finite-Lq}]
Note that our assumptions imply \eqref{glb1}--\eqref{glb4}, where $\rho$ now depends only on $n, \lambda, \Lambda, \|u\|_{C^3(\partial \Omega)}$, $R$, and the $C^3$ regularity of $\partial \Omega$; see \cite[Proposition 4.7]{Le24}. 

\medskip
As in the proof of \cite[Theorem 2.4]{GL26}, we combine Proposition~\ref{prop:boundary-finite-Lq}, the 
interior H\"older estimate \eqref{eq:global-scale-interior-Lq} in Lemma \ref{intHLq}, and the geometry of
maximal interior sections.  We sketch the details for the reader's convenience.

\medskip
For points having at least a fixed positive distance from $\partial\Omega$, the maximal
interior section has a fixed positive height, and
\eqref{eq:global-scale-interior-Lq} gives a uniform interior H\"older
estimate.  

Consider therefore $y\in\Omega$ with
$r=\operatorname{dist}(y,\partial\Omega)$ small.  Let $S_u(y,\bar h)$ be its maximal
interior section and let $y_0\in\partial\Omega\cap\partial S_u(y,\bar h)$.
From Savin's boundary localization theorem \cite[Theorem 3.1]{Sav13}, the maximal interior section $S_u(y,\bar h)$ has the following properties
(see \cite[Proposition~7.2]{GL26}),
\begin{equation}\label{eq:maximal-section-global-Lq}
 c r^2\le\bar h\le Cr^2,
 \qquad
 S_u(y,\bar h)\subset B_{Cr|\log r|}(y),
 \qquad
 B_{cr^2}(y)\subset S_u(y,\bar h/8).
\end{equation}
For small $r$, Proposition~\ref{prop:boundary-finite-Lq} therefore gives
\begin{equation}\label{eq:maximal-v-bound-Lq}
 \|v-v(y_0)\|_{L^\infty(S_u(y,\bar h/2))}
 \le C(r|\log r|)^{\sigma_1}
 \Big(\|\phi\|_{C^\sigma(\partial\Omega)}
       +\|\F\|_{L^q(\Omega)}\Big).\noeqref{eq:maximal-v-bound-Lq}
\end{equation}

In \eqref{eq:global-kappai}, decrease $\gamma_i$ further, if necessary,
so that
\begin{equation}\label{eq:global-gamma-choice}
 \gamma_i<\min\left\{\frac{\sigma_1}{4},\frac{\eta_q}{2}\right\}.
\end{equation}
Applying \eqref{eq:global-scale-interior-Lq} on the maximal section 
$S_u(y,\bar h)$
with
$c=v(y_0)$ and $H=\bar h$, we find
 \[|v(z_1)-v(z_2)|
 \le C\Big[(
       r|\log r|)^{\sigma_1}
 \Big(\|\phi\|_{C^\sigma(\partial\Omega)}
       +\|\F\|_{L^q(\Omega)}\Big)
       +{\bar h}^{\eta_q}\|\F\|_{L^q(\Omega)}
      \Big]
 {\bar h}^{-\gamma_i}M_0^{\gamma_i}|z_1-z_2|^{\gamma_i}.\]
From
\eqref{eq:maximal-section-global-Lq}--\eqref{eq:global-gamma-choice}, we have
\[
 (r|\log r|)^{\sigma_1}\bar h^{-\gamma_i}
 +\bar h^{\eta_q-\gamma_i}
 \le C.
\]
Thus, we can find a constant $C=C(n,\lambda,\Lambda, \rho, q,\sigma)$ such that
\begin{equation}\label{eq:local-near-boundary-global-Lq}
 |v(z_1)-v(z_2)|
 \le C\Big(\|\phi\|_{C^\sigma(\partial\Omega)}
       +\|\F\|_{L^q(\Omega)}\Big)|z_1-z_2|^{\gamma_i}\quad\text{for all } z_1,z_2\in S_u(y,\bar h/8).
\end{equation}

Now, as in Step 3 in the proof of \cite[Theorem 2.4]{GL26}, we obtain 
\eqref{eq:global-finite-Lq-holder} with, for example,
\[
 \gamma=\min\left\{\gamma_i,\frac{\sigma_1}{2}\right\}>0.\]
 Here is the basic idea. If
$x,y\in\overline\Omega$ are sufficiently close and
$|x-y|<c\operatorname{dist}(x,\partial\Omega)^2$ (where we can assume, without loss of generality, that $\operatorname{dist}(x,\partial\Omega)\le\operatorname{dist}(y,\partial\Omega)$), then
\eqref{eq:local-near-boundary-global-Lq} applies.  Otherwise both points
lie within a distance of order $C|x-y|^{1/2}$ from the boundary, and
Proposition~\ref{prop:boundary-finite-Lq}, together with the
$C^\sigma$ modulus of $\phi$, gives
\[
 |v(x)-v(y)|\le C\Big(\|\phi\|_{C^\sigma(\partial\Omega)}
       +\|\F\|_{L^q(\Omega)}\Big)|x-y|^{\sigma_1/2}.
\]
If
$x,y\in\overline\Omega$ are separated by a fixed distance, the global $L^\infty$ estimate
from Lemma~\ref{lem:global-Lq-maximum} suffices. 
The theorem is proved.
\end{proof}

%%%%%%%%%%%%%%%%%%%%%%%%%%%%%%%%%%%%%%%%%%
\section{Applications: semigeostrophic equations and Monge--Amp\`ere homogenization}
\label{sec:applications}

This section presents two applications of our H\"older estimates.
\subsection{Application to the three-dimensional dual semigeostrophic equations}\label{sec:sg}

In this section, we apply Theorem~\ref{thm:main} to the dual
semigeostrophic equations on the three-dimensional torus $\mathbb T^3$.
As introduced above, the dual formulation is
\begin{equation}\label{eq:dual-SG}
 \left\{
 \begin{aligned}
 \partial_t\rho_t(x)+\operatorname{div}(\rho_t(x)U_t(x))&=0,
     &&(t,x)\in(0,\infty)\times\mathbb T^3,\\
 U_t(x)&=J\bigl(x-D P_t^*(x)\bigr),
     &&(t,x)\in(0,\infty)\times\mathbb T^3,\\
 \det D^2P_t^*(x)&=\rho_t(x),
     &&(t,x)\in(0,\infty)\times\mathbb T^3,\\
 P_t^*&\text{ is convex},\\
 \rho_0(x)&=\rho^0(x),
     &&x\in\mathbb T^3,
 \end{aligned}
 \right.
\end{equation}
with the periodicity condition
\begin{equation}\label{eq:period}
 P_t^*(x)-\frac12|x|^2\quad\text{is $\mathbb Z^3$-periodic.}
\end{equation}
Here $J$ is the matrix appearing in \eqref{eq:SG}. Equivalently,
$U_t=(x-D P_t^*)^\perp$, where
$w^\perp=(-w_2,w_1,0)$ for $w=(w_1, w_2, w_3)\in\R^3$. The initial density $\rho^0$ is a probability
density on $\mathbb T^3$.

\medskip

Global weak solutions for semigeostrophic systems in dual variables were
constructed in \cite{BB,CuG,LoN}; see also \cite{ACDF14} for the
three-dimensional theory. When the initial density is H\"older continuous
and uniformly bounded away from zero and infinity, Loeper \cite{Lo06}
proved short-time existence and uniqueness in a more regular class. As in
Subsection \ref{subsec:intro-dual-semi-eq}, our concern here is the time regularity under only the
two-sided density bounds.

\medskip

We now prove Theorem~\ref{thm:holder-dual}. The argument follows
\cite[Section~4]{Le18} (see also \cite[Theorems 15.12 and 15.13]{Le24}). The only substantive change is that the
three-dimensional equation for $\partial_tP_t^*$ is now covered by
Theorem~\ref{thm:main}; the estimates used to control the dual flow are the
three-dimensional analogues from \cite{ACDF14}.

\begin{proof}[Proof of Theorem~\ref{thm:holder-dual}] We use an approximation argument as in \cite{ACDF, Lo05}. That is, extending $\rho_t=\rho_0$ and $U_t=0$ for $t\leq 0$, 
 we first regularize the density $\rho_t$ by $\rho_t^\e=\rho_t * \eta_\e$ with $\eta_\e$ being a nonnegative standard space-time mollifier   and the vector field $U_t$ by 
$U_t^\e= ((\rho_tU_t)*\eta_\e)/\rho_t^\e$. Next, we solve the corresponding periodic Monge--Amp\`ere equations $\det D^2 P_t^{\ast, \e}= \rho_t^\e$ on $\mathbb T^3$.  
We then establish the bounds in $L^{\infty}((0,\infty); C^{\alpha}(\mathbb T^3))$ for $\p_t P_t^{\ast,\e}$ and $\p_t P_t^\e$ with bounds depending only on $\lambda$ and $\Lambda$. Finally, we pass to the limit $\e\to 0$.
Therefore, we can
assume from now on that  $\rho_t, U_t$, $P_t^{\ast}$ and $P_t$ are smooth. 

First, the density bounds are propagated by the dual flow. In particular,
by \cite[Lemma~9.1]{Lo05},
\begin{equation}\label{eq:bd-rho_t}
 \lambda\leq\rho_t\leq\Lambda
 \qquad\text{in }\mathbb T^3
 \qquad \text{for all }t\geq0.
\end{equation}
As in \eqref{eq:parP-intro}, differentiating the Monge--Amp\`ere equation in
time gives
\begin{equation}\label{eq:parP}
 \operatorname{div}\!\left(
   \rho_t(D^2P_t^*)^{-1} D\partial_tP_t^*
 \right)
 =\operatorname{div}F_t,
 \qquad F_t=-\rho_tU_t.
\end{equation}
The periodic transport estimates (see, for example, \cite[Theorem~1]{CE}) imply
\begin{equation}\label{eq:F-bound}
 \|F_t\|_{L^\infty(\mathbb T^3)}\leq C=C(\lambda,\Lambda)\qquad \text{for all } t\geq 0.
\end{equation}

Using the idea of Loeper \cite[Proposition~5.1]{Lo05}, together with the
arguments in \cite[Proposition~3.3]{ACDF} and
\cite[Proposition~1]{ACDF14}, there exist
$\kappa=\kappa(\lambda,\Lambda)>0$ and
$C=C(\lambda,\Lambda)$ such that
\begin{equation}\label{eq:grad-time}
 \int_{\mathbb T^3}|D\partial_t  P_t^*|^{1+\kappa}\,\mathrm{d}x\leq C
 \qquad\text{for all }t\geq0.
\end{equation}
One can also obtain \eqref{eq:bd-rho_t}--\eqref{eq:grad-time} from  \cite[Theorem 15.12]{Le24}.

Recall that we normalize $P_t^*$ to have mean zero. Thus,
\begin{equation}\label{eq:mean-zero}
 \int_{\mathbb T^3}\partial_tP_t^*\,\mathrm{d}x=0\qquad \text{for all } t\geq 0.
\end{equation}
Poincar\'e's inequality and \eqref{eq:grad-time} then give
\begin{equation}\label{eq:Lp-P_t}
 \|\partial_tP_t^*\|_{L^{1+\kappa}(\mathbb T^3)}\leq C \qquad \text{for all } t\geq 0.
\end{equation}

We lift \eqref{eq:parP} periodically to $\mathbb R^3$. The periodic
normalization \eqref{eq:period} implies a uniform bound for
$D P_t^*(x)-x$ on a fundamental cell, and hence fixed-height sections
of $P_t^*$ have uniformly controlled diameter. Therefore the structural
constants in Theorem~\ref{thm:main} can be chosen uniformly in $t$ and in
the center of the section. Applying that theorem with
$p=1+\kappa$, using \eqref{eq:bd-rho_t}, \eqref{eq:F-bound}, and
\eqref{eq:Lp-P_t}, and covering a fundamental cell by finitely many
interior sections, we obtain some
$\gamma=\gamma(\lambda,\Lambda)\in(0,1)$ such that
\begin{equation}\label{eq:holder-Pstar}
 \|\partial_tP_t^*\|_{L^\infty((0,\infty);C^\gamma(\mathbb T^3))}
 \leq C.
\end{equation}

Finally, let $P_t$ be the Legendre transform of $P_t^*$. Differentiating the
Legendre identity at corresponding points gives
\begin{equation}\label{eq:legendre-time}
 \partial_tP_t(x)=-\partial_tP_t^*(D P_t(x)).
\end{equation}
The two-sided bounds \eqref{eq:bd-rho_t} and the periodic
Monge--Amp\`ere regularity theory give a uniform $C^{1,\beta}$ estimate for
$P_t$, for some $\beta=\beta(\lambda,\Lambda)\in(0,1)$. Combining this with
\eqref{eq:holder-Pstar} and \eqref{eq:legendre-time}, we obtain
\[
 |\partial_tP_t(x)-\partial_tP_t(y)|
 \leq C|D P_t(x)-D P_t(y)|^\gamma
 \leq C|x-y|^{\beta\gamma}\qquad \text{for all } t\geq 0.
\]
After decreasing the exponent if necessary, this proves
\eqref{eq:sg-main-intro}.
\end{proof}

%%%%%%%%%%%%%%%%%%%%%%%%%%%%%%%%%%%%%%%%%%

\subsection{Negative-norm stability and periodic homogenization} \label{sec:homogenization}
In this section, we prove Theorems~\ref{thm:MA-stability} and \ref{cor:MA-homogenization}.

Throughout this section, $n\geq2$ and $\Omega\subset\R^n$ is a fixed
bounded smooth uniformly convex domain.  Fix also
$\varphi\in C^3(\partial\Omega)$. For
$f\in L^\infty(\Omega)$ with $\lambda\leq f\leq\Lambda$, let $u_f$
be the convex Aleksandrov solution of
\[
 \det D^2u_f=f\quad\text{in }\Omega,
 \qquad u_f=\varphi\quad\text{on }\partial\Omega.
\]
Existence, uniqueness, and stability of these solutions are recalled
in \cite{Le24}.

\begin{proof}[Proof of Theorem \ref{thm:MA-stability}]
Fix $\F\in L^\infty(\Omega;\R^n)$ with
$\diver\F=f_1-f_0$. 

We show that there exist an exponent $\gamma\in (0, 1)$ and a constant $C>0$, 
depending only on $n,\lambda,\Lambda,\Omega$, and
$\|\varphi\|_{C^3(\partial\Omega)}$, such that
\begin{equation} 
\label{uFga}
\|u_1-u_0\|_{C^\gamma(\overline\Omega)}
 \leq C \|\F\|_{L^\infty(\Omega)}.\end{equation}

\smallskip\noindent
\emph{Step 1: Smooth densities.}
First assume that $f_0,f_1$ are smooth on $\overline\Omega$.

By the global $C^{2,\sigma}(\overline{\Omega})$ solvability of the Monge--Amp\`ere equation \cite{TW08}, the problems
\begin{equation}\label{eq:MA-homotopy}
 \det D^2u_t=(1-t)f_0+tf_1\quad\text{in }\Omega,
 \qquad u_t=\varphi\quad\text{on }\partial\Omega,
 \qquad 0\leq t\leq1,
\end{equation}
admit classical uniformly convex solutions $u_t\in C^{2,\sigma}(\overline{\Omega})$ for all $0<\sigma<1$. Note that the density stays between
$\lambda$ and $\Lambda$ and the boundary datum is the fixed function
$\varphi$.

\medskip

For each fixed $t$, the linearization of the Monge--Amp\`ere operator  at $u_t$
is an isomorphism from 
$C_0^{2,\sigma}(\overline\Omega):=\{w\in C^{2,\sigma}(\overline\Omega): w=0\quad\text{on }\p\Omega\}$ to $C^\sigma(\overline\Omega)$,
where $0<\sigma<1$; see \cite[Chapter~6]{GT01}.
The implicit-function theorem and uniqueness therefore show that
$t\mapsto u_t$ is $C^1$. Differentiating \eqref{eq:MA-homotopy},
we obtain
\[
 L_{u_t}v_t=\diver\F\quad\text{in }\Omega,
 \qquad v_t=0\quad\text{on }\partial\Omega
 \qquad\text{where } v_t=\partial_tu_t.
\]
For $n\geq3$, take $q=q_b+1$. Since
$\F\in L^\infty(\Omega)\subset L^q(\Omega)$,
Theorem~\ref{thm:global-finite-Lq}, applied with zero boundary datum,
gives an exponent $\gamma>0$ and a constant $C>0$, both uniform in $t$
and depending only on $n,\lambda,\Lambda,\Omega$, and
$\|\varphi\|_{C^3(\partial\Omega)}$, such that
\begin{equation}\label{eq:MA-linear-global-holder}
 \|v_t\|_{C^\gamma(\overline\Omega)}
 \leq C\|\F\|_{L^q(\Omega)}
 \leq C\|\F\|_{L^\infty(\Omega)},
 \qquad 0\leq t\leq1.
\end{equation}
In dimension two, the same estimate
follows from \cite[Theorem~1.2]{Le20}.

Since
\[
 u_1-u_0=\int_0^1v_t\,\mathrm{d}t,
\] 
integrating
 \eqref{eq:MA-linear-global-holder} (or its
two-dimensional analogue) establishes \eqref{uFga} for smooth
densities.

\medskip\noindent
\emph{Step 2: Bounded measurable densities.}
We use an approximation that preserves the divergence identity on
the fixed domain. After a translation, we can assume that $B_r(0)\subset\Omega$ for some $r>0$.
For $0<\delta<1/2$, let $a_\delta=1-\delta$ and
$s_\delta=\delta r/4$. The convexity of $\Omega$ then gives
$a_\delta\overline\Omega+B_{\delta r}(0)\subset\Omega$.
For a nonnegative standard mollifier $\eta_{s_\delta}$, define
\[
 f_{i,\delta}(x)
   =(f_i*\eta_{s_\delta})(a_\delta x),
 \qquad
 \F_\delta(x)
   =a_\delta^{-1}(\F*\eta_{s_\delta})(a_\delta x).
\]
The convolutions only sample points of $\Omega$. Hence the
approximations are smooth near $\overline\Omega$ and satisfy
\[
 \lambda\leq f_{i,\delta}\leq\Lambda,
 \qquad
 \diver\F_\delta=f_{1,\delta}-f_{0,\delta},
 \qquad
 \|\F_\delta\|_\infty\leq a_\delta^{-1}\|\F\|_\infty.
\]
Moreover, $f_{i,\delta}\to f_i$ in $L^1(\Omega)$ by continuity of
mollification and dilation in $L^1$.

Let $u_{i,\delta}\in C(\overline{\Omega})$ be the convex solution to
 \[\det D^2 u_{i,\delta}=f_{i,\delta}\quad\text{in }\Omega,\quad
 u_{i,\delta}=\varphi\quad\text{on }\partial\Omega.\]
From the stability of Aleksandrov solutions with fixed boundary data, the weak
continuity of Monge--Amp\`ere measures, and the uniqueness of the Monge--Amp\`ere equation, we find that
$u_{i,\delta}\to u_i$ uniformly on $\overline\Omega$; see \cite[Chapter 3]{Le24}.
Applying \eqref{uFga} to $u_{i,\delta}$ and $\F_\delta$, we have
\begin{equation*} 
\|u_{1,\delta}-u_{0,\delta}\|_{C^\gamma(\overline\Omega)}
 \leq C \|\F_\delta\|_{L^\infty(\Omega)}.\end{equation*}
Letting
$\delta\to0^+$, and using the uniform convergence and lower semicontinuity of the
H\"older norm, we obtain \eqref{uFga}.

Finally, taking the infimum over $\F$  in  \eqref{uFga} completes the proof of \eqref{eq:MA-stability}. The theorem is proved.
\end{proof}

\medskip

We are now ready to establish a linear convergence rate in periodic homogenization for the Monge--Amp\`ere equation.

\begin{proof}[Proof of Theorem~\ref{cor:MA-homogenization}]
Since $f-\bar f$ belongs to $L^\infty(\mathbb T^n)$ and has mean zero, the problem
\[
 \Delta_y\psi=f-\bar f\quad\text{on }\mathbb T^n,
 \qquad \int_{\mathbb T^n}\psi\, \mathrm{d}y=0
\]
has a unique solution $\psi\in W^{2, p}(\mathbb T^n)$ for all $p\in (1,\infty)$ with appropriate estimates;
see \cite[Theorem 9.15 and Lemma 9.17]{GT01}. In particular, for $p=2n$, we have
\[\|\psi\|_{W^{2, 2n}(\mathbb T^n)}\leq C(n)\|f-\bar f\|_{L^\infty(\mathbb T^n)}. \]
Let
\[G:=D_y\psi.\]
Then, the Sobolev embedding (see \cite[Section 7.7]{GT01})
gives
\[
 \diver_yG=f-\bar f\quad\text{on }\mathbb T^n,
 \qquad
 \|G\|_{L^\infty(\mathbb T^n)}
 \leq C(n)\|f-\bar f\|_{L^\infty(\mathbb T^n)}.
\]
Set \[\F_\varepsilon(x):=\varepsilon G(x/\varepsilon).\]
Then
\[
 \diver\F_\varepsilon=f(x/\varepsilon)-\bar f,
 \qquad
 \|\F_\varepsilon\|_{L^\infty(\Omega)}
 \leq C(n)\varepsilon\|f-\bar f\|_{L^\infty(\mathbb T^n)}.
\]
Clearly,
\[\|f(\cdot/\e)-\bar f\|_{\mathrm{div},\infty;\Omega} \leq \|\F_\varepsilon\|_{L^\infty(\Omega)}\leq C(n)\varepsilon\|f-\bar f\|_{L^\infty(\mathbb T^n)}.\]
 Since $\lambda\leq f,\bar f\leq\Lambda$ and both
$u_\varepsilon$ and $\bar u$ have the same boundary value $\varphi$,
Theorem~\ref{thm:MA-stability} proves both
\eqref{eq:hom-intro-uniform} and \eqref{eq:hom-intro-holder}. The theorem is proved.
\end{proof}

\end{document}